\def\thesection{\arabic{section}}
\def\theequation{\thesection.\arabic{equation}}
\def\R{\mathbb{R}}
\newcommand{\noi} {\noindent}
\markboth{\small } {\small Mixed local-nonlocal singular problem}
\def\theequation{\@arabic{\c@section}.\@arabic{\c@equation}}
\newtheorem{Theorem}{Theorem}[section]
\newtheorem{Lemma}[Theorem]{Lemma}
\newtheorem{Remark}[Theorem]{Remark}
\newtheorem{Definition}[Theorem]{Definition}
\begin{document}

{\vspace{0.01in}}

\title{Regularity and existence for semilinear mixed local-nonlocal equations with variable singularities and measure data}

\author{Sanjit Biswas\footnote{Department of Mathematics and Statistics, Indian Institute of Technology Kanpur, Kanpur-208016, Uttar Pradesh, India, Email: sanjitbiswas410@gmail.com } \,\,and Prashanta Garain\footnote{(Corresponsding author) Department of Mathematical Sciences, Indian Institute of Science Education and Research, Berhampur, 760010, Odisha, India, Email: pgarain92@gmail.com}}

\maketitle

\begin{abstract}\noindent
This article proves the existence and regularity of weak solutions for a class of mixed local-nonlocal problems with singular nonlinearities. We examine both the purely singular problem and perturbed singular problems. A central contribution of this work is the inclusion of a variable singular exponent in the context of measure-valued data. Another notable feature is that the source terms in both the purely singular and perturbed components can simultaneously take the form of measures. To the best of our knowledge, this phenomenon is new, even in the case of a constant singular exponent.
\end{abstract}

\maketitle

\noi {Keywords: Mixed local-nonlocal singular equation, existence, regularity, variable exponent, measure data.}

\noi{\textit{2020 Mathematics Subject Classification: 35M10, 35M12, 35J75, 35R06, 35R11, 35B65}

\bigskip

\tableofcontents
\section{Introduction}
In this article, we explore the existence and regularity properties of weak solutions for the following mixed local-nonlocal measure data problem with variable singular exponent
\begin{align}\label{ME}
     \begin{cases}
        &\mathcal{M}u:=-\mathcal{A}u+\mathcal{B}u=\frac{\nu}{u^{\delta(x)}}+\mu \text{ in } \Omega,\\
        &u=0 \text{ in } \mathbb{R}^N\setminus \Omega \text{ and } u>0 \text{ in }\Omega,
    \end{cases}
\end{align}
where $\Omega\subset\mathbb{R}^N,\,N\geq 2$ is a bounded domain with Lipschitz boundary. Here $\mathcal{A}u=\text{div}(A(x)\nabla u)$, where $A:\Omega\to\mathbb{R}^{N^2}$ is a bounded elliptic matrix satisfying
\begin{equation}\label{lkernel}
\alpha|\xi|^2\leq A(x)\xi\cdot\xi,\quad |A(x)|\leq\beta,
\end{equation}
for every $\xi\in\mathbb{R}^N$ and for almost every $x\in\Omega$, for some constants $0<\alpha\leq\beta$. Further, $\mathcal{B}$ represents the nonlocal Laplace operator defined as follows:
$$
\mathcal{B}u=\text{P.V.}\int_{\mathbb{R}^N}(u(x)-u(y))K(x,y)\,dy,
$$
where P.V. denotes the principal value and $K$ is a symmetric kernel in $x$ and $y$ that satisfies
\begin{equation}\label{nkernel}
\frac{\Lambda^{-1}}{|x-y|^{N+2s}}\leq K(x,y)\leq\frac{\Lambda}{|x-y|^{N+2s}}
\end{equation}
for some constant $\Lambda\geq 1$ and $0<s<1$. Notably, if we take $A(x)=I$ and $K(x,y)=|x-y|^{-N-2s}$, the operators $\mathcal{A}$ and $\mathcal{B}$ reduces to the usual Laplace operator $-\Delta$ and the fractional Laplace operator $(-\Delta)^s$ respectively and consequently, the operator $\mathcal{M}$ simplifies to the mixed local-nonlocal Laplace operator $-\Delta+(-\Delta)^s$. Hence, equation \eqref{ME} serves as an extension of the following mixed local-nonlocal singular problem:
\begin{align}\label{ME1}
     \begin{cases}
        &-\Delta u+(-\Delta)^s u=\frac{\nu}{u^\delta}+\mu \text{ in } \Omega,\\
        &u=0 \text{ in } \mathbb{R}^N\setminus \Omega \text{ and } u>0 \text{ in }\Omega.
    \end{cases}
\end{align}

We assume that $\mu$ and $\nu$ are non-negative bounded Radon measures on $\Omega$, with $\nu$ not being identically zero. Further, we assume that $\delta:\overline{\Omega}\to(0,\infty)$ is a continuous function. The positivity of $\delta$ leads to a blow-up of the nonlinearity in \eqref{ME} near the origin, a phenomenon referred to as singularity. Consequently, equation \eqref{ME} encompasses a broad spectrum of mixed singular problems, including both constant and variable exponent singular nonlinearities with measure data.

In the purely local case, the singular Laplace equation
\begin{align}\label{lap}
     \begin{cases}
        &-\Delta u=\frac{f}{u^{\delta(x)}}\text{ in } \Omega,\\
        &u=0 \text{ on } \partial\Omega \text{ and } u>0 \text{ in }\Omega
    \end{cases}
\end{align}
is widely studied for both the constant and variable exponent $\delta$. {When $\delta$ is a positive constant, existence of a unique classical solution is obtained in \cite{CRT} under the assumption that $\partial\Omega$ is of class $C^3$ and $f\in C^1(\overline{\Omega})\setminus\{0\}$ is non-negative. Indeed, authors in \cite{CRT} obtained existence results for more general singularity and more general operator.} For constant $\delta>0$, existence of weak solutions is also obtained in \cite{BocOrs}. Moreover, when $f$ is a non-negative bounded Radon measure on $\Omega$, existence results can be found in \cite{OPdie}. When $\delta$ is a variable, for some positive $f\in L^m(\Omega)$ with $m\geq 1$, existence results are established in \cite{CMvar} in the semilinear case and for the associated quasilinear equations, we refer the reader to \cite{Alvesjde, BGM, Garainmm} and the references therein.

Further, the purturbed singular Laplace equation
\begin{align}\label{purlap}
     \begin{cases}
        &-\Delta u=\frac{f}{u^{\delta(x)}}+g\text{ in } \Omega,\\
        &u=0 \text{ in } \mathbb{R}^N\setminus \Omega \text{ and } u>0 \text{ in }\Omega
    \end{cases}
\end{align}
is also studied. When $\delta$ is a positive constant and both $f,g$ are some non-negative integrable functions, multiplicity of weak solutions is obtained in \cite{Arcoyadie, Arcoyana} and the references therein. In this concern, when $f$ is some non-negative integrable function and $g$ is some non-negative bounded Radon measure, existence results are established in \cite{POesaim} and the references therein. {Further, measure data problems for Laplace equation is studied in \cite{ref2, ref3}.}

In the purely nonlocal case, the singular fractional Laplace equation
\begin{align}\label{flap}
     \begin{cases}
        &(-\Delta)^s u=\frac{f}{u^{\delta(x)}}\text{ in } \Omega,\\
        &u=0 \text{ in } \mathbb{R}^N\setminus \Omega \text{ and } u>0 \text{ in }\Omega
    \end{cases}
\end{align}
is also widely studied. Indeed, when $\delta$ is a positive constant, existence of a unique classical solution is established in \cite{Fang}, provided $0<\delta<1$ and {$f=1$ in a bounded smooth domain $\Omega$}. The case of any $\delta>0$ is tackled in \cite{Sciunzi} to obtain weak solutions, where the nonlinear version of \eqref{flap} is also studied. When $f$ is a non-negative bounded Radon measure, existence results can be found in \cite{Giri} and the references therein. When $\delta$ is a variable, existence and regularity results are obtained in \cite{GMcpaa} for the semilinear and quasilinear cases, provided $f\in L^m(\Omega)\setminus\{0\}$ is non-negative for some $m\geq 1$.

Further, existence results for the purturbed singular fractional Laplace equation
\begin{align}\label{fpurlap}
     \begin{cases}
        &(-\Delta)^s u=\frac{f}{u^{\delta(x)}}+g\text{ in } \Omega,\\
        &u=0 \text{ in } \mathbb{R}^N\setminus \Omega \text{ and } u>0 \text{ in }\Omega
    \end{cases}
\end{align}
is proved in \cite{Giri}, provided $\delta$ is a positive constant, $f$ is some non-negative integrable function and $g$ is some non-negative bounded Radon measure on $\Omega$. For variable $\delta$, we refer to \cite{PKvar} in the semilinear case and in the quasilinear case, see \cite{GMcpaa} and the references therein.

In the recent years, mixed local-nonlocal problems has drawn a great attention due to its wide range of applications in biology, stocastic processes, image processing etc., see \cite{Valdinoci} and the references therein. The non-singular mixed local-nonlocal problem
\begin{align}\label{mpurlap}
     \begin{cases}
        &-\Delta u+(-\Delta)^s u=f\text{ in } \Omega,\\
        &u=0 \text{ in } \mathbb{R}^N\setminus \Omega \text{ and } u>0 \text{ in }\Omega
    \end{cases}
\end{align}
is studied concerning existence, regularity and several other qualitative properties in \cite{Valdinocicpde, Biswas, ref4} in the presence of integrable functions $f$. Further, the nonlinear analogue of equation \eqref{mpurlap} is also studied in \cite{Mingione, GKK, GKtams, GL, ref1} and the references therein. Equation \eqref{mpurlap} is recently studied in the presence of measure $f$ in \cite{Byun, Iwona}.

When $\delta$ is a positive constant, the purely singular mixed local-nonlocal problem
\begin{align}\label{mlap1}
     \begin{cases}
        &-\Delta u+(-\Delta)^s u=\frac{f}{u^{\delta(x)}}\text{ in } \Omega,\\
        &u=0 \text{ in } \mathbb{R}^N\setminus \Omega \text{ and } u>0 \text{ in }\Omega
    \end{cases}
\end{align}
is studied in \cite{Arora, Gjgea, Hichem} concerning existence, regularity and the quasilinear version of \eqref{mlap1} is studied in \cite{Guna}. For variable exponent $\delta$, existence and regularity is obtained in \cite{Biroud, GKK} and the references therein. 

The purturbed mixed {local-nonlocal} problem
\begin{align}\label{mlap2}
     \begin{cases}
        &-\Delta u+(-\Delta)^s u=\frac{f}{u^{\delta(x)}}+g\text{ in } \Omega,\\
        &u=0 \text{ in } \mathbb{R}^N\setminus \Omega \text{ and } u>0 \text{ in }\Omega
    \end{cases}
\end{align}
is also studied in \cite{Gjgea} and the references therein, provided $0<\delta<1$ is a constant. Recently the case $\delta\geq 1$ is also settled in \cite{BalDas}, where the authors studied the quasilinear analogue of \eqref{mlap2} as well.

Very recently, equation \eqref{mlap2} is studied in \cite{Ghosh}, when $\delta$ is a positive constant, $f$ is a positive integrable function and $g$ is a non-negative bounded Radon measure in $\Omega$. To the best of our knowledge, mixed local-nonlocal problems are not understood in the presence of a measure data with variable singular exponent. Our main purpose in this article is to fill this gap. We would like to emphasize that some of our results are valid, even when both $f$ and $g$ are measures. As far as we are aware, such phenomenon is new even in the constant singular exponent case. Further, we remark that the operator $\mathcal{M}$ is more general than the mixed operator $-\Delta+(-\Delta)^s$ and therefore, our main results are valid for the more general mixed equation \eqref{ME}.

To demonstrate our main results, we adopt the approximation approach outlined in \cite{BocOrs, POesaim, OPdie}. Specifically, we establish the existence of solutions to the approximated problem using fixed point arguments. Subsequently, we take the limit, which necessitates several a priori estimates. These estimates are derived by selecting appropriate test functions for the approximated problem.

The structure of this article is as follows: Section 2 presents the functional framework and states the main results. {Sections 3 and 4 are dedicated to the proofs of the existence and regularity results respectively. Finally, in the appendix section 5, approximate problem is studied and a priori estimates of the approximate solutions are established, which are useful for proving our existence and regularity results.}

\textbf{Notations:} For the rest of the paper, unless otherwise mentioned, we will use the following notations and assumptions:
\begin{itemize}
    \item For $k,s\in\R$, we define $T_k(s)=\max\{ -k,\, \min\{s,k\}\}$ and $G_k(s)=(|s|-k)^+sgn(s).$
    \item For a measurable set $A\subset\mathbb{R}^N$, $|A|$ denotes the Lebesgue measure of $A$. Moreover, for a function $u:A\to\R$, we define $u^+:=\max\{ u, 0\}$ and $u^-:=\max\{-u, 0\}.$
{
    \item For $\sigma>1$, we define the conjugate exponent of $\sigma$ by $\sigma'=\frac{\sigma}{\sigma-1}$.
}    
    \item $C$ denotes a positive constant, whose value may change from line to line or even in the same line.
    
    \item For a measurable function $f$ over a measurable set $S$ and given constants $c,d$, we write $c\leq u\leq d$ in $S$ to mean that $c\leq u\leq d$ a.e. in $S$.
    
   \item $\Omega\subset\mathbb{R}^N$ with $N\geq 2$ be a bounded Lipschitz domain.

   \item For open sets $\omega$ and $\Omega$ of $\mathbb{R}^N$, $N\geq 2$ by the notation $\omega\Subset\Omega$, we mean that $\overline{\omega}$ is a compact subset of $\Omega$.
\end{itemize}

\section{Functional setting and main results}
{The Sobolev space $W^{1,p}(\Omega)$ for $1<p<\infty$, is defined to be the space of functions $u:\Omega\to\mathbb{R}$ in $L^p(\Omega)$ such that the partial derivatives $\frac{\partial u}{\partial x_i}$ for $1\leq i\leq N$ exist in the weak sense and belong to $L^p(\Omega)$. The space $W^{1,p}(\Omega)$ is a Banach space (see \cite{LC}) equipped with the norm:
$$
\|u\|_{W^{1,p}(\Omega)} = \|u\|_{L^p(\Omega)} + \|\nabla u\|_{L^p(\Omega)},
$$
where $\nabla u=\Big(\frac{\partial u}{\partial x_1},\ldots,\frac{\partial u}{\partial x_N}\Big)$.  
}The fractional Sobolev space $W^{s,p}(\Omega)$ for $0<s<1<p<\infty$, is defined by
$$
W^{s,p}(\Omega)=\Bigg\{{u:\Omega\to\mathbb{R}:\,}u\in L^p(\Omega),\,\frac{|u(x)-u(y)|}{|x-y|^{\frac{N}{p}+s}}\in L^p(\Omega\times \Omega)\Bigg\}
$$
under the norm
$$
\|u\|_{W^{s,p}(\Omega)}=\left(\int_{\Omega}|u(x)|^p\,dx+\int_{\Omega}\int_{\Omega}\frac{|u(x)-u(y)|^p}{|x-y|^{N+ps}}\,dx\,dy\right)^\frac{1}{p}.
$$
We refer to \cite{Hitchhikersguide} and the references therein for more details on fractional Sobolev spaces. Due to the mixed behavior of our equations, following \cite{Vecchihong, VecchiBO, Vecchihenon}, we consider the space
$$
W_0^{1,p}(\Omega)=\{u\in W^{1,p}(\mathbb{R}^N):u=0\text{ in }\mathbb{R}^N\setminus\Omega\}
$$
under the norm
$$
\|u\|_{W_0^{1,p}(\Omega)}=\left(\int_{\Omega}|\nabla u|^p\,dx+\int_{\mathbb{R}^{N}}\int_{\mathbb{R}^{N}}\frac{|u(x)-u(y)|^p}{|x-y|^{N+ps}}\, dx dy\right)^\frac{1}{p}.
$$
{Using Lemma \ref{locnon1}, we observe that the norm $\|u\|_{W_0^{1,p}(\Omega)}$ defined above is equivalent to the norm $\|u\|=\|\nabla u\|_{L^p(\Omega)}$.} Let $0<s\leq 1<p<\infty$. Then we say that $u\in W^{s,p}_{\mathrm{loc}}(\Omega)$ if $u\in W^{s,p}(K)$ for every $K\Subset\Omega$.\\
{We define Marcinkiewicz space $M^q(\Omega)$ as the set of all measurable functions $u:\Omega\to\R$ such that there exists $C>0$,
$$|\{x\in\Omega: |u(x)|>t\}|\leq \frac{C}{t^q}, \text{ for all } t>0.$$
Note that for a bounded domain $\Omega$, it is enough to have this inequality for all $t\geq t_0$ for some $t_0>0.$ The following embeddings are continuous
\begin{align}\label{maremb}
    L^q(\Omega)\hookrightarrow M^q(\Omega)\hookrightarrow L^{q-\eta}(\Omega),
\end{align}
for any {$\eta\in(0,q-1].$}} For more details, see \cite{JMar} and the references therein.\\
For the next result, we refer to \cite[Proposition 2.2]{Hitchhikersguide}.
\begin{Lemma}\label{l1}
Let $0<s<1<p<\infty$. Then there
exists a positive constant $C=C(N,p,s)$ such that $$
\|u\|_{W^{s,p}(\Omega)}\leq C\|u\|_{W^{1,p}(\Omega)}
$$
for every $u\in W^{1,p}(\Omega)$.
\end{Lemma}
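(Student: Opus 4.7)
The plan is to control the two contributions to $\|u\|_{W^{s,p}(\Omega)}$ separately: the $L^p$-part is immediate from $\|u\|_{L^p(\Omega)}\le\|u\|_{W^{1,p}(\Omega)}$, so the real content is to bound the Gagliardo seminorm
\[
[u]_{s,p,\Omega}^{p}:=\int_{\Omega}\int_{\Omega}\frac{|u(x)-u(y)|^{p}}{|x-y|^{N+ps}}\,dx\,dy
\]
by $\|u\|_{W^{1,p}(\Omega)}^{p}$, up to a constant depending only on $N,p,s$. I would split the double integral along the diagonal using the set $D_{1}=\{(x,y)\in\Omega\times\Omega:|x-y|<1\}$ and its complement $D_{2}=\{(x,y)\in\Omega\times\Omega:|x-y|\ge 1\}$, and handle the two pieces by completely different devices.

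On $D_{2}$, the integrand has no singularity, so the inequality $|u(x)-u(y)|^{p}\le 2^{p-1}(|u(x)|^{p}+|u(y)|^{p})$ together with Fubini and the bound
\[
\int_{\{|z|\ge 1\}}\frac{dz}{|z|^{N+ps}}=\frac{\omega_{N-1}}{ps}
\]
gives $\iint_{D_{2}}\le C(N,p,s)\,\|u\|_{L^{p}(\Omega)}^{p}$. On $D_{1}$, the correct device is to pass through a translation argument, which forces us to work on $\mathbb{R}^{N}$ rather than on $\Omega$. Since $\Omega$ is a bounded Lipschitz domain, by the standard extension theorem there exists $\widetilde u\in W^{1,p}(\mathbb{R}^{N})$ with $\widetilde u=u$ in $\Omega$ and $\|\widetilde u\|_{W^{1,p}(\mathbb{R}^{N})}\le C(N,p,\Omega)\,\|u\|_{W^{1,p}(\Omega)}$. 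By density I may assume $\widetilde u$ is smooth; then for $z\in\mathbb{R}^{N}$ with $|z|<1$,
\[
|\widetilde u(x+z)-\widetilde u(x)|^{p}\le |z|^{p}\int_{0}^{1}|\nabla\widetilde u(x+tz)|^{p}\,dt
\]
by the fundamental theorem of calculus and Jensen's inequality. Changing variables $y=x+z$ and using Fubini,
\[
\iint_{D_{1}}\frac{|u(x)-u(y)|^{p}}{|x-y|^{N+ps}}\,dx\,dy\le \int_{\{|z|<1\}}\frac{dz}{|z|^{N+(s-1)p}}\int_{0}^{1}\int_{\mathbb{R}^{N}}|\nabla\widetilde u(x+tz)|^{p}\,dx\,dt\le C\,\|\nabla\widetilde u\|_{L^{p}(\mathbb{R}^{N})}^{p},
\]
where the $z$-integral is finite exactly because $s<1$ forces $N+(s-1)p<N$. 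Combining the two estimates with the extension bound yields the claimed inequality with $C=C(N,p,s)$ (absorbing the extension constant, which in the Hitchhiker's guide formulation is handled implicitly through the density of $C^{\infty}_{c}(\mathbb{R}^{N})$ arguments).

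The main conceptual obstacle is the small-distance regime: one cannot differentiate $u$ on $\Omega$ and still make sense of $u(x+z)$ for $x$ near $\partial\Omega$, which is why the extension operator (and thus the Lipschitz hypothesis) is essential. Everything else, the large-distance estimate, the switch to smooth functions by density, and the convergence of the radial integral $\int_{0}^{1}r^{p(1-s)-1}\,dr$, is a routine calculation once the split and the translation identity are in place.
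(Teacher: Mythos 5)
Your argument is correct and is essentially the proof behind the cited reference (Proposition 2.2 of the Hitchhiker's guide): split the Gagliardo seminorm at $|x-y|=1$, bound the far range by $\|u\|_{L^p(\Omega)}$ using the elementary $2^{p-1}$-inequality together with the convergence of $\int_{\{|z|\ge 1\}}|z|^{-N-ps}\,dz$, and bound the near range by $\|\nabla u\|_{L^p}$ via the translation estimate $\|\widetilde u(\cdot+z)-\widetilde u\|_{L^p(\mathbb{R}^N)}\le |z|\,\|\nabla\widetilde u\|_{L^p(\mathbb{R}^N)}$ on all of $\mathbb{R}^N$ after extending $u$. The one point worth flagging is that passing through a bounded extension $W^{1,p}(\Omega)\to W^{1,p}(\mathbb{R}^N)$ makes the resulting constant depend on $\Omega$ (through the Lipschitz character of $\partial\Omega$), not only on $(N,p,s)$ as the lemma's statement reads; this is a harmless imprecision already implicit in the cited statement, and it is irrelevant in the present paper since $\Omega$ is fixed throughout.
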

The following result {is taken from \cite[Lemma $2.1$]{Silva}, which follows from Lemma \ref{l1} above.}
\begin{Lemma}\label{locnon1}
Let $0<s<1<p<\infty$. There exists a constant $C=C(N,p,s,\Omega)>0$ such that
\begin{equation}\label{locnonsem}
\int_{\mathbb{R}^N}\int_{\mathbb{R}^N}\frac{|u(x)-u(y)|^p}{|x-y|^{N+ps}}\,dx\,dy\leq C\int_{\Omega}|\nabla u|^p\,dx
\end{equation}
for every $u\in W_0^{1,p}(\Omega)$.
\end{Lemma}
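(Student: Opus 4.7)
The plan is to split the double integral over $\mathbb{R}^N\times\mathbb{R}^N$ into pieces that can be bounded separately, exploiting the fact that $u$ vanishes outside $\Omega$, and then apply Lemma \ref{l1} together with the Poincaré inequality. Because the Gagliardo seminorm on $\Omega$ alone is not enough (the double integral runs over the whole $\mathbb{R}^N\times\mathbb{R}^N$), the key idea is to fatten $\Omega$ a little and use the resulting positive distance from $\Omega$ to the complement of the fattened domain to control the tail.

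Concretely, I would fix a bounded Lipschitz domain $\Omega'$ with $\Omega\Subset\Omega'$, set $d:=\mathrm{dist}(\Omega,\mathbb{R}^N\setminus\Omega')>0$, and split
\begin{equation*}
\mathbb{R}^N\times\mathbb{R}^N=(\Omega'\times\Omega')\cup(\Omega'\times(\mathbb{R}^N\setminus\Omega'))\cup((\mathbb{R}^N\setminus\Omega')\times\Omega')\cup((\mathbb{R}^N\setminus\Omega')\times(\mathbb{R}^N\setminus\Omega')).
\end{equation*}
Since $u=0$ on $\mathbb{R}^N\setminus\Omega$, and in particular on $\mathbb{R}^N\setminus\Omega'$, the last piece contributes $0$. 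On the inner piece $\Omega'\times\Omega'$, I would apply Lemma \ref{l1} on $\Omega'$ to the function $u\in W^{1,p}(\Omega')$ (well-defined by zero extension), obtaining
\begin{equation*}
\int_{\Omega'}\int_{\Omega'}\frac{|u(x)-u(y)|^p}{|x-y|^{N+ps}}\,dx\,dy\leq C(N,p,s)\,\|u\|_{W^{1,p}(\Omega')}^p=C(N,p,s)\,\|u\|_{W^{1,p}(\Omega)}^p,
\end{equation*}
where the last equality uses $u\equiv 0$ on $\Omega'\setminus\Omega$.

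For the two mixed pieces, which are equal by symmetry, the vanishing of $u$ outside $\Omega$ reduces the integrand, so that
\begin{equation*}
\int_{\Omega'}\int_{\mathbb{R}^N\setminus\Omega'}\frac{|u(x)-u(y)|^p}{|x-y|^{N+ps}}\,dy\,dx=\int_{\Omega}|u(x)|^p\left(\int_{\mathbb{R}^N\setminus\Omega'}\frac{dy}{|x-y|^{N+ps}}\right)dx.
\end{equation*}
For $x\in\Omega$ and $y\in\mathbb{R}^N\setminus\Omega'$ we have $|x-y|\geq d$, and a polar-coordinates computation bounds the inner integral by $C(N,p,s)\,d^{-ps}$. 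Hence this piece is controlled by $C(N,p,s,d)\,\|u\|_{L^p(\Omega)}^p$.

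Combining the three pieces yields a bound by $C(N,p,s,\Omega)\,\|u\|_{W^{1,p}(\Omega)}^p$, and a final application of the Poincaré inequality for $u\in W_0^{1,p}(\Omega)$ replaces $\|u\|_{W^{1,p}(\Omega)}$ by $\|\nabla u\|_{L^p(\Omega)}$, giving \eqref{locnonsem}. I don't anticipate any serious obstacle: the only mildly delicate point is handling the tail term, and the trick of introducing the intermediate domain $\Omega'$ is precisely what makes $|x-y|$ bounded away from zero there. The dependence of the constant on $\Omega$ enters through both the choice of $d$ and the Poincaré constant.
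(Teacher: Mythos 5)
Your proof is correct. The paper itself does not supply an argument for this lemma --- it simply cites \cite[Lemma~2.1]{Silva} with the remark that the statement ``follows from Lemma~\ref{l1}'' --- but the route you take (split $\mathbb{R}^N\times\mathbb{R}^N$ into $\Omega'\times\Omega'$, the two mixed regions, and the exterior piece which vanishes because $u\equiv 0$ off $\Omega$; control the inner piece via Lemma~\ref{l1} on the fattened domain $\Omega'$; bound the mixed pieces by the elementary tail estimate $\int_{|z|\ge d}|z|^{-N-ps}\,dz\lesssim d^{-ps}$; finish with Poincar\'e) is precisely the standard derivation of this embedding and is consistent with the paper's indication that it rests on Lemma~\ref{l1}. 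All the steps check: $d=\mathrm{dist}(\Omega,\mathbb{R}^N\setminus\Omega')>0$ since $\overline{\Omega}$ is compact and disjoint from the closed set $\mathbb{R}^N\setminus\Omega'$; the reduction of the mixed integral to $x\in\Omega$ uses $u\equiv 0$ on $\Omega'\setminus\Omega$; the tail integral converges because $ps>0$; and you correctly observe that the final constant picks up dependence on $\Omega$ through both $d$ and the Poincar\'e constant (indeed, the constant in Lemma~\ref{l1} also implicitly depends on the extension domain, a detail the paper's statement of Lemma~\ref{l1} glosses over).
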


For {the subsequent Sobolev embedding, refer to \cite{LC}, for instance}.
\begin{Lemma}\label{emb}
Let $1<p<\infty$. Then the embedding operators
\[
W_0^{1,p}(\Omega)\hookrightarrow
\begin{cases}
L^t(\Omega),&\text{ for }t\in[1,p^{*}],\text{ if }1<p<N,\\
L^t(\Omega),&\text{ for }t\in[1,\infty),\text{ if }p=N,\\
L^\infty(\Omega),&\text{ if }p>N
\end{cases}
\]
are continuous. Moreover, they are compact except for $t=p^*$ if $1<p<N$. Here $p^*=\frac{Np}{N-p}$ if $1<p<N$.
\end{Lemma}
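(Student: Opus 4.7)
The plan is to reduce the claim to the classical Sobolev and Rellich--Kondrachov embedding theorems. The key preliminary observation is that Lemma \ref{locnon1} gives
\[
\int_{\mathbb{R}^N}\int_{\mathbb{R}^N}\frac{|u(x)-u(y)|^p}{|x-y|^{N+ps}}\,dx\,dy \leq C\int_\Omega |\nabla u|^p\,dx,
\]
so that $\|u\|_{W_0^{1,p}(\Omega)}$ is equivalent to the purely local quantity $\|\nabla u\|_{L^p(\Omega)}$, which in turn is a norm on our space by the Poincar\'e inequality. It therefore suffices to prove continuity and compactness with $\|\cdot\|_{W_0^{1,p}(\Omega)}$ replaced by $\|\nabla\cdot\|_{L^p(\Omega)}$, noting that each $u$ in our space is a function in $W^{1,p}(\mathbb{R}^N)$ with support in $\overline{\Omega}$ and hence belongs to the classical trace-zero Sobolev space on $\Omega$.

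Next, I would invoke the three cases of the classical Sobolev embedding. For $1<p<N$, the Gagliardo--Nirenberg--Sobolev inequality yields $\|u\|_{L^{p^*}(\Omega)}\leq C\|\nabla u\|_{L^p(\Omega)}$, and H\"older's inequality together with boundedness of $\Omega$ interpolates this to any $t\in[1,p^*]$. For $p=N$, applying the $1<p<N$ inequality to $|u|^\gamma$ for suitable $\gamma>1$ and iterating gives continuous embedding into $L^t(\Omega)$ for every $t\in[1,\infty)$. For $p>N$, Morrey's inequality provides $u\in C^{0,1-N/p}(\overline{\Omega})$ with $\|u\|_{L^\infty(\Omega)}\leq C\|\nabla u\|_{L^p(\Omega)}$.

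For compactness, I would apply the Rellich--Kondrachov theorem on the bounded Lipschitz domain $\Omega$: the embedding is compact into $L^t(\Omega)$ for $t\in[1,p^*)$ when $1<p<N$ and for $t\in[1,\infty)$ when $p=N$. The borderline case $t=p^*$ is genuinely non-compact, as witnessed by concentrating sequences. When $p>N$, the H\"older bound furnished by Morrey together with Arzel\`a--Ascoli gives compactness in $C^0(\overline{\Omega})$, hence in $L^\infty(\Omega)$.

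The only step that might initially look like an obstacle is the presence of the nonlocal Gagliardo seminorm in the definition of the norm, but Lemma \ref{locnon1} precisely defuses this; the remainder of the argument is entirely classical and amounts to quoting \cite{LC}.
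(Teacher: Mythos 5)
Your proposal is correct and follows exactly the route the paper intends: the paper does not supply a proof but simply refers to \cite{LC}, relying implicitly on the norm equivalence $\|u\|_{W_0^{1,p}(\Omega)}\simeq\|\nabla u\|_{L^p(\Omega)}$ furnished by Lemma \ref{locnon1} (a point the paper makes explicitly just after defining the space) to reduce the mixed norm to the classical local one, after which the Sobolev, Morrey, and Rellich--Kondrachov theorems on a bounded Lipschitz domain give all the continuity and compactness claims. Your write-up is a faithful expansion of that citation.
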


Along the lines of the proof of \cite[Proposition 2.3]{Canino}, the result stated below holds.
\begin{Lemma}\label{Prop2.3}
Let $0<s<1<p<\infty$ and $u\in W^{s,p}_{\mathrm{loc}}(\Omega)\cap L^1(\Omega)$ and $u=0$ a.e. in $\mathbb{R}^N\setminus\Omega$. Then for any $\phi\in C_c^{1}(\Omega)$, we have
$$
\int_{\mathbb{R}^N}\int_{\mathbb{R}^N}{(u(x)-u(y))(\phi(x)-\phi(y))}K(x,y)\,d x dy<\infty.
$$
\end{Lemma}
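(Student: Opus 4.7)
The plan is to decompose the double integral over $\mathbb{R}^N\times\mathbb{R}^N$ according to the support of the test function $\phi$. Let $\omega:=\supp(\phi)$, which is compact in $\Omega$, and fix an auxiliary open set $\omega'$ with $\omega\Subset\omega'\Subset\Omega$. Set $d:=\Dist(\omega,\,\mathbb{R}^N\setminus\omega')>0$. Since $\phi$ vanishes outside $\omega$, the integrand is identically zero on $(\mathbb{R}^N\setminus\omega')\times(\mathbb{R}^N\setminus\omega')$, and on the off-diagonal strips $\omega'\times(\mathbb{R}^N\setminus\omega')$ and $(\mathbb{R}^N\setminus\omega')\times\omega'$ it is supported, after also using $\phi\equiv 0$ outside $\omega$, on a region where one variable lies in $\omega$ and the other in $\mathbb{R}^N\setminus\omega'$, so in particular $|x-y|\geq d$. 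Thus the full integral splits into a local piece over $\omega'\times\omega'$ and two symmetric tail pieces, and it suffices to bound each.

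For the local piece, I would split the singular kernel by writing $|x-y|^{-N-2s}=|x-y|^{-N/p-s}\cdot|x-y|^{-N/p'-s}$ and apply H\"older's inequality with exponents $p$ and $p'$, together with \eqref{nkernel}, to obtain
\begin{equation*}
\int_{\omega'}\!\int_{\omega'}|u(x)-u(y)||\phi(x)-\phi(y)|K(x,y)\,dx\,dy\leq \Lambda\,[u]_{W^{s,p}(\omega')}\,[\phi]_{W^{s,p'}(\omega')}.
\end{equation*}
The first seminorm is finite because $u\in W^{s,p}_{\loc}(\Omega)$ and $\overline{\omega'}\Subset\Omega$; the second is finite because $\phi\in C_c^1(\Omega)\subset W^{1,p'}(\omega')$ and hence $\phi\in W^{s,p'}(\omega')$ by Lemma \ref{l1} applied with exponent $p'$.

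For the tail pieces, by symmetry of $K$ it is enough to bound
\begin{equation*}
\int_{\omega}\!\int_{\mathbb{R}^N\setminus\omega'}|u(x)-u(y)||\phi(x)|K(x,y)\,dy\,dx.
\end{equation*}
On this region $|x-y|\geq d$, so \eqref{nkernel} gives $K(x,y)\leq\Lambda|x-y|^{-N-2s}\leq\Lambda d^{-N-2s}$. Using $\|\phi\|_{L^\infty}<\infty$, the contribution of the $|u(x)|$ term is controlled by $\|\phi\|_\infty\|u\|_{L^1(\omega)}\int_{|z|\geq d}\Lambda|z|^{-N-2s}\,dz$, which is finite; and since $u=0$ outside $\Omega$, the contribution of the $|u(y)|$ term is controlled by $|\omega|\|\phi\|_\infty\Lambda d^{-N-2s}\|u\|_{L^1(\Omega)}$, again finite.

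The only conceptually nontrivial step is the local estimate, where one must observe that the singular kernel $|x-y|^{-N-2s}$ can be split symmetrically to match the Gagliardo seminorms of $u$ in $W^{s,p}$ and of $\phi$ in the dual exponent $W^{s,p'}$; the tail bounds are then routine consequences of the positive separation $d$ and the $L^1$ hypothesis on $u$.
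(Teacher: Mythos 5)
Your proof is correct and follows the standard decomposition-plus-H\"older strategy, which is exactly what the proof of \cite[Proposition 2.3]{Canino} that the paper cites does: write $\mathbb{R}^N\times\mathbb{R}^N$ as the diagonal block $\omega'\times\omega'$, two tail strips where $|x-y|\geq d$, and a block where the integrand vanishes, then split the kernel exponent as $\tfrac{N}{p}+s$ and $\tfrac{N}{p'}+s$ to apply H\"older on the diagonal block and use $\|u\|_{L^1}$ on the tails. One small point worth tightening: when you invoke Lemma \ref{l1} (which the paper states for a Lipschitz domain) to conclude $\phi\in W^{s,p'}(\omega')$, either fix $\omega'$ with Lipschitz boundary, or simply verify directly that the Gagliardo seminorm of $\phi$ is finite from $|\phi(x)-\phi(y)|\leq\min\{\|\nabla\phi\|_{L^\infty}|x-y|,\,2\|\phi\|_{L^\infty}\}$.
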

{The following result from \cite[Theorem $2.1$]{OPdie} will be useful to prove our existence theorems.
\begin{Theorem}\label{ref}
    Suppose $\{f_n\}_{n\in\mathbb{N}}$ be a sequence in $L^1(\Omega)$ such that $f_n\rightharpoonup f$ weakly in $L^1(\Omega)$ and $\{g_n\}_{n\in\mathbb{N}}$ be a sequence in $L^\infty(\Omega)$ such that $g_n$ converges to g in a.e. in $\Omega$ and $\mathrm{weak^*}$ in $L^\infty(\Omega).$ Then $$\lim_{n\to\infty}\int_\Omega f_ng_n\, dx=\int_\Omega fg\, dx.$$
\end{Theorem}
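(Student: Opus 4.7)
The plan is to start from the standard decomposition
\begin{equation*}
\int_\Omega f_n g_n \, dx - \int_\Omega f g \, dx = \int_\Omega (f_n - f)\, g \, dx + \int_\Omega f_n\, (g_n - g)\, dx.
\end{equation*}
Since $g\in L^\infty(\Omega)$ lies in the dual of $L^1(\Omega)$, the weak convergence $f_n \rightharpoonup f$ in $L^1(\Omega)$ gives $\int_\Omega (f_n - f)\, g\, dx \to 0$ as $n\to\infty$ directly from the definition of weak convergence. All the difficulty therefore concentrates on the second integral, where I must combine two different modes of convergence: pointwise a.e.\ for $\{g_n\}$ and weak $L^1$ for $\{f_n\}$.

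To handle that term I would invoke two classical tools. First, the weak convergence in $L^1$ together with the Dunford--Pettis theorem yields equi-integrability of $\{f_n\}$: for every $\e>0$ there exists $\de>0$ such that $\int_E |f_n|\,dx < \e$ for all $n$ whenever $|E|<\de$. By the Banach--Steinhaus principle one also has $C_0:=\sup_n \|f_n\|_{L^1(\Omega)}<\infty$. Second, the weak-$\ast$ hypothesis (or equivalently the a.e.\ limit combined with the uniform boundedness that it entails) forces $\|g_n\|_{L^\infty(\Omega)}\leq M$ and, by a.e.\ convergence, $\|g\|_{L^\infty(\Omega)}\leq M$; since $\Om$ has finite measure and $g_n \to g$ a.e., Egorov's theorem produces, for each $\de>0$, a measurable set $A_\de\subset\Om$ with $|\Om\setminus A_\de|<\de$ on which $g_n \to g$ uniformly.

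With these in hand the execution is straightforward. Fix $\e>0$, pick the corresponding $\de$ from equi-integrability, and let $A_\de$ be an Egorov set for this $\de$. Splitting the problematic integral according to $A_\de$ and $\Om\setminus A_\de$ and using $|g_n-g|\le 2M$ gives
\begin{equation*}
\left|\int_\Om f_n(g_n - g)\, dx\right| \,\le\, \|g_n - g\|_{L^\infty(A_\de)}\, C_0 \,+\, 2M \int_{\Om\setminus A_\de}|f_n|\, dx \,\le\, \|g_n - g\|_{L^\infty(A_\de)}\, C_0 + 2M\e.
\end{equation*}
Sending $n\to\infty$, the first summand vanishes by uniform convergence on $A_\de$, leaving $\limsup_n \le 2M\e$; then $\e\to 0$ closes the argument.

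The one place requiring care, and which I would flag as the main obstacle, is precisely the interplay just described: one cannot pass to the limit in either factor in isolation, since neither $f_n$ nor $g_n$ is assumed strongly convergent. Egorov's theorem is the crucial bridge, upgrading the a.e.\ convergence of $\{g_n\}$ (on a finite-measure domain) to uniform convergence off a set of arbitrarily small measure, while the Dunford--Pettis equi-integrability of $\{f_n\}$ guarantees that this small exceptional set contributes arbitrarily little to the integral, compensating for the fact that the $f_n$'s are only weakly controlled.
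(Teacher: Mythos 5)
Your proof is correct. The paper does not prove this statement itself but cites it from \cite[Theorem 2.1]{OPdie}; your argument, decomposing the difference and combining Egorov's theorem with the Dunford--Pettis equi-integrability of $\{f_n\}$ and the uniform $L^\infty$ bound on $\{g_n\}$ supplied by Banach--Steinhaus, is the standard proof of this well-known lemma and matches the cited source's approach.
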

}
Next, we mention some preliminary results related to measures (see \cite{GOB, MD, OPdie}). {We define $M(\Omega)$ as a set of all signed Radon measures on $\Omega$ with bounded total variation (as usual, identified with a linear map $u\to \int_\Omega u\, d\mu$ on $C(\overline{\Omega})$). If $\nu\in M(\Omega)$ is a non-negative Radon measure then by the Lebesgue's decomposition theorem \cite[page 384]{Royden}, we have $$\nu=\nu_a+\nu_s,$$  where $\nu_a$ is absolutely continuous with respect to the Lebesgue measure and $\nu_s$ is singular with respect to the Lebesgue measure. By the Radon-Nikodym theorem \cite[page 382]{Royden}, there exists a non-negative Lebesgue measurable function $f$ such that for every measurable set $E\subset\Omega$,
$$\nu_a(E)=\int_E f\, dx.$$ Furthermore, if $\nu$ is bounded then $f\in L^1(\Omega)$. If the function $f$ is not identically zero function, then we say that $\nu$ is {non-singular} with respect to the Lebesgue measure, otherwise it is called {purely} singular measure.

Let us now review the definition of $p$-capacity, which will help us to characterize the data in our problem (see \cite{Heinonen}).} Suppose $p>1$, then for a compact set $K\subset\Omega$, the $p$-capacity of $K$ is denoted by $\mathrm{cap_{\text{$p$}}(K)}$ and defined as
$$\mathrm{cap\text{$_p$}(K)}:=\inf \left \{ \int_\Omega|\nabla\phi|^p dx : \phi\in C^\infty_0(\Omega), \phi\geq \chi_K \right \},$$
where $$\chi_K(x):=\begin{cases}
    1\text{ if } x\in K,\\
    0 \text{ otherwise.}
\end{cases}$$
For an open set $U\subset\Omega$, the $p$-capacity is defined by
$$\mathrm{cap_{\text{$p$}}(U)}:=\sup\left\{\mathrm{cap_{\text{$p$}}(K)}: K\subset U \text{ is compact}\right\}.$$
Finally, the $p$-capacity of any subset $B$ of $\Omega$ is defined by
$$\mathrm{cap_{\text{$p$}}(B)}:=\inf\left \{\mathrm{cap_{\text{$p$}}(U)}: U \text{ is an open set in } \Omega \text{ containing } B\right \}.$$
We say a measure $\nu\in M(\Omega)$ is absolutely continuous with respect to $p$-capacity if the following holds: \textit{$\nu(E)=0$ for every $E\subset\Omega$ such that $\mathrm{cap_\text{$p$}(E)}=0$.}
We define 
$$
M^p_0(\Omega)=\{\nu\in M(\Omega):\nu \text{ is absolutely continuous with respect to } p \text{-capacity}\}.
$$
One can observe that if $1<p_1<p_2$, then $M^{p_1}_0(\Omega)\subset M^{p_2}_0(\Omega).$

The following characterization from \cite[Theorem 2.1]{GOB} is very useful for us.
\begin{Theorem}\label{dec1}
    Let $1<p<\infty$ and $\nu\in M^p_0(\Omega)$. Then there exists $f\in L^1(\Omega)$ and $G\in (L^{p'}(\Omega))^N$ such that $\nu=f-\mathrm{div}(G)\in L^1(\Omega)+W^{-1,p'}(\Omega)$ {in $\mathcal{D}'(\Omega)$ (space of distributions).} Furthermore, if $\nu$ is non-negative then $f$ is non-negative.
\end{Theorem}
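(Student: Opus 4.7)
This is the classical decomposition theorem of Boccardo-Gallou\"et-Orsina, and I would follow its standard proof via approximation combined with the solution of a coercive auxiliary problem. First, using the Hahn-Jordan decomposition $\nu=\nu^+-\nu^-$ (with both $\nu^{\pm}\in M_0^p(\Omega)$, since this class is a vector lattice), it suffices to treat the case $\nu\geq 0$ and produce a non-negative $f$; the sign claim and the general signed case then follow by subtraction. Next, I would regularize by mollification, setting $\nu_n=\rho_n*\nu\in C^\infty(\Omega)\cap L^\infty(\Omega)$, which satisfies $\nu_n\geq 0$, $\nu_n\to\nu$ narrowly in $M(\Omega)$, and $\|\nu_n\|_{L^1(\Omega)}\leq\nu(\Omega)$. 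Since $\nu_n\in L^\infty(\Omega)\subset W^{-1,p'}(\Omega)$, standard monotone operator theory gives a unique non-negative $u_n\in W_0^{1,p}(\Omega)\cap L^\infty(\Omega)$ solving
\begin{equation*}
-\Delta_p u_n + u_n^{p-1} = \nu_n \text{ in }\Omega,\qquad u_n=0 \text{ on }\partial\Omega,
\end{equation*}
which yields the approximate identity $\nu_n=u_n^{p-1}-\operatorname{div}(|\nabla u_n|^{p-2}\nabla u_n)$.

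The main obstacle is obtaining uniform a priori bounds $\|u_n\|_{W_0^{1,p}(\Omega)}+\|u_n^{p-1}\|_{L^1(\Omega)}\leq C$ that allow passing to the limit, and this is precisely where the hypothesis $\nu\in M_0^p(\Omega)$ plays the essential role. The quasi-continuous representative $\tilde u_n$ of $u_n\in W_0^{1,p}(\Omega)$ is well-defined and finite $\nu$-a.e.\ because $\nu$ charges no set of zero $p$-capacity. Testing the equation with $u_n$ and interpreting the right-hand side at the capacitary level gives
\begin{equation*}
\int_\Omega|\nabla u_n|^p\,dx+\int_\Omega u_n^p\,dx=\int_\Omega \tilde u_n\,d\nu_n\leq\int_\Omega \tilde u_n\,d\nu,
\end{equation*}
and a capacitary-Sobolev estimate combined with Young's inequality allows the right-hand side to be absorbed into the left, yielding the required bound. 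Without the capacity hypothesis, the right-hand side would concentrate on polar sets of $\nu$ and this absorption would break down --- one would only obtain weaker Boccardo-Gallou\"et-type Marcinkiewicz estimates, which do not suffice to place $G\in L^{p'}$ and $f\in L^1$.

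With uniform bounds in hand, I would extract a subsequence $u_n\rightharpoonup u$ weakly in $W_0^{1,p}(\Omega)$ and apply Minty's monotonicity trick for the $p$-Laplacian to obtain a.e.\ convergence of gradients, hence weak convergence of $|\nabla u_n|^{p-2}\nabla u_n$ to $|\nabla u|^{p-2}\nabla u$ in $(L^{p'}(\Omega))^N$. Setting $G:=-|\nabla u|^{p-2}\nabla u\in(L^{p'}(\Omega))^N$ and $f:=u^{p-1}\in L^{p'}(\Omega)\subset L^1(\Omega)$ (using $|\Omega|<\infty$), one passes to the limit in the approximate identity to arrive at $\nu=f-\operatorname{div}(G)$ in $\mathcal{D}'(\Omega)$. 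The non-negativity of $f$ whenever $\nu\geq 0$ is immediate from $u\geq 0$, which holds by the weak maximum principle for the coercive approximating problem with non-negative data.
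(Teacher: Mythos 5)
Your approach is not the one behind the cited result, and it contains a gap that is not repairable: the a priori estimate step fails. You claim that testing $-\Delta_p u_n+u_n^{p-1}=\nu_n$ with $u_n$ and ``interpreting the right-hand side at the capacitary level'' yields a uniform bound $\|u_n\|_{W_0^{1,p}(\Omega)}\le C$. This cannot hold for a general $\nu\in M_0^p(\Omega)$. Absolute continuity with respect to $p$-capacity is a purely qualitative condition (the measure does not charge capacity-null sets); it carries no quantitative estimate of the form $\int_\Omega \tilde u\,d\nu\le C\|u\|_{W_0^{1,p}(\Omega)}^{\theta}$ with $\theta<p$, which is what your absorption argument would require. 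The simplest counterexample is already a function: take $p=2$, $N=3$, $\nu=f\in L^1(\Omega)\setminus L^{6/5}(\Omega)$, say $f(x)=|x|^{-5/2}$ near the origin. Then $\nu\in M_0^2(\Omega)$ trivially, yet the solutions $u_n$ of $-\Delta u_n+u_n=T_n(f)$ satisfy $\|u_n\|_{H_0^1(\Omega)}\approx\|T_n(f)\|_{L^{6/5}(\Omega)}\to\infty$. More structurally, your scheme, if it worked, would prove that every $\nu\in M_0^p(\Omega)$ lies in $L^{p^*/(p-1)}(\Omega)+W^{-1,p'}(\Omega)\subset W^{-1,p'}(\Omega)$, i.e.\ $M_0^p(\Omega)\subset W^{-1,p'}(\Omega)$. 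This is strictly stronger than the theorem (which only asserts $f\in L^1$, not $f\in L^{p^*/(p-1)}$) and is false, since $L^1(\Omega)\not\subset W^{-1,p'}(\Omega)$. A second, more minor, issue: the inequality $\int_\Omega\tilde u_n\,d\nu_n\le\int_\Omega\tilde u_n\,d\nu$ is not justified; mollification gives $\int u_n\,\nu_n\,dx=\int(\rho_n*u_n)\,d\nu$, which is not the same quantity and carries no monotone comparison.

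For context: the paper does not prove this theorem but imports it as \cite[Theorem 2.1]{GOB}. The proof there is genuinely different and does not pass through a coercive PDE. The key structural input is a factorization result from potential theory: every non-negative bounded measure $\nu$ that is absolutely continuous with respect to $p$-capacity can be written as $\nu=h\,\gamma$, where $\gamma$ is a non-negative measure belonging to $W^{-1,p'}(\Omega)$ and $h\ge 0$ is a $\gamma$-measurable function with $h\in L^1(\Omega,d\gamma)$. One then truncates, $\nu_k:=T_k(h)\gamma$, so that $\nu_k\le k\gamma$ lies in $W^{-1,p'}(\Omega)$, while $\nu-\nu_k$ is a non-negative measure whose total variation tends to $0$; a telescoping/limit argument built on this decomposition produces the splitting $\nu=f-\operatorname{div}G$ with $f\in L^1(\Omega)$, $G\in(L^{p'}(\Omega))^N$, and preserves non-negativity of $f$. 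The capacity hypothesis enters precisely through the existence of the factorization, not through any coercivity or energy estimate. Your approach would need to be abandoned rather than patched.
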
}
Next, we define the notion of weak solutions of the problem \eqref{ME}.
\begin{Definition}\label{def1}
    Let $0<s<1<q<\infty$ and $\delta:\overline{\Omega}\to(0,\infty)$ be a continuous function. {Suppose that $\mu$ and $\nu$ are two non-negative bounded Radon measures on $\Omega$ such that $\nu\in M^q_0(\Omega)$. We say that $u\in W^{1,q}_{loc}(\Omega)\cap L^1(\Omega)$ is a weak solution of the equation (\ref{ME}) if $u=0$ in $\mathbb{R}^N\setminus\Omega$ and 
    \begin{enumerate}
        \item[(a)] for every $\omega\Subset\Omega$, there exists a constant $C(\omega)>0$ such that $u\geq C(\omega)>0$ in $\omega$ and
        
        \item[(b)] for every $\phi\in C^{\infty}_c(\Omega)$, we have
    \begin{align}\label{SC}
          \int_\Omega A(x)\nabla u\cdot\nabla\phi\, dx+\int_{\mathbb{R}^N}\int_{\mathbb{R}^N}&{(u(x)-u(y))(\phi(x)-\phi(y))}K(x,y)\, dx dy\nonumber\\
          &=\int_\Omega\frac{\phi}{u^{\delta(x)}}\, d\nu+\int_\Omega \phi\,  d\mu.
    \end{align}
     \end{enumerate}
    }
\end{Definition}

\begin{Remark}
We remark that Definition \ref{def1} is well stated. More precisely, if $\phi\in C^{\infty}_c(\Omega)$ and $K=\mathrm{supp}\,\phi$, since $|A(x)|\leq \beta$, we have
$$
\left|\int_\Omega A(x)\nabla u\cdot\nabla\phi\,dx\right|\leq \beta\|\phi\|_{L^\infty(\Omega)}\|{\nabla} u\|_{L^1(K)}<\infty.
$$
Moreover, combining Lemma \ref{l1} and Lemma \ref{Prop2.3}, it follows that
$$
\left|\int_{\mathbb{R}^N}\int_{\mathbb{R}^N}{(u(x)-u(y))(\phi(x)-\phi(y))}K(x,y)\,d x dy\right|<\infty.
$$
{Since $\phi\in C^\infty_c(\Omega)$ and $\mu$ is a non-negative Radon measure, therefore 
$$\int_\Omega \phi d\mu<\infty.$$ Furthermore, as in \cite[Remark 3.2]{OPdie}, since $\nu\in M^q_0(\Omega)$, so $\nu\in L^1(\Omega)+W^{-1,q'}(\Omega)$ (see Lemma \ref{dec1}) and due to the above property (a) along with that $\phi\in C^{\infty}_c(\Omega),$ we have $\frac{\phi}{u^{\delta(x)}}\in W^{1,q}_0(\Omega)\cap L^\infty(\Omega).$ By keeping this fact in mind, with a little abuse of notation we denote
$$\int_\Omega\frac{\phi}{u^{\delta(x)}}d\nu:=\Big\langle\nu, \frac{\phi}{u^{\delta(x)}}\Big\rangle_{L^1(\Omega)+W^{-1,q'}(\Omega), W^{1,q}_0(\Omega)\cap L^\infty(\Omega)}.$$}
\end{Remark}

The following approximation result is very useful for our justification \cite{GOB, PM, OPdie}.
\begin{Lemma}\label{aprox}
Let $\nu=f-\mathrm{div}(G)$ be a non-negative bounded Radon measure in $M^p_0(\Omega)$ {for some $1<p<\infty$}, where $f\in L^1(\Omega)$ and $G\in (L^{p'}(\Omega))^N.$ Then there exists a sequence of non-negative functions {$\{\nu_n\}_{n\in\mathbb{N}}\in L^2(\Omega)$} in $\Omega$ such that 
\begin{enumerate}
    \item $\nu_n=f_n-\mathrm{div}(G_n)$ in $D'(\Omega)$ and
    \item {$\{\nu_n\}_{n\in\mathbb{N}}$} is uniformly bounded in $L^1(\Omega),$
\end{enumerate}
where $f_n\in L^2(\Omega)$ such that $f_n\rightharpoonup f$ {weakly} in $L^1(\Omega)$ and $G_n\to G$ {strongly} in $(L^{p'}(\Omega))^N.$
\end{Lemma}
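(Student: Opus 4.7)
The plan is to use a standard mollification. Fix a non-negative mollifier $\rho\in C_c^\infty(\mathbb{R}^N)$ with $\int\rho\,dx=1$ and set $\rho_n(x)=n^N\rho(nx)$. After extending $f$ and $G$ by zero to all of $\mathbb{R}^N$, define
$$f_n:=f*\rho_n,\qquad G_n:=G*\rho_n,\qquad \nu_n:=f_n-\mathrm{div}(G_n).$$
Since convolution commutes with distributional differentiation, $\nu_n=(f-\mathrm{div}(G))*\rho_n=\nu*\rho_n$ in $\mathcal{D}'(\Omega)$, which establishes item (1) with the stated $f_n$ and $G_n$. The regularity $\nu_n\in L^2(\Omega)$ follows because $f_n\in L^\infty$ (convolution of $L^1$ with the bounded kernel $\rho_n$), while $G_n\in C^\infty$ (convolution with a smooth compactly supported kernel), so both $f_n$ and $\mathrm{div}(G_n)$ lie in $L^2(\Omega)$ thanks to the boundedness of $\Omega$; in particular $f_n\in L^2(\Omega)$ as required.

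Next I would establish non-negativity and the $L^1$ bound by exploiting the measure structure. Although the distribution $\nu$ is decomposed into pieces $f$ and $-\mathrm{div}(G)$ that need not be individually non-negative, $\nu$ is non-negative as a measure, so the convolution admits the pointwise representation
$$\nu_n(x)=(\nu*\rho_n)(x)=\int_{\mathbb{R}^N}\rho_n(x-y)\,d\nu(y)\geq 0,$$
since $\rho_n\geq 0$. A Fubini computation then yields the uniform $L^1$ estimate
$$\int_\Omega \nu_n\,dx\leq\int_{\mathbb{R}^N}\Big(\int_{\mathbb{R}^N}\rho_n(x-y)\,dx\Big)d\nu(y)=\nu(\mathbb{R}^N)=\nu(\Omega)<\infty,$$
which gives item (2).

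Finally, for the convergence of the building blocks, $f\in L^1(\mathbb{R}^N)$ implies $f_n\to f$ strongly in $L^1(\mathbb{R}^N)$ by standard properties of mollifiers, so in particular $f_n\rightharpoonup f$ weakly in $L^1(\Omega)$; and since $p>1$ yields $p'<\infty$, the analogous strong convergence $G_n\to G$ in $(L^{p'}(\Omega))^N$ also holds. The main subtlety in the argument is the non-negativity of $\nu_n$: the pieces $f$ and $-\mathrm{div}(G)$ produced by Theorem \ref{dec1} cannot be shown to be non-negative termwise, so one cannot extract the sign of $\nu_n$ from the identity $\nu_n=f_n-\mathrm{div}(G_n)$; the measure-theoretic representation $\nu_n=\nu*\rho_n$ is what makes the sign explicit. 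Behaviour near $\partial\Omega$ causes no difficulty once $f$ and $G$ are extended by zero outside $\Omega$.
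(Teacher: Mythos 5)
Your mollification strategy has a genuine gap at its central step: the identity $\nu_n=\nu*\rho_n$ is false, and it is exactly this false identity that you use to derive non-negativity and the uniform $L^1$ bound. Write $\tilde f,\tilde G$ for the zero extensions. The object your construction actually produces is $\nu_n=(\tilde f-\mathrm{div}\,\tilde G)*\rho_n$, and the distribution $\tilde f-\mathrm{div}\,\tilde G$ on $\mathbb R^N$ is \emph{not} the zero extension of the measure $\nu$. They agree against test functions supported in $\Omega$, but for $\phi\in C_c^\infty(\mathbb R^N)$ whose support meets $\partial\Omega$ the pairing is $\int_\Omega f\phi+\int_\Omega G\cdot\nabla\phi$, and the second integral cannot be transferred onto $-\mathrm{div}(G)$ because $\phi$ need not vanish on $\partial\Omega$. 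Informally, $\mathrm{div}(\tilde G)$ carries a singular layer of the type $G\cdot n\,\mathcal H^{N-1}\lfloor\partial\Omega$, which has no sign. A one-dimensional example makes the failure concrete: on $\Omega=(0,1)$ take $G\equiv 1$ and $f\equiv 1$, so $\nu=dx$ is non-negative. Then $\tilde G=\chi_{(0,1)}$, $\mathrm{div}(\tilde G)*\rho_n=\rho_n(x)-\rho_n(x-1)$, and
\begin{equation*}
\nu_n(x)=\chi_{(0,1)}*\rho_n(x)-\rho_n(x)+\rho_n(x-1),
\end{equation*}
which at $x=\tfrac{1}{4n}\in\Omega$ behaves like $O(1)-n\,\rho(\tfrac14)\to-\infty$. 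So $\nu_n$ is negative on a neighbourhood of $\partial\Omega$ inside $\Omega$, contradicting both conclusions you draw from the mollified-measure representation.

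The fix used in the references the paper cites (\cite{GOB,PM,OPdie}) is to cut off \emph{before} mollifying. Pick $\zeta_j\in C_c^\infty(\Omega)$ with $0\le\zeta_j\le1$, $\zeta_j\nearrow1$, and consider $\zeta_j\nu$, which is a genuine non-negative measure with compact support in $\Omega$; for a mollification parameter small enough relative to $\mathrm{dist}(\mathrm{supp}\,\zeta_j,\partial\Omega)$ the convolution stays inside $\Omega$, is non-negative, and has mass $\le\nu(\Omega)$. One then rewrites
\begin{equation*}
\zeta_j\nu=(\zeta_j f+G\cdot\nabla\zeta_j)-\mathrm{div}(\zeta_j G)
\end{equation*}
to identify the building blocks, mollifies each term, and shows along a diagonal that the new $f$-piece converges weakly in $L^1(\Omega)$ to $f$ and the new $G$-piece converges strongly in $(L^{p'}(\Omega))^N$ to $G$. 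The remaining parts of your write-up (that $f_n,G_n\in L^2(\Omega)$, that $f*\rho_n\to f$ strongly in $L^1$, that $G*\rho_n\to G$ strongly in $L^{p'}$) are fine on their own, but they cannot rescue the sign of $\nu_n$ without the preliminary cutoff.
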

\begin{Definition}
A sequence {$\{\mu_n\}_{n\in\mathbb{N}}\subset M(\Omega)$} is said to converge to a measure $\mu\in M(\Omega)$ in narrow topology if for every $\phi\in C^\infty_c(\Omega)$, we have $$\lim_{n\to\infty}\int_\Omega \phi d\mu_n=\int_\Omega \phi d\mu.$$
\end{Definition}
Before stating our main results below, we define the condition $(P_{\epsilon,\delta_*})$ below.\\
\textbf{Condition $(P_{\epsilon,\delta_*})$:} We say that a continuous function $\delta:\overline{\Omega}\to (0,\infty)$, satisfies the condition $(P_{\epsilon,\delta_*})$, if there exist $\delta_*\geq 1$ and $\epsilon>0$ such that $\delta(x)\leq \delta_*$ for every $x\in\Omega_\epsilon$, where 
$
\Omega_\epsilon:=\{y\in\Omega:\text{dist\,}(y,\partial\Omega)<\epsilon\}.
${
\begin{Remark}\label{rmkdel}
We observe that, if a continuous function $\delta:\overline{\Omega}\to (0,\infty)$ satisfies the condition $(P_{\epsilon,\delta_*})$ for some $\delta_*\geq 1$ and $\epsilon>0$, then for every $\gamma\geq \delta_*$, the function $\delta$ satisfies the condition $(P_{\epsilon,\gamma})$. Therefore, $\delta_*$ is not uniquely determined by $\delta$. Moreover, by the continuity of $\delta$, it follows that $\max_{\partial\Omega}\,\delta\leq\delta_*$. Furthermore, if $M>\max_{\partial\Omega}\,\delta$, then from \cite[page 493]{CMvar}, it follows that there exists an $\epsilon>0$ such that $\delta\leq M$ in $\Omega_{\epsilon}$ and hence, one can choose $\delta_*=\max\{1,M\}$.
\end{Remark} 
We state our main results only for the case 
$N>2$. However, we would like to emphasize that analogous results also hold for $N=2$, by taking into account Lemma \ref{emb} and following the lines of proof as those for the main results. More precisely, for $N=2$, exact statement for Theorems \ref{T3}-\ref{T1} will be valid and analogous statements will hold for remaining main results.}

First, we state our main existence results which reads as follows:
\subsection{Existence results}
\begin{Theorem}\label{T3}(Variable singular exponent)
    Let $\delta:\overline{\Omega}\to(0,\infty)$ be a continuous function {which is locally Lipschitz continuous in $\Omega$} and satisfies the condition $(P_{\epsilon,\delta_*})$ for some $\delta_*\geq 1$ and {for some }$\epsilon>0$. Let $1<p<\frac{N}{N-1}$ and suppose that $\nu,\,\mu$ are non-negative bounded Radon measures on $\Omega$ such that $\nu\in M^{p}_0(\Omega)$ and $\nu$ is non-singular with respect to the Lebesgue measure. Then the problem $(\ref{ME})$ admits a weak solution $u\in W^{1,p}_{\mathrm{loc}}(\Omega)\cap L^1(\Omega)$ in the sense of Definition \ref{def1} such that 
    \begin{enumerate}
        \item[(i)] If $\delta_*=1$, then $u\in W^{1,p}_0(\Omega)$.
        \item[(ii)] If $\delta_*>1$, then $u\in W^{1,p}_{\mathrm{loc}}(\Omega)$ such that $T_k(u)\in W^{1,2}_{\mathrm{loc}}(\Omega)$ and $T_k^\frac{\delta_*+1}{2}(u)\in W^{1,2}_0(\Omega)$ for every $k>0.$
    \end{enumerate}
\end{Theorem}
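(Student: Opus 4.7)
The plan is to adapt the approximation scheme of \cite{BocOrs, POesaim, OPdie} to the mixed local--nonlocal framework. Using Lemma \ref{aprox}, I decompose $\nu=f-\mathrm{div}(G)$ with $f\in L^1(\Omega)$ and $G\in(L^{p'}(\Omega))^N$ and approximate $\nu,\mu$ by non-negative sequences $\{\nu_n\},\{\mu_n\}\subset L^2(\Omega)$ enjoying the convergences supplied by the lemma. The approximate problems, solved in the appendix by a fixed-point argument, yield positive bounded solutions $u_n\in W^{1,2}_0(\Omega)\cap L^\infty(\Omega)$ of
\[
\mathcal{M}u_n=\frac{\nu_n}{(u_n+\tfrac{1}{n})^{\delta(x)}}+\mu_n,
\]
together with a locally uniform lower bound: for every $\omega\Subset\Omega$ there exists $C(\omega)>0$, independent of $n$, with $u_n\geq C(\omega)$ in $\omega$. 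This lower bound is the cornerstone of the entire argument, since it removes the singular character of the nonlinearity on every compact subset of $\Omega$.

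The second step is to derive $n$-uniform a priori estimates, splitting according to the value of $\delta_*$. In case (i), $\delta_*=1$: testing with $u_n$ and invoking $(P_{\epsilon,\delta_*})$ together with the local lower bound, I control $\int u_n^{1-\delta(x)}\,d\nu_n$; the standard Boccardo--Gallou\"et truncation-plus-Marcinkiewicz argument, applicable because $\{\nu_n\}$ and $\{\mu_n\}$ are uniformly bounded in $L^1$, then yields $\{u_n\}$ bounded in $W^{1,p}_0(\Omega)$ for the given $p<\frac{N}{N-1}$. In case (ii), $\delta_*>1$: testing with $T_k^{\delta_*}(u_n)$, the singular contribution reduces to $\int\frac{T_k^{\delta_*}(u_n)}{(u_n+1/n)^{\delta(x)}}\,d\nu_n$, which is dominated by $\int T_k(u_n)^{\delta_*-\delta(x)}\,d\nu_n$ and is $n$-uniformly controlled via $(P_{\epsilon,\delta_*})$ near $\partial\Omega$ and via the interior lower bound away from it. The local gradient contribution then bounds $T_k^{(\delta_*+1)/2}(u_n)$ in $W^{1,2}_0(\Omega)$, while localized test functions of the form $\eta^2 u_n$ with $\eta\in C_c^\infty(\Omega)$, exploiting the uniform positivity on $\mathrm{supp}\,\eta$, produce the $W^{1,p}_{\mathrm{loc}}$ bound. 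In both cases, standard measure data techniques (truncations as test functions applied to $u_n-u_m$ combined with the ellipticity and monotonicity of $\mathcal{M}$) extract, along a subsequence, $u_n\to u$ a.e., strongly in $L^1(\Omega)$, and $\nabla u_n\to\nabla u$ a.e.

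The final step is the limit passage in the weak formulation. Fix $\phi\in C_c^\infty(\Omega)$ with $\mathrm{supp}\,\phi\Subset\omega$. The $\mu_n$-term converges by the narrow convergence of $\{\mu_n\}$. The local bilinear term passes by the weak $L^p_{\mathrm{loc}}$-convergence of $\nabla u_n$ together with the pointwise convergence. The nonlocal double integral passes by dominated convergence, using a uniform seminorm bound on compact sets and a tail estimate following from $u_n\to u$ in $L^1(\Omega)$. The main obstacle is the singular term $\int_\Omega\frac{\phi}{(u_n+1/n)^{\delta(x)}}\,d\nu_n$, since $\nu_n\to\nu$ only in a mixed sense: weakly in $L^1$ for the $f_n$-part and strongly in $W^{-1,p'}$ for the $G_n$-part. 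The key observation is that $g_n:=\frac{\phi}{(u_n+1/n)^{\delta(x)}}$ is uniformly bounded in $L^\infty(\Omega)\cap W^{1,p}_0(\Omega)$; the $W^{1,p}_0$-bound for $g_n$ requires the local Lipschitz hypothesis on $\delta$ (so that $\nabla g_n$ makes pointwise sense via the chain rule) together with the uniform interior lower bound on $\mathrm{supp}\,\phi$. Since $g_n\to\frac{\phi}{u^{\delta(x)}}$ a.e., Theorem \ref{ref} handles the $f_n$-part of $\nu_n$ while the strong $L^{p'}$-convergence of $G_n$ handles the $-\mathrm{div}(G_n)$-part, yielding
\[
\lim_{n\to\infty}\int_\Omega g_n\,d\nu_n=\Big\langle\nu,\,\tfrac{\phi}{u^{\delta(x)}}\Big\rangle_{L^1+W^{-1,p'},\,W^{1,p}_0\cap L^\infty}.
\]
Combining all the limits in the weak formulation of the approximate problem produces Definition \ref{def1} and establishes both (i) and (ii).
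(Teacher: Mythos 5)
Your approximation scheme misses a structural step that the theorem's hypotheses are designed to support: the Lebesgue decomposition of $\nu$. You apply Lemma \ref{aprox} directly to $\nu$, writing $\nu = f - \mathrm{div}(G)$ and approximating by $\nu_n = f_n - \mathrm{div}(G_n)$ with $f_n \rightharpoonup f$ only weakly in $L^1(\Omega)$. This cannot yield the locally uniform lower bound $u_n \geq C(\omega)$ on $\omega \Subset \Omega$ that your entire argument leans on. In Lemma \ref{LD11} that bound comes from comparing $u_n$ with the solution $w$ of $\mathcal{M}w = T_1(f)/(w+1)^{\delta(x)}$, and this comparison works precisely because the numerator in the approximate problem is $T_n(f) + h_n \geq T_1(f)$ pointwise for all $n \geq 1$, with $T_1(f)$ a \emph{fixed} nonzero non-negative $L^1$ function. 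Two things break in your version: first, the $f$ produced by Theorem \ref{dec1} is merely \emph{some} $L^1$ component of a decomposition $\nu = f - \mathrm{div}(G)$ and need not be the Radon--Nikodym density of the absolutely continuous part of $\nu$; it can be identically zero even when $\nu$ is non-singular, so the hypothesis ``$\nu$ non-singular'' is not actually exploited. Second, even if $f \not\equiv 0$, the weak $L^1$ convergence $f_n \rightharpoonup f$ gives no pointwise lower bound on $f_n$, so $\nu_n$ can vanish on any fixed compact set. Without the uniform interior positivity, the a priori estimates in Lemma \ref{LD2} (which split $\Omega$ into $\Omega_\epsilon$ and $\Omega\cap\Omega_\epsilon^c$ and use $u_n \geq C$ on the latter) and the limit passage in $\int \phi (u_n+1/n)^{-\delta(x)}\,d\nu_n$ both collapse.

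The correct setup, as in the paper, first writes $\nu = \nu_a + \nu_s$ (Lebesgue decomposition), lets $f \in L^1(\Omega)\setminus\{0\}$ be the Radon--Nikodym density of $\nu_a$ — here the non-singularity of $\nu$ guarantees $f \not\equiv 0$ — and only then applies Theorem \ref{dec1} and Lemma \ref{aprox} to the singular-but-$p$-capacity-diffuse part $\nu_s = H - \mathrm{div}(G)$, approximating it by $h_n = H_n - \mathrm{div}(G_n)$. The approximate source is then $T_n(f) + h_n$, where the \emph{monotone pointwise truncation} $T_n(f)$ carries the positivity. Once you make this change, the remainder of your outline is essentially the paper's: the Marcinkiewicz/truncation estimates (Lemma \ref{LD2}), the expansion of $\nabla\big(\phi/(u_n+1/n)^{\delta(x)}\big)$ producing a $\nabla\delta$ term controlled via the local Lipschitz hypothesis and the boundedness of $t\mapsto t^{-r}\log t$ on $[C(\omega),\infty)$, Theorem \ref{ref} for the weak-$L^1$ piece $H_n$, and strong $L^{p'}$ convergence of $G_n$ paired with weak $L^p$ convergence of the gradient factor for the divergence piece.
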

If $\nu$ turns out to be an integrable function, then we do not require {$\delta$ to be locally Lipschitz continuous in $\Omega$.} In this case the result is stated as follows:
\begin{Theorem}\label{T1}(Variable singular exponent)
Let $\delta:\overline{\Omega}\to (0,\infty)$ be a continuous function satisfying the condition $(P_{\epsilon,\delta_*})$ for some $\delta_*\geq 1$ and {for some} $\epsilon>0$. Assume that $\nu\in L^1(\Omega)\setminus\{0\}$ is a non-negative function in $\Omega$ and $\mu$ is a non-negative bounded Radon measure on $\Omega.$ Then for every $1<p<\frac{N}{N-1}$, the equation (\ref{ME}) admits a weak solution $u\in W^{1,p}_{\mathrm{loc}}(\Omega)\cap L^1(\Omega)$ in the sense of the Definition \ref{def1} such that
 \begin{enumerate}
        \item[(i)] If $\delta_*=1$, then $u\in W^{1,p}_0(\Omega)$.
        \item[(ii)] If $\delta_*>1$, then $u\in W^{1,p}_{\mathrm{loc}}(\Omega)$ {such that $T_k(u)\in W^{1,2}_{\mathrm{loc}}(\Omega)$ and }$T_k^\frac{\delta_*+1}{2}(u)\in W^{1,2}_0(\Omega)$ for every $k>0.$
    \end{enumerate}
\end{Theorem}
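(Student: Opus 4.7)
The plan is to use the approximation scheme sketched at the end of the introduction and developed in the appendix (Section 5). For $n\in\mathbb{N}$, set $\nu_n=T_n(\nu)$, which is a non-negative bounded function with $\nu_n\to\nu$ in $L^1(\Omega)$, and approximate $\mu$ by a sequence $\mu_n\in L^\infty(\Omega)\cap L^1(\Omega)$ with $\|\mu_n\|_{L^1(\Omega)}\le\|\mu\|_{M(\Omega)}$ and $\mu_n\to\mu$ in the narrow topology (e.g.\ truncation plus mollification). Regularise the singularity and consider
\begin{equation*}
\mathcal{M}u_n=\frac{\nu_n}{(u_n^++\frac{1}{n})^{\delta(x)}}+\mu_n \text{ in }\Omega,\qquad u_n=0 \text{ in }\mathbb{R}^N\setminus\Omega.
\end{equation*}
Existence of a non-negative bounded weak solution $u_n\in W_0^{1,2}(\Omega)\cap L^\infty(\Omega)$ is obtained by a Schauder fixed-point argument applied to $v\mapsto u$ where $u$ solves $\mathcal{M}u=\frac{\nu_n}{(v^++1/n)^{\delta(x)}}+\mu_n$, using coercivity of $\mathcal{M}$ on $W_0^{1,2}(\Omega)$ (via Lemma \ref{locnon1}) together with the $L^\infty$ bound on the right-hand side.

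Next I establish a priori estimates uniform in $n$, which are the core of the appendix. A strong minimum principle for $\mathcal{M}$, combined with $\nu\not\equiv 0$ and $\mu_n\geq 0$, yields for every $\omega\Subset\Omega$ a constant $C(\omega)>0$ such that $u_n\ge C(\omega)$ in $\omega$, uniformly in $n$. Testing with $T_k(u_n)$ and $G_k(u_n)$ plus the truncation-decomposition à la Boccardo--Gallouët produces, for every $1<p<\frac{N}{N-1}$, a uniform $W^{1,p}_{\mathrm{loc}}(\Omega)$ bound and uniform $L^1(\Omega)$ and $M^{\frac{N}{N-2}}(\Omega)$ bounds for $u_n$. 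When $\delta_*=1$, condition $(P_{\epsilon,\delta_*})$ bounds the singular term near $\partial\Omega$ and one upgrades to a global $W_0^{1,p}(\Omega)$ estimate. When $\delta_*>1$, testing with $T_k^{\delta_*}(u_n)$ (or more precisely a power-truncation chosen to compensate for the singular exponent near $\partial\Omega$) gives the local $W^{1,2}$ estimate on $T_k(u_n)$ and the global $W_0^{1,2}(\Omega)$ estimate on $T_k^{(\delta_*+1)/2}(u_n)$.

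With the estimates in hand, extract a subsequence (still denoted $u_n$) converging weakly in $W^{1,p}_{\mathrm{loc}}(\Omega)$, strongly in $L^1(\Omega)$, and a.e.\ in $\Omega$ to some $u$ with the required global regularity and with $u\ge C(\omega)>0$ on every $\omega\Subset\Omega$. For a test function $\phi\in C_c^\infty(\Omega)$ with $K:=\mathrm{supp}\,\phi$, the local term passes to the limit by weak convergence of $\nabla u_n$ on $K$ against $A(x)\nabla\phi\in L^{p'}(K)^N$. The nonlocal term passes via splitting $\mathbb{R}^N\times\mathbb{R}^N$ into $K\times K$, where the integrand is uniformly bounded in $L^1$ by the $W^{1,p}_{\mathrm{loc}}$ estimate (and Lemma \ref{l1}), and the complement, where $\phi(x)-\phi(y)$ is controlled and $u_n\to u$ in $L^1(\Omega)$; Vitali's theorem yields convergence. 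For the singular term I use the uniform lower bound on $K$: setting $\delta_K:=\max_K\delta$,
\begin{equation*}
\left|\frac{\nu_n\phi}{(u_n^++1/n)^{\delta(x)}}\right|\le \frac{\|\phi\|_\infty\,\nu}{C(K)^{\delta_K}}\in L^1(\Omega),
\end{equation*}
so Theorem \ref{ref} (or simply dominated convergence, since $\nu_n\to\nu$ in $L^1$ and the quotient converges a.e.\ on $K$ and weak-$\ast$ in $L^\infty(K)$) delivers the limit $\int_\Omega \phi u^{-\delta(x)}\,d\nu$. The measure term converges by narrow convergence of $\mu_n$ to $\mu$.

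The main obstacle I expect is twofold. First, the uniform positivity $u_n\ge C(\omega)$, needed both to make sense of the singular term in the limit and to invoke dominated convergence; this must be proved from scratch for the mixed operator $\mathcal{M}$ and independently of the regularisation parameter $n$, using only the non-triviality of $\nu$ and monotonicity in the data. Second, obtaining the global regularity statements in case $(\mathrm{ii})$ requires choosing a test function adapted to the variable exponent and to the hypothesis $(P_{\epsilon,\delta_*})$, so that the boundary behaviour of $\delta$ compensates for the blow-up of $u_n^{-\delta(x)}$ near $\partial\Omega$; the nonlocal contribution from testing with $T_k^{\delta_*}(u_n)$ must be controlled by a discrete chain-rule type inequality to produce the gradient of $T_k^{(\delta_*+1)/2}(u_n)$.
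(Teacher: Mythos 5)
Your proposal follows exactly the paper's scheme: regularize the singular term, approximate the measure $\mu$ by $L^\infty$ functions converging narrowly, solve the regularized problem by Schauder's fixed point in $W_0^{1,2}(\Omega)$, prove uniform $W^{1,p}_{\mathrm{loc}}$ (resp.\ $W_0^{1,p}$) a priori bounds and uniform truncation bounds with the truncation test functions $T_l(u_n)$, $T_l(G_k(u_n))$, $T_k^{\delta_*}(u_n)$, and pass to the limit. In fact the paper discharges Theorem~\ref{T1} in a single remark: since $\nu\in L^1$ is non-negative, $\nu\in M^p_0(\Omega)$ for all $1<p<\frac{N}{N-1}$ and its singular part $\nu_s$ vanishes, so the term $\nabla\delta$ (hence the local Lipschitz continuity of $\delta$) never enters, and the proof of Theorem~\ref{T3} carries over verbatim with Lemmas~\ref{LD11}--\ref{LD2}. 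Your reconstruction of those lemmas is faithful in outline.

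Two places in your proposal are glossed over where the paper's mechanism is essential. First, the uniform positive lower bound $u_n\ge C(\omega)$ on $\omega\Subset\Omega$ is \emph{not} a direct consequence of a strong minimum principle for $\mathcal{M}$, because the right-hand side of the approximate equation depends on $n$ and on $u_n$ itself, so a minimum principle alone gives positivity for each fixed $n$ but nothing uniform. The paper (Lemma~\ref{LD11}) instead compares each $u_n$ with one fixed subsolution $w$ of the $n$-independent auxiliary problem $\mathcal{M}w=T_1(f)/(w+1)^{\delta(x)}$, using the monotone bound $T_n(f)/(u_n+\tfrac1n)^{\delta(x)}\ge T_1(f)/(u_n+1)^{\delta(x)}$ to run a comparison argument yielding $u_n\ge w$; uniformity then comes for free from the interior positivity of $w$. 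Your allusion to \textquotedblleft monotonicity in the data\textquotedblright{} suggests you see this, but the barrier construction is the actual content and should be made explicit. Second, for passing to the limit in the nonlocal term when $\delta_*>1$, your Vitali argument on $K\times K$ vs.\ its complement requires equi-integrability of $(u_n(x)-u_n(y))(\phi(x)-\phi(y))K(x,y)$ near the diagonal, which does not follow directly from the $W^{1,p}_{\mathrm{loc}}$ bound with $p<2$; the paper sidesteps this by decomposing $u_n=T_1(u_n)+G_1(u_n)$, using the $W_0^{1,2}$ bound on $T_1^{(\delta_*+1)/2}(u_n)$ to pass the truncated part (via \cite[Theorem 3.6]{Sciunzi}) and the $W_0^{1,p}$ weak convergence of $G_1(u_n)$ for the rest. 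You should adopt this splitting or otherwise justify the equi-integrability.
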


\begin{Remark}\label{rmkT1-1}
Theorem \ref{T1} extends \cite[Theorem 1.1]{Ghosh} to the variable exponent case and to more general class of mixed operators $\mathcal{M}$.
\end{Remark}
{
\begin{Remark}\label{reg}
We observe that, under the hypotheses in Theorems \ref{T3} and \ref{T1}, for every $\gamma\geq\delta_*$ and $k>0$, the function $T_k(u)^\frac{\gamma+1}{2}$ belongs to $W_0^{1,2}(\Omega)$ in Theorems \ref{T3} and \ref{T1}.
\end{Remark}}
Our next result tells that when $\mu$ turns out to be a function and the function $\delta$ is constant, then we can relax the condition on $\nu$ to obtain the existence of a solution. 
\begin{Theorem}\label{T2}(Constant singular exponent)
    Assume that $\delta:\overline{\Omega}\to(0,\infty)$ be a constant function. We define
    $$q=\begin{cases}
    \frac{N(\delta+1)}{N+\delta-1}, \text{ if }0<\delta<1,\\
    2, \text{ if }\delta\geq 1.
\end{cases}$$
{Let $\mu\in L^\frac{N(\delta+1)}{N+2\delta}(\Omega )$ be a non-negative function in $\Omega$. Further, assume that $\nu\in M^q_0(\Omega)$ is a non-negative bounded Radon measure on $\Omega$ such that $\nu$ is non-singular with respect to the Lebesgue measure.} Then the equation (\ref{ME}) admits a weak solution $u\in W^{1,q}_{\mathrm{loc}}(\Omega)\cap L^1(\Omega)$ in the sense of Definition \ref{def1} such that
\begin{enumerate}
    \item [(i)] If $0<\delta\leq 1$, then $u\in W_0^{1,q}(\Omega)$.
    \item[(ii)] If $\delta>1$, then $u\in W_{\mathrm{loc}}^{1,2}(\Omega)$ such that $u^\frac{\delta+1}{2}\in W_0^{1,2}(\Omega)$. 
\end{enumerate}
\end{Theorem}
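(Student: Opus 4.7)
The plan is to adapt the approximation scheme of \cite{BocOrs, OPdie, POesaim} to the mixed local-nonlocal setting. Since $\nu\in M_0^q(\Om)$, Theorem \ref{dec1} provides a decomposition $\nu=f-\text{div}(G)$ with $f\in L^1(\Om)$ and $G\in (L^{q'}(\Om))^N$, and Lemma \ref{aprox} yields $L^2$-approximations $\nu_n=f_n-\text{div}(G_n)$ uniformly bounded in $L^1(\Om)$, with $f_n\rightharpoonup f$ weakly in $L^1(\Om)$ and $G_n\to G$ strongly in $(L^{q'}(\Om))^N$. Consider the regularised problem
\begin{align*}
\mc{M} u_n=\frac{\nu_n}{(u_n+\tfrac{1}{n})^\delta}+\mu\text{ in }\Om,\qquad u_n=0\text{ in }\R^N\setminus\Om.
\end{align*}
From the appendix one obtains $u_n\in W_0^{1,2}(\Om)\cap L^\infty(\Om)$ together with the uniform lower bound $u_n\geq c(\omega)>0$ on each $\omega\Subset\Om$, the latter exploiting that $\nu$ is non-singular with respect to the Lebesgue measure.

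The central a priori estimate comes from choosing $u_n^\delta$ as a test function and discarding the nonnegative nonlocal contribution:
\begin{align*}
\frac{4\delta}{(\delta+1)^2}\int_\Om |\na u_n^{(\delta+1)/2}|^2\,dx\leq \int_\Om \nu_n\,dx+\int_\Om \mu u_n^\delta\,dx.
\end{align*}
The first term on the right is uniformly bounded by Lemma \ref{aprox}, while H\"older's inequality with exponents $N(\delta+1)/(N+2\delta)$ and $N(\delta+1)/[\delta(N-2)]$, combined with the Sobolev embedding applied to $u_n^{(\delta+1)/2}$, gives
\begin{align*}
\int_\Om \mu u_n^\delta\,dx\leq C\|\mu\|_{L^{N(\delta+1)/(N+2\delta)}}\|u_n^{(\delta+1)/2}\|_{W_0^{1,2}}^{2\delta/(\delta+1)}.
\end{align*}
Since $2\delta/(\delta+1)<2$, Young's inequality absorbs the last factor into the left-hand side, producing the uniform bound $\|u_n^{(\delta+1)/2}\|_{W_0^{1,2}}\leq C$. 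For $0<\delta\leq 1$, the pointwise identity $|\na u_n|=\frac{2}{\delta+1}u_n^{(1-\delta)/2}|\na u_n^{(\delta+1)/2}|$ together with H\"older's inequality with exponents $2/q$ and $2/(2-q)$ upgrades this into the global estimate $\|u_n\|_{W_0^{1,q}}\leq C$ with $q=N(\delta+1)/(N+\delta-1)$; the value of $q$ is precisely tuned so that $q(1-\delta)/(2-q)$ matches the Sobolev exponent $N(\delta+1)/(N-2)$ of $u_n^{(\delta+1)/2}$. For $\delta>1$, instead, the uniform positivity $u_n\geq c(\omega)$ yields $|\na u_n|^2\leq \tfrac{4c(\omega)^{1-\delta}}{(\delta+1)^2}|\na u_n^{(\delta+1)/2}|^2$ on $\omega\Subset\Om$, hence the local $W^{1,2}$ bound claimed in (ii).

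Extract a subsequence so that $u_n\to u$ a.e.\ in $\Om$, $u_n^{(\delta+1)/2}\rightharpoonup u^{(\delta+1)/2}$ in $W_0^{1,2}(\Om)$, $u\geq c(\omega)>0$ on every $\omega\Subset\Om$ (which supplies condition (a) of Definition \ref{def1}), and $\na u_n$ converges weakly in $L^q(\Om)$ or in $L^2_{\loc}(\Om)$ according to the range of $\delta$. Passing to the limit in the weak formulation against $\phi\in C_c^\infty(\Om)$: the local term is handled by weak convergence of gradients; the nonlocal term is handled by dominated convergence, with the integrable majorant provided by \eqref{nkernel} and the Lipschitz regularity of $\phi$ together with Lemma \ref{l1}. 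For the singular right-hand side, write
\begin{align*}
\int_\Om \frac{\phi}{(u_n+\tfrac{1}{n})^\delta}\nu_n\,dx=\int_\Om f_n\frac{\phi}{(u_n+\tfrac{1}{n})^\delta}\,dx+\int_\Om G_n\cdot\na\Big(\frac{\phi}{(u_n+\tfrac{1}{n})^\delta}\Big)dx.
\end{align*}
Since $u_n\geq c(\text{supp}\,\phi)>0$ uniformly, $\phi/(u_n+\tfrac{1}{n})^\delta$ converges a.e.\ and weakly-$*$ in $L^\infty(\Om)$ to $\phi/u^\delta$, so Theorem \ref{ref} together with $f_n\rightharpoonup f$ in $L^1(\Om)$ handles the first integral; strong convergence $G_n\to G$ in $(L^{q'}(\Om))^N$ together with weak $L^q$ convergence of $\na(\phi/(u_n+\tfrac{1}{n})^\delta)$ to $\na(\phi/u^\delta)$ handles the second, and the sum is identified with the duality pairing $\langle\nu,\phi/u^\delta\rangle$ appearing in Definition \ref{def1}. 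The main obstacle I anticipate is precisely this last step: verifying that $\na(\phi/(u_n+\tfrac{1}{n})^\delta)$ converges weakly in $L^q(\Om)$ and that $\phi/u^\delta$ actually belongs to $W_0^{1,q}(\Om)\cap L^\infty(\Om)$ so that the duality is well-defined; both rely on the chain rule combined with the uniform positivity of $u_n$ on $\text{supp}\,\phi$ and the gradient estimates just established.
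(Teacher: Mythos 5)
Your route is close in spirit to the paper's but diverges at a point that matters. You decompose the full measure $\nu$ directly via Theorem~\ref{dec1} as $\nu=f-\mathrm{div}(G)$ and then regularize $\nu$ as $\nu_n=f_n-\mathrm{div}(G_n)$. The paper instead first performs the Lebesgue decomposition $\nu=\nu_a+\nu_s$, extracts the density $f\in L^1(\Omega)\setminus\{0\}$ of $\nu_a$ via Radon--Nikodym (non-trivial precisely because $\nu$ is assumed non-singular), and only then applies Theorem~\ref{dec1} to the singular part $\nu_s=H-\mathrm{div}\,G$. This two-step decomposition is not cosmetic: the uniform local positivity $u_n\geq C(\omega)>0$ in Lemma~\ref{LD11} is obtained by comparison with the solution $w$ of $\mathcal{M}w=T_1(f)/(w+1)^{\delta(x)}$, and this requires a fixed $f\in L^1(\Omega)\setminus\{0\}$ sitting inside the numerator of the approximate problem. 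In your scheme the source is $\nu_n/(u_n+\tfrac{1}{n})^\delta+\mu$, and the function $f$ supplied by Theorem~\ref{dec1} has no guaranteed relation to the absolutely continuous part of $\nu$ -- it can perfectly well vanish identically even when $\nu$ is non-singular, since $\mathrm{div}(G)$ can absorb an absolutely continuous contribution. You acknowledge that the positivity ``exploits that $\nu$ is non-singular,'' but your decomposition never puts that hypothesis into a usable form, so the step ``from the appendix one obtains \dots the uniform lower bound $u_n\geq c(\omega)>0$'' is unjustified as written. This is the main gap.

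Two smaller issues. First, you leave $\mu\in L^{N(\delta+1)/(N+2\delta)}(\Omega)$ unregularized in the approximate problem, but the existence argument of Lemma~\ref{LD11} is a Lax--Milgram/Schauder setup requiring $L^2$ data; since $N(\delta+1)/(N+2\delta)<2$ in general, you need to truncate $\mu$ to $T_n(\mu)$ exactly as the paper does in \eqref{BP}. Second, for $0<\delta<1$ the test function $u_n^\delta$ is not obviously admissible in $W_0^{1,2}(\Omega)$ because $\nabla(u_n^\delta)=\delta u_n^{\delta-1}\nabla u_n$ need not be square-integrable near the zero set of $u_n$ on $\partial\Omega$; the paper uses the regularized test function $(u_n+\epsilon)^\delta-\epsilon^\delta$ (Lemma~\ref{BL2}~(b)) and likewise uses $T_k^\delta(u_n)$ rather than $u_n^\delta$ for $\delta>1$ (Lemma~\ref{BL2}~(c)). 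Your H\"older/Young argument for absorbing $\int_\Omega\mu\,u_n^\delta\,dx$ into the left-hand side, and the exponent bookkeeping that matches $q=N(\delta+1)/(N+\delta-1)$ to the Sobolev exponent of $u_n^{(\delta+1)/2}$, are both correct and essentially what Lemma~\ref{BL2} does once the test functions are fixed up. The final limit passage, including the duality pairing $\langle\nu,\phi/u^\delta\rangle$, is also the same as the paper's; but that step too presupposes the uniform positivity, so the gap above propagates.
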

\begin{Remark}\label{mrmk}
One can observe along the lines of the proofs of our main existence results that Theorems \ref{T3}-\ref{T2} holds even for the purely local equation that can be obtained by replacing $\mathcal{M}$ with the operator $\mathcal{A}$ in the equation \eqref{ME} where $\mathcal{A}u=\text{div}(A(x)\nabla u)$ with $A:\Omega\to\mathbb{R}^{N^2}$ a bounded elliptic matrix satisfying \eqref{lkernel}. To the best of our knowledge, such results are new even in the purely local case.
\end{Remark}
Our main regularity results are stated below.
\subsection{Regularity results}
\begin{Theorem}\label{T4}(Constant singular exponent)
    {Let $\delta:\overline{\Omega}\to [1,\infty)$ be a constant function and $\nu\in L^r(\Omega)\setminus\{0\}$, $\mu\in L^m(\Omega)$ for some $r,m\geq 1$ be two non-negative functions in $\Omega$.} Suppose $u$ is the weak solution of the problem (\ref{ME}) obtained in {Theorem \ref{T1}.} Then the following conclusions hold:
    \begin{enumerate}
        \item[(i)] If $r>\frac{N}{2}$, $m>\frac{N}{2}$, then $u\in L^\infty(\Omega)$.
        \item[(ii)] If $r>\frac{N}{2},$ $1<m<\frac{N}{2}$, then $u\in L^{m^{**}}(\Omega)$, {where $m^{**}=\frac{Nm}{N-2m}$.}
        \item[(iii)] If $1\leq r<\frac{N}{2}$, $m>\frac{N}{2}$, then $u\in L^{\frac{Nr(\delta+1)}{N-2r}}(\Omega)$.
        \item[(iv)] If $1\leq r<\frac{N}{2}$, $1<m<\frac{N}{2}$, then $u\in L^{q_1}(\Omega)$, 
        where $q_1=\min\Big\{m^{**},\frac{Nr(\delta+1)}{N-2r}\Big\}$ {with $m^{**}=\frac{Nm}{N-2m}$.} 
    \end{enumerate}
\end{Theorem}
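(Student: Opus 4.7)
The plan is to work at the level of the approximate solutions $u_n\in W_0^{1,2}(\Omega)\cap L^\infty(\Omega)$ constructed in the appendix, derive uniform integrability bounds via well-chosen test functions, and then transfer these bounds to $u$ via Fatou's lemma along the a.e.\ convergent subsequence $u_n\to u$. A key observation used throughout is that for any non-decreasing $\Phi$ the bilinear form $\int_{\mathbb{R}^N}\int_{\mathbb{R}^N}(u_n(x)-u_n(y))(\Phi(u_n(x))-\Phi(u_n(y)))K(x,y)\,dx\,dy$ is non-negative, so the nonlocal contribution can be discarded whenever we are seeking an upper bound.

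For part (i), I would test the approximate equation with $G_k(u_n)$ for $k\ge 1$. Ellipticity \eqref{lkernel} produces $\alpha\|\nabla G_k(u_n)\|_{L^2}^2$ on the left. On the level set $A_k:=\{u_n>k\}$ the assumption $\delta\ge 1$ combined with $u_n\ge k\ge 1$ yields $u_n^{-\delta}\le 1$, so the singular contribution is bounded by $\int_{A_k}\nu\,G_k(u_n)$. H\"older's inequality (with $\nu\in L^r$, $\mu\in L^m$) together with the Sobolev embedding then gives
$$
(h-k)\,|A_h|^{1/2^*}\le C\bigl(|A_k|^{1/r'-1/2^*}+|A_k|^{1/m'-1/2^*}\bigr),\qquad h>k\ge 1.
$$
The exponents on the right both exceed $1/2^*$ precisely when $r,m>N/2$, so Stampacchia's lemma produces a uniform $L^\infty$ bound on $u_n$ and the conclusion follows on passing to the limit.

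For parts (ii)--(iv), the test function is a power $\phi=u_n^{\beta}$ for a suitable $\beta\ge\delta$ (admissible since each $u_n$ is bounded, strictly positive and lies in $W_0^{1,2}(\Omega)$). The local term produces $\frac{4\alpha\beta}{(\beta+1)^2}\|\nabla u_n^{(\beta+1)/2}\|_{L^2}^2$, which by Sobolev embedding bounds $\|u_n\|_{L^{N(\beta+1)/(N-2)}}^{\beta+1}$ from below, while H\"older controls the right-hand side by $\|\nu\|_{L^r}\|u_n\|_{L^{(\beta-\delta)r'}}^{\beta-\delta}+\|\mu\|_{L^m}\|u_n\|_{L^{\beta m'}}^{\beta}$. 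The target exponent in each case is obtained by choosing $\beta$ to balance these: for (iii), equating $(\beta-\delta)r'=N(\beta+1)/(N-2)$ forces $\beta=[N(r-1)+\delta r(N-2)]/(N-2r)$ and gives the exponent $Nr(\delta+1)/(N-2r)$; for (ii), equating $\beta m'=N(\beta+1)/(N-2)$ forces $\beta=N(m-1)/(N-2m)$ and gives $m^{**}$; for (iv), taking $\beta$ so that both H\"older exponents lie at or below $N(\beta+1)/(N-2)$ yields the minimum exponent $q_1$. In each case the resulting algebraic inequality closes up to a uniform $L^{q_1}$ bound on $u_n$.

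The main technical obstacle is the limit passage, since the power test functions $u_n^\beta$ do not directly correspond to admissible test functions for the limit problem (where $u$ is only locally $W^{1,p}$). This is handled by converting the uniform $L^{q_1}$ bounds on $u_n$ into an $L^{q_1}$ bound on $u$ via Fatou's lemma along the a.e.-convergent subsequence from the appendix. A secondary point is checking that the chosen $\beta$ actually satisfies $\beta\ge\delta$, so that $u_n^{\beta-\delta}$ carries no negative power of $u_n$; in the borderline cases one instead tests with $T_k(u_n)^{\beta}$ and exploits the uniform positive lower bound on $u_n$ on compact subsets before letting $k\to\infty$.
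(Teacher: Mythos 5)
You take a genuinely different route from the paper. The paper does not test the approximate equation for $u_n$ directly with powers of $u_n$. Instead it first proves the comparison $u_n\le v_n+w_n$ (Lemma~\ref{EL1}), where $v_n$ and $w_n$ solve the \emph{decoupled} problems $\mathcal{M}v_n=T_n(f)/(v_n+\tfrac1n)^{\delta}$ and $\mathcal{M}w_n=T_n(g)$; it then derives separate $L^p$ bounds for $\{v_n\}$ (Lemma~\ref{EL3}) and $\{w_n\}$ (Lemma~\ref{EL2}) and concludes by taking the minimum exponent.

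This decoupling is not a stylistic choice, and your direct argument has a gap in parts (ii) and (iv). Testing with $\phi=u_n^{\beta}$ requires $\beta\ge\delta$: otherwise the singular term $\int_{\Omega}T_n(f)\,u_n^{\beta}/(u_n+\tfrac1n)^{\delta}\,dx$ behaves like $\int_\Omega T_n(f)(u_n+\tfrac1n)^{\beta-\delta}\,dx$, which is of order $n^{\delta-\beta}$ on the set where $u_n$ is small and is therefore \emph{not} uniformly bounded in $n$. Your proposed remedy---replacing $u_n^{\beta}$ by $T_k(u_n)^{\beta}$ and invoking the interior lower bound $u_n\ge C(\omega)$ on $\omega\Subset\Omega$---does not address this, because the blow-up happens in a boundary strip, precisely where that lower bound degenerates. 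On the other hand, the value $\beta=N(m-1)/(N-2m)$ you need to match the $\mu$-term at the Sobolev exponent satisfies $\beta\ge\delta$ only when $m\ge N(\delta+1)/(N+2\delta)$. If instead you enforce $\beta=\delta$, then $\delta m'>2^{*}(\delta+1)/2$ whenever $m<N(\delta+1)/(N+2\delta)$, so the $\mu$-term $\|\mu\|_{L^m}\|u_n\|_{L^{\delta m'}}^{\delta}$ can no longer be absorbed by the Sobolev norm on the left. Hence for $1<m<N(\delta+1)/(N+2\delta)$ no admissible $\beta$ exists in your scheme, yet the theorem claims the conclusion for all $1<m<\tfrac N2$. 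The paper sidesteps this entirely: the $w_n$ equation is non-singular, so it may be tested with $(w_n+\epsilon)^{\gamma}-\epsilon^{\gamma}$ for arbitrarily small $\gamma>0$, while the singular $v_n$ equation is only ever tested with powers $\ge\delta$. Parts (i) and (iii) of your sketch do go through (in (iii), $m>\tfrac N2$ forces $m'<\tfrac{2^{*}}{2}$, so the $\mu$-term is automatically subcritical once $\beta$ is chosen to match the $\nu$-term, and that $\beta$ is $\ge\delta$ since $r\ge 1$). To complete parts (ii) and (iv) you should adopt the decomposition $u_n\le v_n+w_n$ or supply a substitute argument for the small-$m$ range.
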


\begin{Theorem}\label{T6}(Constant singular exponent)
{Let $\delta:\overline{\Omega}\to (0,1)$ be a constant function and $\nu\in L^r(\Omega)\setminus\{0\}$, $\mu\in L^m(\Omega)$ for some $r,m\geq 1$ be two non-negative functions in $\Omega$.} Suppose $u$ is the weak solution of the problem (\ref{ME}) obtained in {Theorem \ref{T1}.} Then the following conclusions hold:
    \begin{enumerate}
        \item[(i)] If $r>\frac{N}{2}$, $m>\frac{N}{2}$, then $u\in L^\infty(\Omega)$.
        \item[(ii)] If $r>\frac{N}{2},$ $1<m<\frac{N}{2}$, then $u\in L^{m^{**}}(\Omega)$, {where $m^{**}=\frac{Nm}{N-2m}$.}
        \item[(iii)] If $\left (\frac{2^*}{(1-\delta)}\right )'\leq r<\frac{N}{2}$, $m>\frac{N}{2}$, then $u\in L^{\frac{Nr(\delta+1)}{N-2r}}(\Omega)$.
        \item[(iv)] If $\left (\frac{2^*}{(1-\delta)}\right )'\leq r<\frac{N}{2}$, $1<m<\frac{N}{2}$, then $u\in L^{q_2}(\Omega)$, where $q_2=\min\Big\{m^{**},\frac{Nr(\delta+1)}{N-2r}\Big\}$ {with $m^{**}=\frac{Nm}{N-2m}$.}
    \end{enumerate}
\end{Theorem}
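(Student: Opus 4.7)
The proof plan is to establish a priori $L^q$ bounds on the approximating sequence $\{u_n\}$ from Section 5 and then pass to the limit via Fatou's lemma; only case (i) needs a Stampacchia-type iteration. The argument structurally mirrors Theorem \ref{T4}, but since $0<\delta<1$ keeps $u^{-\delta}$ subcritical, the natural test function is an honest nonnegative power $u_n^\gamma$ rather than a truncation.

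For cases (ii)--(iv), testing the approximate equation against $u_n^\gamma$ (with $\gamma\geq 1$), using \eqref{lkernel}, the positivity of the nonlocal bilinear form on power-type test functions, the algebraic inequality $(a-b)(a^\gamma-b^\gamma)\geq \tfrac{4\gamma}{(\gamma+1)^2}(a^{(\gamma+1)/2}-b^{(\gamma+1)/2})^2$, and Sobolev's embedding, I obtain
\[
\|u_n\|_{L^{\beta\,2^*}(\Omega)}^{\gamma+1}\leq C\int_\Omega u_n^{\gamma-\delta}\nu\,dx+C\int_\Omega u_n^\gamma\mu\,dx,
\]
with $\beta=(\gamma+1)/2$. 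In case (iii) I match $\gamma$ by the equality $(\gamma-\delta)r'=\beta\,2^*$, which gives $\gamma=(2^*+2\delta r')/(2r'-2^*)$ and, after algebra, $\beta\,2^*=Nr(\delta+1)/(N-2r)$. A short manipulation shows the hypothesis $(2^*/(1-\delta))'\leq r$ is exactly equivalent to $\beta\,2^*\geq 2^*$, i.e., to $\gamma\geq 1$; equivalently, the target integrability strictly improves upon the baseline $L^{2^*}$ granted by the energy estimate. Hölder with exponents $r,r'$ on the singular integral yields $\|\nu\|_r\|u_n\|_{L^{\beta\,2^*}}^{\gamma-\delta}$, and since $2\beta-(\gamma-\delta)=1+\delta>0$ one may rearrange to extract a uniform bound on $\|u_n\|_{L^{\beta\,2^*}}$; the condition $m>N/2$ makes the $\mu$-integral controllable by the same norm to a strictly smaller power and thus absorbable. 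Case (ii) is the symmetric one driven by $\mu$: the matching $\gamma m'=\beta\,2^*$ yields $\beta\,2^*=m^{**}$, and $r>N/2$ handles the singular piece. Case (iv) is the combination, producing $q_2=\min\{m^{**},Nr(\delta+1)/(N-2r)\}$.

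Case (i) is treated by Stampacchia iteration on super-level sets $\{u_n>k\}$ with test function $G_k(u_n)^{2\beta-1}$: since $r,m>N/2$, one obtains a decay $|\{u_n>k\}|\leq C(k-k_0)^{-\eta}$ with $\eta>1$, whose standard iteration delivers a uniform $L^\infty$ bound. The final step transferring from $u_n$ to $u$ uses the a.e.\ convergence $u_n\to u$ from Section 5 together with Fatou's lemma (and lower-semicontinuity of $L^\infty$-norms in case (i)). The main technical subtlety, I expect, is the algebraic verification that $(2^*/(1-\delta))'\leq r$ is the sharp compatibility condition making the Hölder--Sobolev chain produce integrability strictly beyond $L^{2^*}$; this threshold does not appear in Theorem \ref{T4} because $\delta\geq 1$ there forces a truncation-based scheme of a different algebraic character.
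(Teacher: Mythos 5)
Your plan is structurally different from the paper's. The paper does \emph{not} test the $u_n$-equation directly with power test functions; instead it invokes the comparison Lemma~\ref{EL1} to decouple the two source terms, writing $u_n\le v_n+w_n$ where $v_n$ solves \eqref{M0} (singular piece only) and $w_n$ solves \eqref{M1} (regular piece only), and then estimates $v_n$ and $w_n$ separately via Lemmas~\ref{EL4} and~\ref{EL2}. This decoupling is not a cosmetic difference: it is precisely what makes cases (ii) and (iv) work for all $1<m<\tfrac{N}{2}$.

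Your direct testing scheme has a genuine gap in case (ii). Matching $\gamma m'=\beta\,2^*$ with $\beta=(\gamma+1)/2$ gives $\gamma=\frac{2^*}{2m'-2^*}$, and this $\gamma$ is $\ge 1$ only when $m\ge (2^*)'=\frac{2N}{N+2}$, and is $\ge\delta$ only when $m\ge \frac{N(1+\delta)}{N+2\delta}$ (both thresholds lie strictly in $(1,\tfrac{N}{2})$ for $N>2$, $0<\delta<1$). For $1<m<\frac{N(1+\delta)}{N+2\delta}$ the matched exponent satisfies $\gamma<\delta$, so the singular contribution
\[
\int_\Omega \frac{T_n(f)\,u_n^\gamma}{(u_n+\tfrac1n)^{\delta}}\,dx
\]
cannot be bounded by $\int_\Omega u_n^{\gamma-\delta}f\,dx$ in any useful way: the exponent $\gamma-\delta$ is negative and the integrand blows up where $u_n$ is small, so ``$r>\tfrac N2$ handles the singular piece'' breaks down. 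One also cannot rescue this range by falling back to $\gamma=1$ (or $\gamma=\delta$): the $\mu$-integral $\int_\Omega u_n\,g\,dx$ needs $m'\le 2^*$, i.e.\ $m\ge (2^*)'$, which is exactly what fails there. The same obstruction infects the small-$m$ part of case (iv). This is why the paper first removes the singular term via comparison: once $v_n\in L^\infty(\Omega)$ from $r>\tfrac N2$ (Lemma~\ref{EL5}-(a)/\ref{EL4}-(a)), the $m^{**}$-bound comes entirely from $w_n$, a purely non-singular equation, where the full range $1<m<\tfrac{N}{2}$ is unproblematic (Lemma~\ref{EL2}-(b)).

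Your case (iii) analysis is sound, and the algebraic observation that $(2^*/(1-\delta))'\le r$ is exactly $\gamma\ge 1$ is a nice reformulation consistent with the paper's threshold; your case (i) via Stampacchia iteration also works in the direct framework. But to make (ii) and (iv) go through you should either adopt the paper's comparison decomposition $u_n\le v_n+w_n$, or at minimum argue separately in the small-$m$ regime, e.g.\ by using the $L^\infty$-bound on the singular part to reduce to the non-singular problem.
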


\begin{Theorem}\label{T7}(Variable singular exponent)
 Let $\delta:\overline{\Omega}\to(0,\infty)$ be a continuous function and satisfying the condition $(P_{\epsilon,\delta_*})$ for some $\delta_*\geq 1$ and {for some }$\epsilon>0$. Assume that $\nu\in L^r(\Omega)\setminus\{0\}$ and {$\mu\in L^m(\Omega)$} for some $r,m\geq 1$ are non-negative functions in $\Omega$. Suppose $u$ is the weak solution of the problem (\ref{ME}) obtained in {Theorem \ref{T1}.} Then the following conclusions hold:
    \begin{enumerate}
        \item[(i)] If $r>\frac{N}{2}$, $m>\frac{N}{2}$, then $u\in L^\infty(\Omega)$.
        \item[(ii)] If $r>\frac{N}{2},$ $1<m<\frac{N}{2}$, then $u\in L^{m^{**}}(\Omega)$, {where $m^{**}=\frac{Nm}{N-2m}$.}
        \item[(iii)] If $\frac{N(\delta_*+1)}{N+2\delta_*}\leq r<\frac{N}{2}$, $m>\frac{N}{2}$, then $u\in L^{r^{**}}(\Omega)$, where $r^{**}=\frac{Nr}{N-2r}$.
        \item[(iv)] If $\frac{N(\delta_*+1)}{N+2\delta_*}\leq r<\frac{N}{2}$, $1<m<\frac{N}{2}$, then $u\in L^{q_3}(\Omega)$, where $q_3=\min\big\{m^{**},r^{**}\big\}$ { with $m^{**}=\frac{Nm}{N-2m}$ and $r^{**}=\frac{Nr}{N-2r}$.}
    \end{enumerate}
\end{Theorem}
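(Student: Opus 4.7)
The plan is to derive uniform a priori estimates for the approximating sequence $\{u_n\}_{n\in\mathbb{N}}$ constructed in the appendix and then pass to the limit by Fatou's lemma. Each $u_n$ solves a regularized non-singular problem with right-hand side $\nu_n/(u_n+1/n)^{\delta(x)}+\mu_n$, lies in $W_0^{1,2}(\Omega)\cap L^\infty(\Omega)$, and is bounded below on every compact $\omega\Subset\Omega$ by a constant independent of $n$. Three elementary observations drive the analysis: (a) on $\{u_n\geq 1\}$ one has $u_n^{-\delta(x)}\leq 1$ regardless of $\delta(x)$; (b) on $\Omega\setminus\Omega_\epsilon$ the factor $u_n^{-\delta(x)}$ is bounded by a constant depending only on $\epsilon$ and the local lower bound; and (c) on $\Omega_\epsilon\cap\{u_n\leq 1\}$, condition $(P_{\epsilon,\delta_*})$ gives $u_n^{-\delta(x)}\leq u_n^{-\delta_*}$.

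For case (i), I apply Stampacchia's iteration with test function $G_k(u_n)$ for $k\geq 1$. The nonlocal contribution $\iint(u_n(x)-u_n(y))(G_k(u_n)(x)-G_k(u_n)(y))K(x,y)\,dxdy$ is nonnegative by monotonicity and may be discarded, while observation (a) bounds the singular integrand by $\nu$. Combining Sobolev with H\"older against $\nu,\mu\in L^{N/2+\eta}$ produces the usual super-linear decay of $|\{u_n>k\}|$, and Stampacchia's lemma yields a uniform $L^\infty$ bound. Case (ii) is analogous, this time with test function $G_1(u_n)^{2\beta-1}$: since it is supported on $\{u_n\geq 1\}$, the singular integrand is again controlled by $\nu$; Sobolev applied to $|\nabla G_1(u_n)^\beta|^2$ and H\"older against $\mu\in L^m$ close the estimate when $\beta$ is tuned so that $\beta\cdot 2^*=m^{**}$, and the $\nu$-contribution is strictly subcritical since $r>N/2$.

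Cases (iii) and (iv) form the main work. Here I test with $u_n^{2\beta-1}$ for $\beta\geq(\delta_*+1)/2$ to be optimized. The local part yields $|\nabla u_n^\beta|^2$ up to a constant depending on $\alpha$ and $\beta$, while the nonlocal part is nonnegative by the Stroock--Varopoulos convexity inequality. Splitting $\Omega$ according to (a)--(c), the singular integrand reduces to a bounded multiple of $\nu u_n^{2\beta-1}$ on $(\Omega\setminus\Omega_\epsilon)\cup(\Omega_\epsilon\cap\{u_n>1\})$, and to $\nu u_n^{2\beta-1-\delta_*}\leq\nu$ on $\Omega_\epsilon\cap\{u_n\leq 1\}$ precisely because $2\beta-1\geq\delta_*$. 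What remains is dominated by $\|\nu\|_{L^r}\|u_n^{2\beta-1}\|_{L^{r'}}+\|\mu\|_{L^m}\|u_n^{2\beta-1}\|_{L^{m'}}$, and combined with Sobolev $\|u_n^\beta\|_{L^{2^*}}\leq C\|\nabla u_n^\beta\|_{L^2}$ the estimate closes as soon as $(2\beta-1)r'\leq\beta\cdot 2^*$ (and analogously for $m'$ in (iv)), equivalently $\beta\leq r^{**}/2^*$ (and $\beta\leq m^{**}/2^*$). The twin requirements $\beta\geq(\delta_*+1)/2$ and $\beta\leq r^{**}/2^*$ are simultaneously satisfiable precisely under the threshold $r\geq N(\delta_*+1)/(N+2\delta_*)$, which is why this bound appears in the hypothesis. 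Choosing $\beta=r^{**}/2^*$ yields (iii), while $\beta=\min\{r^{**},m^{**}\}/2^*$ yields (iv); the uniform bounds transfer from $u_n$ to $u$ via a.e.\ convergence and Fatou's lemma. The main obstacle is the coupling between the variable exponent $\delta(x)$ and the singular factor, which is exactly what forces the threefold splitting of $\Omega$ and pins down the sharp lower bound on $r$ in the statement.
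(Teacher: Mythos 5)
Your route is genuinely different from the paper's. You test the full approximate equation for $u_n$ directly with functions of $u_n$ and close the estimates via a threefold splitting of $\Omega$. The paper instead compares: it introduces auxiliary solutions $v_n$ and $w_n$ of the \emph{decoupled} problems $\mathcal{M}v_n=T_n(\nu)/(v_n+1/n)^{\delta(x)}$ and $\mathcal{M}w_n=T_n(\mu)$, proves $u_n\leq v_n+w_n$ by a standard comparison argument (Lemma \ref{EL1}), estimates $v_n$ and $w_n$ separately (Lemmas \ref{EL5} and \ref{EL2}), and then takes the minimum of the resulting summability exponents. Your parts (i), (ii), (iii) are essentially sound modulo the usual $\varepsilon$-regularization of the power-type test functions (e.g.\ use $(u_n+\varepsilon)^{2\beta-1}-\varepsilon^{2\beta-1}$ to keep gradients in $L^2$ when $\beta<1$); the computation pinning the threshold $r\geq N(\delta_*+1)/(N+2\delta_*)$ to the compatibility of $\beta\geq(\delta_*+1)/2$ with $\beta=r^{**}/2^*$ is correct.

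The gap is in part (iv). Your test function $u_n^{2\beta-1}$ with $\beta=\min\{r^{**},m^{**}\}/2^*$ must simultaneously satisfy $\beta\geq(\delta_*+1)/2$ (to make $u_n^{2\beta-1-\delta_*}\leq 1$ on $\Omega_\epsilon\cap\{u_n\leq 1\}$, which is exactly how you control the singular term near $\partial\Omega$) and $\beta\leq m^{**}/2^*$ (to close the H\"older estimate against $\mu\in L^m$). The hypothesis of (iv) only guarantees $r^{**}/2^*\geq(\delta_*+1)/2$; it imposes no lower bound on $m$, so when $m$ is close to $1$ and $\delta_*$ is large one has $m^{**}/2^*<(\delta_*+1)/2$, and no admissible $\beta$ exists. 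Concretely, for $m$ near $1$, $m^{**}/2^*\approx 1/2<1\leq(\delta_*+1)/2$, so the singular term $\int_{\Omega_\epsilon\cap\{u_n\leq 1\}}\nu\,u_n^{2\beta-1-\delta(x)}$ degenerates as $u_n\to 0$ near $\partial\Omega$ and you cannot obtain a bound uniform in $n$. The paper's comparison sidesteps this entirely: the constraint $\gamma\geq\delta_*$ (your $2\beta-1\geq\delta_*$) is imposed only in Lemma \ref{EL5} when estimating $v_n$, where it is tied to $r$ alone, while the estimate for $w_n$ in Lemma \ref{EL2}(b) involves $m$ and no singular exponent at all; the two are then merged via $u_n\leq v_n+w_n$. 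To repair your argument you would either need to reproduce this decoupling, or run a two-step bootstrap (first a crude summability of $u_n$ coming from the $\nu$-term alone, then iterate in the $\mu$-term), which amounts to the same idea by another name.
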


\section{Proof of existence results}
\subsection{Proof of {Theorem \ref{T3}}}
{Suppose} that $\delta:\overline{\Omega}\to (0,\infty)$ is a {continuous} function {which is locally Lipschitz continuous in $\Omega$} and satisfies the condition $(P_{\epsilon,\delta_*})$ for some $\delta_*\geq 1$ and for some $\epsilon>0$. We prove the result when $\delta_*=1$ and when $\delta_*>1$ separately.

{{Assume that} $\nu\in M^p_0(\Omega)$ for some $1<p<\frac{N}{N-1}$, is a non-singular, non-negative bounded Radon measure on $\Omega$ and $\mu$ is a non-negative bounded Radon measure on $\Omega$.} Due to Lebesgue's decomposition theorem \cite[page 384]{Royden}, $\nu=\nu_a+\nu_s,$ where $\nu_a$ is absolutely continuous with respect to the Lebesgue measure and $\nu_s$ is singular with respect to the Lebesgue measure. By the Radon-Nikodym theorem \cite[ page 382]{Royden}, there exists a non-negative $f\in L^1(\Omega)\setminus\{0\}$ such that for every measurable set $E\subset\Omega$,
$\nu_a(E)=\int_E f\, dx.$

Since $\nu\in M^p_0(\Omega)$ and $0\leq\nu_s\leq\nu$, we have $\nu_s\in M^p_0(\Omega)$. By Theorem \ref{dec1}, there exists $0\leq H\in L^1(\Omega)$ and $G\in (L^{p'}(\Omega))^N$ such that $\nu_s=H-\mathrm{div}\, G$ {{(in distributional sense)}.} Furthermore, there exists a sequence of non-negative functions $\{h_n\}_{n\in\mathbb{N}}\subset L^2(\Omega)$ in $\Omega$ such that $||h_n||_{L^1(\Omega)}\leq C$ for some constant $C>0$ independent of $n$ and $h_n=H_n-\mathrm{div}\, G_n$ in $D'(\Omega)$, where {$H_n\in L^2(\Omega)$ such that} $H_n\rightharpoonup H$ {weakly} in $L^1(\Omega)$ and $G_n\to G$ {strongly} in $(L^{p'}(\Omega))^N$ (see Lemma \ref{aprox}). Since $\mu$ is a non-negative bounded Radon measure on $\Omega$, there exists a {non-negative} sequence $\{g_n\}_{n\in\mathbb{N}}\subset L^\infty(\Omega)$ such that $||g_n||_{L^1(\Omega)}\leq C$ {for some constant $C>0$ independent of $n$} and $g_n\rightharpoonup \mu$ in the narrow topology (\cite[Theorem A.7]{OLP}). For each $n\in\mathbb{N}$, we consider the following approximation of the given equation (\ref{ME}):
\begin{align}\label{DP}
    \begin{cases}
        \mathcal{M}u=\frac{T_n(f)+h_n}{(u+\frac{1}{n})^{\delta(x)}}+{g_n} \text{ in } \Omega,\\
        u=0 \text{ in }\mathbb{R}^N\setminus\Omega \text{ and } u>0 \text{ in }\Omega.
    \end{cases}
\end{align}
    By Lemma \ref{LD11}, for each $n\in\mathbb{N}$, there exists a unique weak solution $u_n\in W^{1,2}_0(\Omega)$ to the equation (\ref{DP}) such that for every $\omega\Subset\Omega$, there exists a constant $C(\omega)>0$ {(independent of $n$)} such that $u_n\geq C(\omega)$ in $\omega$ for all $n$. {By the weak formulation of (\ref{DP}), for every $\phi\in C^\infty_c(\Omega)$, we get
\begin{align}\label{A16}
\int_\Omega &A(x)\nabla u_n\cdot\nabla\phi \, dx+\int_{\mathbb{R}^N}\int_{\mathbb{R}^N}(u_n(x)-u_n(y))(\phi(x)-\phi(y))K(x,y)\, dx dy\nonumber\\
&=\int_\Omega\frac{T_n(f)\phi}{(u_n+\frac{1}{n})^{\delta(x)}}\, dx
+\int_\Omega \frac{H_n\phi}{(u_n+\frac{1}{n})^{\delta(x)}}\, dx+\int_\Omega G_n\cdot\nabla\left (\frac{\phi}{(u_n+\frac{1}{n})^{\delta(x)}}\right )\, dx\nonumber\\
&\quad +\int_\Omega\phi g_n\, dx.
\end{align}
We pass to the limit in \eqref{A16} for the cases $\delta_*=1$ and $\delta_*>1$ below.}
    \begin{enumerate}
\item[$(i)$] {Let $\delta_*=1$. By Lemma \ref{LD2}-$(a)$, there exists a subsequence of $\{u_n\}_{n\in\mathbb{N}}$, still denoted by {$\{u_n\}_{n\in\mathbb{N}}\subset W_0^{1,p}(\Omega)$} and $u\in W^{1,p}_0(\Omega)$} such that $u_n\rightharpoonup u$ {weakly} in $W^{1,p}_0(\Omega)$ and $u_n\to u$ {pointwise} a.e. in $\Omega.$
 Since $u_n\rightharpoonup u$ weakly in  $W^{1,p}_0(\Omega)$, for every $\phi\in C_c^{\infty}(\Omega)$, it follows that 
 \begin{align}\label{A17}
    &\lim_{n\to\infty}\int_\Omega A(x)\nabla u_n\cdot\nabla\phi\, dx=\int_\Omega A(x)\nabla u\cdot\nabla\phi\, dx,
    \end{align}
    and 
    \begin{equation}\label{B00}
    \begin{split}
&\lim_{n\to\infty }\int_{\mathbb{R}^N}\int_{\mathbb{R}^N}(u_n(x)-u_n(y))(\phi(x)-\phi(y))K(x,y)\, dx dy\\
&=\int_{\mathbb{R}^N}\int_{\mathbb{R}^N}(u(x)-u(y))(\phi(x)-\phi(y))K(x,y)\, dx dy.
\end{split}
\end{equation}
Moreover, since $g_n\rightharpoonup\mu$ in {the narrow topology}, for every $\phi\in C_c^{\infty}(\Omega)$, we have
\begin{align}
\lim_{n\to\infty}\int_\Omega \phi g_n\, dx=\int_\Omega \phi d\mu.
\end{align}
{Let $\omega=\mathrm{supp}\,\phi$}, then there exists a constant $C=C(\omega)>0$ {(independent of $n$)} such that $u_n\geq C$ in $\omega$, for all $n$. Since $f\in L^1(\Omega)$, from the Lebesgue's dominated convergence theorem, we get
\begin{align}\label{A18}
    \lim_{n\to\infty} \int_\Omega\frac{T_n(f)\phi}{(u_n+\frac{1}{n})^{\delta(x)}}\, dx=\int_\Omega\frac{f \phi}{u^{\delta(x)}}\, dx\quad\forall\phi\in C_c^{\infty}(\Omega).
\end{align}
{Since $H_n\rightharpoonup H$ weakly in $L^1(\Omega)$ and $\frac{\phi}{(u_n+\frac{1}{n})^{\delta(x)}}\to \frac{\phi}{u^{\delta(x)}}$ pointwise a.e. in $\Omega$ and weak* in $L^\infty(\Omega)$, by Theorem \ref{ref}, it follows that} 
\begin{align}\label{AA2}
    \lim_{n\to\infty}\int_\Omega\frac{H_n\phi}{(u_n+\frac{1}{n})^{\delta(x)}}\, dx=\int_\Omega\frac{H\phi}{u^{\delta(x)}}\, dx.
\end{align}
We only have to pass the limit in the second last term of (\ref{A16}).
We observe that for every $\phi\in C_c^{\infty}(\Omega)$,
\begin{align}\label{SA}
\int_\Omega G_n\cdot\nabla\left (\frac{\phi}{(u_n+\frac{1}{n})^{\delta(x)}}\right )\, dx&=\int_\Omega\frac{G_n\cdot\nabla\phi}{(u_n+\frac{1}{n})^{\delta(x)}}\, dx-\int_\Omega\frac{G_n\cdot\nabla\delta(x)}{(u_n+\frac{1}{n})^{\delta(x)}}\log{\Big(u_n+\frac{1}{n}\Big)}\phi\, dx\nonumber\\
&-\int_\Omega\frac{\delta(x)G_n\cdot\nabla u_n}{(u_n+\frac{1}{n})^{\delta(x)+1}}\phi\, dx.
\end{align}
The facts $G_n\to G$ {strongly} in $(L^{p'}(\Omega))^N$ and $\frac{\nabla\phi}{(u_n+\frac{1}{n})^{\delta(x)}}\to \frac{\nabla\phi}{u^{\delta(x)}}$ {strongly} in $(L^p(\Omega))^N$ are used to deduce that 
\begin{align}\label{SA1}
\lim_{n\to\infty}\int_\Omega\frac{G_n\cdot\nabla\phi}{(u_n+\frac{1}{n})^{\delta(x)}}\, dx=\int_\Omega\frac{G\cdot\nabla\phi}{u^{\delta(x)}}\, dx.
\end{align}
Moreover, since $\frac{\delta(x)\phi\nabla u_n}{(u_n+\frac{1}{n})^{\delta(x)+1}}\rightharpoonup \frac{\delta(x)\phi\nabla u}{u^{\delta(x)+1}}$ weakly in $(L^p(\Omega))^N$, we obtain
\begin{align}\label{Z1}
\lim_{n\to\infty}\int_\Omega\frac{\delta(x)G_n\cdot\nabla u_n}{(u_n+\frac{1}{n})^{\delta(x)+1}}\phi\, dx=\int_\Omega\frac{\delta(x)G\cdot\nabla u}{u^{\delta(x)+1}}\phi\, dx.
\end{align}
In order to pass to the limit in the second last integral in (\ref{SA}), {we use the local Lipschitz continuity of $\delta$ in $\Omega$. To be more precise, we use the fact {$|\nabla\delta|\in L^\infty(\omega)$} to apply Lebesgue's dominated convergence theorem, where $\omega=\mathrm{supp}\,\phi$.} Define $r:=\min_{\overline{\Omega}}\delta(x)>0$ and observe that  $\frac{\log{x}}{x^r}$ is bounded on $[C,\infty)$, where $C>0$ is a uniform lower bound of $u_n$ in $\omega$. We notice that
\begin{align*}
    \left |\frac{G_n\cdot\nabla\delta(x)}{(u_n+\frac{1}{n})^{\delta(x)}}\log{\Big(u_n+\frac{1}{n}\Big)}\phi\right |&=\left |\left (\frac{\phi}{(u_n+\frac{1}{n})^{\delta(x)-r}}\right )\left (\frac{\log{\Big(u_n+\frac{1}{n}\Big)}}{(u_n+\frac{1}{n})^r}\right ) G_n\cdot\nabla\delta\right|\leq C|G_n|,
\end{align*}
 in $\omega$. Using this together with the fact $\lim_{n\to\infty} \int_\Omega |G_n|=\int_\Omega |G|$, we conclude
\begin{align}\label{AA3}
\lim_{n\to\infty}\int_\Omega\frac{G_n\cdot\nabla\delta(x)}{(u_n+\frac{1}{n})^{\delta(x)}}\log{\Big(u_n+\frac{1}{n}\Big)}\phi\, dx=\int_\Omega \frac{G\cdot\nabla\delta}{u^{\delta(x)}}\log{u}\, \phi dx.
\end{align}
Combining (\ref{SA1})-(\ref{AA3}), the identity (\ref{SA}) leads to
\begin{align}\label{SA2}
    \lim_{n\to\infty}\int_\Omega G_n\cdot\nabla\left (\frac{\phi}{(u_n+\frac{1}{n})^{\delta(x)}}\right )\, dx=\int_\Omega G\cdot\nabla\left (\frac{\phi}{u^{\delta(x)}}\right )\, dx.
\end{align}
Thus, letting  $n\to\infty$ in both sides of the equality (\ref{A16}) and using (\ref{A17})-(\ref{AA2}) and (\ref{SA2}), we obtain
\begin{equation*}
\begin{split}
&\int_\Omega A(x)\nabla u\cdot\nabla\phi\, dx+\int_{\mathbb{R}^N}\int_{\mathbb{R}^N}(u(x)-u(y))(\phi(x)-\phi(y))K(x,y)\, dx dy\\
&=\int_\Omega\frac{f\phi}{u^{\delta(x)}}\, dx+\int_\Omega \frac{H\phi}{u^{\delta(x)}}\, dx\nonumber+\int_\Omega G\cdot\nabla\left (\frac{\phi}{u^{\delta(x)}}\right )\, dx+\int_\Omega\phi d\mu\nonumber\\
&=\int_\Omega \frac{\phi}{u^{\delta(x)}}\, d\nu+\int_\Omega \phi d\mu.
\end{split}
\end{equation*}
Hence, $u\in W^{1,p}_0(\Omega)$ is a weak solution of the equation (\ref{ME}).
\item[$(ii)$]
{Let $\delta_*>1$. We apply Lemma \ref{LD2}-$(b)$ and conclude that there exists a subsequence of $\{u_n\}_{n\in\mathbb{N}}$, still denoted by {$\{u_n\}_{n\in\mathbb{N}}\subset W^{1,p}_{\mathrm{loc}}(\Omega)$} and $u\in W^{1,p}_{\mathrm{loc}}(\Omega)$ such that }
\begin{align*}
    \begin{cases}
        u_n\rightharpoonup u \text{ weakly in } W^{1,p}_{\mathrm{loc}}(\Omega),\\
        u_n\to u \text{ pointwise a.e. in }\Omega.
    \end{cases}
\end{align*}
{In this case by repeating} the similar proof as in $(i)$ above, one can pass to the limit in all the integral of the R.H.S. in {(\ref{A16})} except the second integral, which is nonlocal. In order to pass to the limit there, by Lemma \ref{LD2}, since the sequence $\Big\{T_1^\frac{\delta_*+1}{2}(u_n)\Big\}_{n\in\mathbb{N}}$ is uniformly bounded in $W^{1,2}_0(\Omega)$, by a similar argument as in \cite [Theorem 3.6]{Sciunzi}, we have
\begin{align}\label{TE1}
   \lim_{n\to\infty} \int_{\mathbb{R}^N}&\int_{\mathbb{R}^N}(T_1(u_n)(x)-T_1(u_n)(y))(\phi(x)-\phi(y))K(x,y)\, dxdy=\nonumber\\
   &\int_{\mathbb{R}^N}\int_{\mathbb{R}^N}(T_1(u)(x)-T_1(u)(y))(\phi(x)-\phi(y))K(x,y)\, dxdy.
\end{align}
Furthermore, by Lemma \ref{LD2}, since $\{G_1(u_n)\}_{n\in\mathbb{N}}$ is uniformly bounded in $W^{1,p}_0(\Omega)$, {up to a subsequence we have} $G_1(u_n)\rightharpoonup G_1(u)$ {weakly} in $W^{1,p}_0(\Omega)$. Thus
\begin{align}\label{TE2}
     \lim_{n\to\infty} \int_{\mathbb{R}^N}&\int_{\mathbb{R}^N}(G_1(u_n)(x)-G_1(u_n)(y))(\phi(x)-\phi(y))K(x,y)\, dxdy=\nonumber\\
     &\int_{\mathbb{R}^N}\int_{\mathbb{R}^N}(G_1(u)(x)-G_1(u)(y))(\phi(x)-\phi(y))K(x,y)\, dxdy.
\end{align}
Combining (\ref{TE1}) and (\ref{TE2}) and using the fact $u_n=T_1(u_n)+G_1(u_n)$, we obtain
\begin{align}
     \lim_{n\to\infty} \int_{\mathbb{R}^N}\int_{\mathbb{R}^N}(u_n(x)-u_n(y))&(\phi(x)-\phi(y))K(x,y)\, dxdy=\nonumber\\
     &\int_{\mathbb{R}^N}\int_{\mathbb{R}^N}(u(x)-u(y))(\phi(x)-\phi(y))K(x,y)\, dxdy.
\end{align}
Letting $n\to\infty$ on both sides of the identity {(\ref{A16})}, we obtain
\begin{align*}
     \int_\Omega A(x)\nabla u\cdot\nabla\phi\, dx+\int_{\mathbb{R}^N}\int_{\mathbb{R}^N}&(u(x)-u(y))(\phi(x)-\phi(y))K(x,y)\, dxdy\nonumber\\
     &=\int_\Omega\frac{\phi}{u^\delta}\, d\nu+\int_\Omega \phi d\mu.
\end{align*}
Thus $u\in W^{1,p}_{\mathrm{loc}}(\Omega)$ solves the equation (\ref{ME}). {Moreover, by Lemma \ref{LD2}-$(b)$ and {Lemma \ref{emb}}, one has the sequence $\{G_1(u_n)\}_{n\in\mathbb{N}}$ is uniformly bounded in $L^1(\Omega)$. Taking this into account along with the fact that $|T_1(u_n)|\leq 1$ in $\Omega$ and $u_n=T_1(u_n)+G_1(u_n)$, we obtain that $\{u_n\}_{n\in\mathbb{N}}$ is uniformly bounded in $L^1(\Omega)$. Thus, by Fatou's Lemma, it follows that $u\in L^1(\Omega).$ Furthermore, using Lemma \ref{LD2}-$(b)$ one obtain that $T_k(u)\in W^{1,2}_{\mathrm{loc}}(\Omega)$ such that $T^\frac{\delta_*+1}{2}_k(u)\in W^{1,2}_0(\Omega)$ for every $k>0$.} 
\end{enumerate}

\subsection{Proof of {Theorem \ref{T1}}}
{Suppose that $\delta:\overline{\Omega}\to (0,\infty)$ is a continuous function satisfying the condition $(P_{\epsilon,\delta_*})$ for some $\delta_*\geq 1$ and for some $\epsilon>0$.} Since $\nu\in L^1(\Omega)\setminus\{0\}$ is a non-negative, therefore $\nu\in M^p_0(\Omega)$ for every $1<p<\frac{N}{N-1}$. Moreover, {since} $\nu_s=0$, hence the gradient of $\delta$ does not appear, so the {locally Lipschitz continuity of $\delta$ is not required here}. Thus taking into account Lemma \ref{LD11}  and Lemma \ref{LD2}, the proof follows {along} the {lines of the proof of Theorem \ref{T3}.}
\subsection{Proof of Theorem \ref{T2}} 
{Since $\nu\in M^{q}_0(\Omega)$ is a non-negative bounded Radon measure on $\Omega$, which is non-singular with respect to Lebesgue measure,
using the same arguments as in the proof of Theorem \ref{T3}, we can find a non-negative function $f\in L^1(\Omega)\setminus\{0\}$ and a sequence of non-negative functions $\{h_n\}_{n\in\mathbb{N}}\subset L^2(\Omega)$ with the same properties found in the proof of Theorem \ref{T3} except that the exponent $p$ there will be replaced with the above exponent $q$ here. Taking this into account, for each $n\in\mathbb{N}$, we consider the following approximation of the equation (\ref{ME}):
\begin{align}\label{BP}
    \begin{cases}
    \mathcal{M}u=\frac{T_n(f)+h_n}{(u+\frac{1}{n})^\delta}+T_n(\mu) \text{ in } \Omega,\\
    u=0 \text{ in }\mathbb{R}^N\setminus\Omega \text{ and } u>0 \text{ in }\Omega,
    \end{cases}
\end{align}
where $\mu\in L^\frac{N(\delta+1)}{N+2\delta}(\Omega)$ is a non-negative function in $\Omega$. Finally, we pass to the limit in the weak formulation of \eqref{BP} to conclude the result. This follows by taking into account Lemma \ref{LD11}, Lemma \ref{BL2}-$(a), (b)$ along with Lemma \ref{BL2}-$(c)$ and proceeding along the lines of the proof of Theorem \ref{T3}.}

\section{Proof of regularity results}
\textbf{Proof of Theorem \ref{T4}:}
\begin{enumerate}
\item[$(i)$] If $r,m>\frac{N}{2}$, then by Lemma \ref{EL1}, Lemma \ref{EL2} and Lemma \ref{EL3}, we conclude that $\{u_n\}_{n\in\mathbb{N}}$ is uniformly bounded in $L^\infty(\Omega).$ Since the solution $u,$ obtained in Theorem \ref{T3} is the pointwise limit of $u_n$, we have $u\in L^\infty(\Omega).$

\item[$(ii)$] If $r>\frac{N}{2},\, 1<m<\frac{N}{2}$, then by  Lemma \ref{EL1}, Lemma \ref{EL2} and Lemma \ref{EL3}, we obtain $$\int_\Omega |u_n|^{m^{**}}\, dx\leq C,$$
{for some constant $C>0$ independent of $n$.} Since $u_n\to u$ pointwise a.e. in $\Omega,$ by Fatou's Lemma, we have
$$\int_\Omega |u|^{m^{**}}\, dx\leq \liminf_{n\to\infty}\int_\Omega|u_n|^{m^{**}}\, dx\leq C,$$
for some constant $C>0$ independent of $n$.
Hence, $u\in L^{m^{**}}(\Omega).$
\end{enumerate}
Proofs of part $(iii)$-$(iv)$ follows in a similar way by taking into account Lemma \ref{EL1}, Lemma \ref{EL2} and Lemma \ref{EL3}.\\ 
\textbf{Proof of Theorem \ref{T6}:} The proof follows from Lemma
 \ref{EL1}, Lemma \ref{EL2} and Lemma \ref{EL4}.\\
\textbf{Proof of Theorem \ref{T7}:} The proof follows from Lemma
 \ref{EL1}, Lemma \ref{EL2} and Lemma \ref{EL5}.

\section{Appendix}
\subsection{Approximate problem}
Throughout this subsection, we assume that {$\delta:\overline{\Omega}\to(0,\infty)$} is a continuous function. Let $n\in\mathbb{N}$ and we define $T_n(f(x))=\min\{f(x),n\}$, where $f\in L^1(\Omega)\setminus\{0\}$ is a non-negative function in $\Omega$. Further, we assume that $h_n,\,g_n\in L^2(\Omega)$ are non-negative functions in $\Omega$. Then for each fixed $n\in\mathbb{N}$, we consider the following approximated problem
\begin{align}\label{APE1}
    \begin{cases}
        \mathcal{M}u=\frac{T_n(f)+h_n}{(u+\frac{1}{n})^{\delta(x)}}+g_n \text{ in }\Omega,\\
        u=0 \text{ in }\mathbb{R}^N\setminus\Omega \text{ and } u>0 \text{ in }\Omega.
    \end{cases}
\end{align}

\begin{Lemma}\label{LD11}
    Let $n\in\mathbb{N}$. Then there exists a {unique} weak solution $u_n\in W^{1,2}_0(\Omega)$ of the equation (\ref{APE1}). Moreover, for every $\omega\Subset\Omega$, there exists a constant $C(\omega)>0$ (independent of $n$) such that $u_n\geq C(\omega)$ in $\omega$ for all $n$.
\end{Lemma}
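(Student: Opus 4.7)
Since $n$ is fixed, the singularity is tamed: for any measurable $v\ge 0$,
\[
F_v(x):=\frac{T_n(f)(x)+h_n(x)}{(v(x)+\tfrac{1}{n})^{\delta(x)}}+g_n(x)\le n^{\max_{\overline\Omega}\delta}\bigl(T_n(f)+h_n\bigr)+g_n\in L^2(\Omega),
\]
uniformly in $v$, so the right-hand side behaves like an ordinary $L^2$-source. The plan is to obtain $u_n$ by Schauder's fixed point theorem, deduce uniqueness and non-negativity by suitable test-function choices, and then establish the $n$-uniform positivity on compact subsets by comparison with a non-singular auxiliary problem.

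Define $T\colon L^2(\Omega)\to L^2(\Omega)$ by $T(v)=w$, where $w\in W_0^{1,2}(\Omega)$ is the unique weak solution of the linear mixed problem $\mathcal{M}w=F_{|v|}$ in $\Omega$ with $w=0$ in $\mathbb{R}^N\setminus\Omega$. Existence and uniqueness of $w$ follow from the Lax--Milgram theorem applied to the continuous, coercive bilinear form
\[
\mathcal{E}(w,\phi)=\int_\Omega A(x)\nabla w\cdot\nabla\phi\,dx+\int_{\mathbb{R}^N}\!\int_{\mathbb{R}^N}(w(x)-w(y))(\phi(x)-\phi(y))K(x,y)\,dx\,dy
\]
on $W_0^{1,2}(\Omega)$, with coercivity coming from \eqref{lkernel}, \eqref{nkernel} and the norm equivalence in Lemma \ref{locnon1}. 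Testing with $w$ yields $\|w\|_{W_0^{1,2}(\Omega)}\le C_n$ uniformly in $v$, so $T$ maps $L^2(\Omega)$ into a bounded subset of $W_0^{1,2}(\Omega)$; compactness of $T\colon L^2\to L^2$ then follows from Lemma \ref{emb}, and continuity by a routine dominated-convergence argument on $F_{|v_k|}$. Schauder's theorem on a sufficiently large closed ball of $L^2(\Omega)$ produces a fixed point $u_n\in W_0^{1,2}(\Omega)$. Uniqueness for fixed $n$ is a consequence of the strict monotonicity of $t\mapsto (t+\tfrac{1}{n})^{-\delta(x)}$: subtracting the equations for two solutions and testing with $(u_n-\tilde u_n)^+$ (and then with $(u_n-\tilde u_n)^-$) forces $u_n\equiv\tilde u_n$.

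Non-negativity is obtained by testing with $u_n^-\in W_0^{1,2}(\Omega)$: the local term contributes $-\alpha\int|\nabla u_n^-|^2$, the nonlocal term is non-positive via the pointwise inequality $(u_n(x)-u_n(y))(u_n^-(x)-u_n^-(y))\le-(u_n^-(x)-u_n^-(y))^2$, while the right-hand side is non-negative, which forces $u_n^-\equiv 0$.

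The main obstacle, and the step requiring most care, is the $n$-independent lower bound $u_n\ge C(\omega)>0$ on every $\omega\Subset\Omega$. Since $T_n(f)\ge T_1(f)$, $h_n,g_n\ge 0$, and $(u_n+\tfrac{1}{n})^{\delta(x)}\le (u_n+1)^{\delta(x)}$ for $n\ge 1$, the fixed point satisfies
\[
\mathcal{M}u_n\ge \frac{T_1(f)}{(u_n+1)^{\delta(x)}}\quad\text{in }\Omega.
\]
Let $\underline{u}\in W_0^{1,2}(\Omega)$ be the unique non-negative weak solution of the non-singular auxiliary problem $\mathcal{M}\underline{u}=T_1(f)/(\underline{u}+1)^{\delta(x)}$ in $\Omega$ with zero exterior datum, produced by the same Schauder scheme (with globally bounded right-hand side) and uniqueness by monotonicity. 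Since $t\mapsto (t+1)^{-\delta(x)}$ is decreasing, testing the weak inequality
\[
\mathcal{M}(\underline{u}-u_n)\le T_1(f)\bigl[(\underline{u}+1)^{-\delta(x)}-(u_n+1)^{-\delta(x)}\bigr]
\]
against $(\underline{u}-u_n)^+\in W_0^{1,2}(\Omega)$ makes the right-hand side non-positive on $\{\underline{u}>u_n\}$, while the left-hand side dominates $\alpha\int|\nabla(\underline{u}-u_n)^+|^2$ plus a non-negative nonlocal contribution, whence $u_n\ge \underline{u}$ a.e.\ in $\Omega$. Finally, since $T_1(f)\not\equiv 0$, the strong minimum principle for $\mathcal{M}$ (available for the mixed operator, see \cite{GKtams} and references therein) yields $\underline{u}>0$ in $\Omega$; continuity then gives $\underline{u}\ge C(\omega)>0$ on every $\omega\Subset\Omega$, completing the $n$-uniform lower bound.
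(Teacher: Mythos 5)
Your proposal follows essentially the same strategy as the paper: Lax--Milgram for the frozen linear problem, Schauder's fixed-point theorem on $L^2(\Omega)$, uniqueness and non-negativity by monotone test functions, and comparison with a non-singular barrier to obtain $n$-uniform interior positivity. The one place where you depart from the paper is the final step. The paper cites \cite[Lemma 4.6]{GKK} to produce the barrier $w$ solving $\mathcal{M}w=T_1(f)/(w+1)^{\delta(x)}$ and, crucially, to get the uniform bound $w\ge C(\omega)>0$ on every $\omega\Subset\Omega$ directly from that lemma; it also cites \cite[Theorem 3.10]{GKK} for $u_n>0$. You instead build $\underline{u}$ from scratch via the same Schauder scheme (fine, since the right-hand side is globally bounded) and then invoke a ``strong minimum principle for $\mathcal{M}$'' together with \emph{continuity} of $\underline{u}$ to deduce $\essinf_\omega\underline{u}>0$. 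That last step is underjustified as written: $\underline{u}$ is a priori only a $W^{1,2}_0(\Omega)$ weak solution, so pointwise positivity on a set of full measure plus ``continuity'' requires an interior regularity theorem for the mixed operator that you do not explicitly invoke; alternatively, one needs a version of the minimum principle (or a Harnack inequality, available for these operators, e.g.\ in \cite{GKtams, Valdinocicpde}) that directly outputs a positive essential infimum on compact subsets for nontrivial nonnegative supersolutions. This is exactly the content the paper delegates to \cite[Lemma 4.6]{GKK}, so you should replace the appeal to ``continuity'' with a precise citation of such a result to close the argument. Everything else — the coercivity via \eqref{lkernel}, \eqref{nkernel} and Lemma \ref{locnon1}, the compactness from Lemma \ref{emb}, the monotonicity-based uniqueness, the comparison $\mathcal{M}u_n\ge T_1(f)/(u_n+1)^{\delta(x)}$ and the test with $(\underline{u}-u_n)^+$ — matches the paper's proof.
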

\begin{proof}
\textbf{Existence:} Let $n\in\mathbb{N}$. By the Lax-Milgram theorem {\cite[page 315]{LC}}, we define a map {$\Theta:L^2(\Omega)\to L^2(\Omega)$} by $\Theta v=w$, where $w$ is the unique solution of the following equation:
\begin{align}\label{DP1}
   \begin{cases}
        \mathcal{M}w=\frac{T_n(f)+h_n}{(|v|+\frac{1}{n})^{\delta(x)}}+g_n \text{ in }\Omega,\\
    w=0 \text{ in }\mathbb{R}^N\setminus\Omega.
   \end{cases}
\end{align}
{First, we claim that $\Theta$ is continuous.} {To this end, let $\{v_k\}_{k\in\mathbb{N}}\subset L^2(\Omega)$ be such that $v_k\to v$ strongly in $L^2(\Omega)$. We claim that $w_k:=\Theta v_k$ converges to $w:=\Theta v$ in $L^2(\Omega)$. 
It suffices to show that every subsequence of $\{w_k\}_{k\in\mathbb{N}}$ has a further subsequence that converges to $w$ strongly in $L^2(\Omega).$ Our claim is that $\{w_k\}_{k\in\mathbb{N}}$ has a subsequence that converges to $w$ in $L^2(\Omega)$ and the proof for any other subsequence of $\{w_k\}_{k\in\mathbb{N}}$ is similar. Since $v_k\to v$ strongly in $L^2(\Omega)$, there exists a subsequence of $\{v_k\}_{k\in\mathbb{N}}$, still denoted $\{v_k\}_{k\in\mathbb{N}}$ such that $v_k\to v$ pointwise a.e. in $\Omega.$}
Since the equation (\ref{DP1}) is linear, we have
\begin{align}\label{MeDa}
    \begin{cases}
        \mathcal{M}(w_k-w)=\frac{T_n(f)+h_n}{(|v_k|+\frac{1}{n})^{\delta(x)}}-\frac{T_n(f)+h_n}{(|v|+\frac{1}{n})^{\delta(x)}}\text{ in }\Omega,\\
    w_k-w=0 \text{ in }\mathbb{R}^N\setminus\Omega.
    \end{cases}
\end{align}
We choose $\phi=(w_k-w)$ as test function in the weak formulation of (\ref{MeDa}) and apply (\ref{lkernel}), (\ref{nkernel}) to deduce
\begin{align*}
    \alpha\int_\Omega |\nabla(w_k-w)|^2\, dx&+\Lambda^{-1}\underbrace{\int_{\mathbb{R}^N}\int_{\mathbb{R}^N}\frac{((w_k-w)(x)-(w_k-w)(y))^2}{|x-y|^{N+2s}}\, dxdy}_{\geq 0}\\
    &\leq \int_\Omega \left (\frac{T_n(f)+h_n}{(|v_k|+\frac{1}{n})^{\delta(x)}}-\frac{T_n(f)+h_n}{(|v|+\frac{1}{n})^{\delta(x)}}\right )(w_k-w)\, dx\\
    &\leq \left\lVert\frac{T_n(f)+h_n}{(|v_k|+\frac{1}{n})^{\delta(x)}}-\frac{T_n(f)+h_n}{(|v|+\frac{1}{n})^{\delta(x)}}\right\rVert_{L^2(\Omega)}||w_k-w||_{L^2(\Omega)}.
\end{align*}
Using Lemma \ref{emb} in the above estimate, we get
\begin{align}\label{MeD}
    \lVert w_k-w\rVert_{L^2(\Omega)}\leq C\left\lVert\frac{T_n(f)+h_n}{(|v_k|+\frac{1}{n})^{\delta(x)}}-\frac{T_n(f)+h_n}{(|v|+\frac{1}{n})^{\delta(x)}}\right\rVert_{L^2(\Omega)},
\end{align}
where $C>0$ is a constant independent of $k$. Now since
\begin{align*}
  \left |\frac{T_n(f)+h_n}{(|v_k|+\frac{1}{n})^{\delta(x)}}\right |^2\leq 2\left (||n^{\delta(x)}||_{L^\infty(\Omega)}^2|T_n(f)+h_n|^2\right )\in L^1(\Omega),
\end{align*}
by the Lebesgue's dominated convergence theorem, we obtain $$\lim_{k\to\infty}\frac{T_n(f)+h_n}{(|v_k|+\frac{1}{n})^{\delta(x)}}= \frac{T_n(f)+h_n}{(|v|+\frac{1}{n})^{\delta(x)}}$$ strongly in $L^2(\Omega).$ Hence, (\ref{MeD}) implies $w_k\to w$ strongly in $L^2(\Omega)$ and consequently $\Theta$ is continuous.\\
{Now, we show that $\Theta$ is compact. In this concern,} let $\{v_k\}_{k\in\mathbb{N}}$ be a bounded sequence in $L^2(\Omega)$ and $w_k:=\Theta v_k$. Incorporating $w_k$ as a test function in the weak formulation of (\ref{DP1}) with $(v,w)=(v_k, w_k)$ and applying (\ref{lkernel}), (\ref{nkernel}), one has
\begin{align}\label{RR}
    \alpha\int_\Omega |\nabla w_k|^2\, dx+\Lambda^{-1}\underbrace{\int_{\mathbb{R}^N}\int_{\mathbb{R}^N}\frac{(w_k(x)-w_k(y))^2}{|x-y|^{N+2s}}\, dxdy}_{\geq 0}&\leq \int_\Omega \left (\frac{T_n(f)+h_n}{(|v_k|+\frac{1}{n})^{\delta(x)}}+g_n\right )w_k\, dx\nonumber\\
    &\leq C\lVert T_n(f)+h_n+g_n\rVert_{L^2(\Omega)}\lVert w_k\rVert_{L^2(\Omega)}\nonumber\\
    &\leq C\lVert T_n(f)+h_n+g_n\rVert_{L^2(\Omega)}\lVert \nabla w_k\rVert_{L^2(\Omega)},
\end{align}
{where $C>0$ is a constant independent on $k.$ To obtain the last inequality we have again used Lemma \ref{emb}). The inequality (\ref{RR}) yields}
$$\lVert \nabla w_k\rVert_{L^2(\Omega)}\leq C\lVert T_n(f)+h_n+g_n\rVert_{L^2(\Omega)}, $$
where $C>0$ is a constant independent of $k.$
Thus, {since $h_n,g_n\in L^2(\Omega)$, the above estimates yields that the sequence} $\{w_k\}_{k\in\mathbb{N}}$ is bounded in $W^{1,2}_0(\Omega).$ By {Lemma \ref{emb}}, the sequence $\{w_k\}_{k\in\mathbb{N}}$ has a convergent subsequence in $L^2(\Omega).$ This proves that $\Theta$ is compact. {Using the same argument above, we observe that there exists a radius $R>0$ such that the ball of radius $R$ in $L^2(\Omega)$ is invariant under the mapping $\Theta$. Hence, one can exploit Schauder's fixed point theorem to conclude the existence of a fixed point of the map $\Theta$, i.e., there exists $u_n\in W^{1,2}_0(\Omega)$ such that
\begin{align}\label{A66}
    \begin{cases}
        \mathcal{M}u_n=\frac{T_n(f)+h_n}{(|u_n|+\frac{1}{n})^{\delta(x)}}+g_n \text{ in } \Omega,\\
        u_n=0 \text{ in }\mathbb{R}^N\setminus\Omega.
    \end{cases}
\end{align} }
We put $\phi=-u_n^-$ in the weak formulation of the above equation (\ref{A66}) and obtain $\int_\Omega|\nabla u_n^-|^2\, dx\leq 0$ that gives $u_n^-=0$ in $\Omega$ and hence $u_n\geq 0$ in $\Omega$. {By \cite[Theorem 3.10]{GKK}, we have $u_n>0$ in $\Omega$ and consequently it is a weak solution of the equation (\ref{APE1}).} \\
\textbf{Uniqueness:}
Let $n\in\mathbb{N}$ and suppose $u_n$ and $\Tilde{u}_n$ in $W_0^{1,2}(\Omega)$ are two weak solutions of the equation (\ref{APE1}). Thus, $w=u_n-\Tilde{u}_n$ is a weak solution of the equation
\begin{align}\label{APP}
    \begin{cases}
        \mathcal{M}w=\left (\frac{T_n(f)+h_n}{(u_n+\frac{1}{n})^{\delta(x)}}-\frac{T_n(f)+h_n}{(\Tilde{u}_n+\frac{1}{n})^{\delta(x)}}\right ) \text{ in } \Omega,\\
    w=0 \text{ in } \mathbb{R}^N\setminus\Omega.
    \end{cases}
\end{align}
Choosing $\phi=w^+$ as a test function in the weak formulation of (\ref{APP}) and utilizing (\ref{lkernel}), (\ref{nkernel}) we obtain
\begin{align}
    \alpha \int_\Omega|\nabla w^+|^2\, dx\leq \int_\Omega \left (\frac{T_n(f)+h_n}{(u_n+\frac{1}{n})^{\delta(x)}}-\frac{T_n(f)+h_n}{(\Tilde{u}_n+\frac{1}{n})^{\delta(x)}}\right )w^+\, dx\leq 0,
\end{align}
which reveals that $w^+=0$ in $\Omega$. Consequently, $u_n\leq \Tilde{u}_n$ in $\Omega$. Interchanging the roles of $u_n$ and $\Tilde{u}_n$, we obtain $\Tilde{u}_n\leq u_n$ in $\Omega$. This proves that the equation (\ref{APE1}) admits a unique weak solution $u_n\in W^{1,2}_0(\Omega)$. \\
\textbf{Uniform positivity:}
Let $n\in\mathbb{N}$. To obtain a uniform lower bound of $u_n$, we compare $u_n$ with the solution $w\in W^{1,2}_0(\Omega)$ of the following equation:
\begin{align}\label{A77}
    \begin{cases}
         \mathcal{M}w=\frac{T_1(f)}{(w+1)^{\delta(x)}}\text{ in }\Omega,\\
     w=0 \text{ in }\mathbb{R}^N\setminus\Omega \text{ and } w>0 \text{ in }\Omega.
    \end{cases}
\end{align}
Existence of such function $w$ follows from \cite[Lemma 4.6]{GKK}. By incorporating the test function $\psi=(w-u_n)^+$ in the weak formulations of (\ref{APE1}),(\ref{A77}) and subtracting one from the other, we obtain
\begin{align*}
    \int_\Omega A(x)&\nabla(w-u_n)\cdot\nabla\psi\, dx+\int_{\mathbb{R}^N}\int_{\mathbb{R}^N}((w-u_n)(x)-(w-u_n)(y))(\psi(x)-\psi(y))K(x,y)dx\, dy\nonumber\\
    &=\int_\Omega\frac{T_1(f)\psi}{(w+1)^{\delta(x)}}\, dx
    -\int_\Omega\frac{T_n(f)\psi}{(u_n+\frac{1}{n})^{\delta(x)}}\, dx-\int_\Omega \frac{h_n\psi}{(u_n+\frac{1}{n})^{\delta(x)}}\, dx-\int_\Omega g_n\psi \, dx\nonumber\\
    &\leq \int_\Omega \left ( \frac{1}{(w+1)^{\delta(x)}}-\frac{1}{(u_n+\frac{1}{n})^{\delta(x)}}\right )T_n(f)\psi\, dx\leq 0.
\end{align*}
Due to the property (\ref{lkernel}) and (\ref{nkernel}), we have 
\begin{align*}
    \alpha\int_\Omega|\nabla \psi|^2 dx+\underbrace{\Lambda^{-1}\int_{\mathbb{R}^N}\int_{\mathbb{R}^N}\frac{((w-u_n)(x)-(w-u_n)(y))(\psi(x)-\psi(y))}{|x-y|^{N+2s}}\, dxdy}_{\geq 0}\leq 0,
\end{align*}
which implies
\begin{align*}
    0\leq\int_{\Omega}|\nabla\psi|^2\, dx\leq 0.
\end{align*}
 Thus, $\psi=0$ in $\Omega$ and hence $u_n\geq w$ in $\Omega$. Since $n\in\mathbb{N}$ is arbitrary, this inequality holds for every $n\in\mathbb{N}$. {Thus, from \cite[Lemma 4.6]{GKK}, we can conclude that for every $\omega\Subset\Omega$, there exists a constant $C(\omega)>0$ such that $w\geq C(\omega)$ in $\omega$ and consequently, $u_n\geq C(\omega)$ in $\omega$ for all $n$.}
\end{proof}

\subsection{A priori estimates for {Theorem \ref{T3} and Theorem \ref{T1}}}
The following uniform boundeness results will be useful for the proofs of Theorem \ref{T3} and Theorem \ref{T1}.
\begin{Lemma}\label{LD2}(Uniform boundedness)
    Assume that the function $\delta:\overline{\Omega}\to (0,\infty)$ is continuous which satisfies the condition $(P_{\epsilon,\delta_*})$ for some $\delta_*\geq 1$ and {for some }$\epsilon>0$. Let $n\in\mathbb{N}$ and assume that the solution of (\ref{APE1}) obtained in Lemma \ref{LD11} is denoted by $u_n$. If the sequences of non-negative functions $\{g_n\}_{n\in\mathbb{N}},\{h_n\}_{n\in\mathbb{N}}\subset L^2(\Omega)$ in $\Omega$ are uniformly bounded in $L^1(\Omega)$, then the following conclusions hold:
    \begin{enumerate}
        \item[(a)] If $\delta_*=1$, then the sequence $\{u_n\}_{n\in \mathbb{N}}$ is uniformly bounded in $W^{1,q}_0(\Omega)$ for every $1<q<\frac{N}{N-1}$.
        \item[(b)] If $\delta_*>1$, then the sequence $\{u_n\}_{n\in \mathbb{N}}$ is uniformly bounded in $W^{1,q}_{\mathrm{loc}}(\Omega)\cap L^1(\Omega)$ for every $1<q<\frac{N}{N-1}$. {Moreover, the sequences $\{G_k(u_n)\}_{n\in\mathbb{N}}$, $\{T_k(u_n)\}_{n\in\mathbb{N}}$ and $\Big\{T_k^\frac{\delta_*+1}{2}(u_n)\Big\}_{n\in\mathbb{N}}$ are uniformly bounded in $W^{1,q}_0(\Omega)$, $W^{1,2}_{\mathrm{loc}}(\Omega)$ and $W^{1,2}_0(\Omega)$ respectively for every fixed $k>0$ and $1<q<\frac{N}{N-1}$.}
    \end{enumerate}
\end{Lemma}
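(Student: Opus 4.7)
The plan is to derive the uniform estimates by testing the weak formulation of \eqref{APE1} with well-chosen nondecreasing functions of $u_n$, discarding the nonlocal term via the pointwise monotonicity $(a-b)(\Phi(a)-\Phi(b))\ge 0$, and reducing everything to estimates on the local gradient. The condition $(P_{\epsilon,\delta_*})$ will be exploited through the decomposition $\Omega=\Omega_\epsilon\cup(\Omega\setminus\Omega_\epsilon)$: on $\Omega_\epsilon$ we use $\delta(x)\le\delta_*$, while on the compactly contained complement $\Omega\setminus\Omega_\epsilon$ we use the uniform positivity of $u_n$ from Lemma \ref{LD11}. Throughout, I set $r:=\min_{\overline\Omega}\delta>0$.

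\textbf{Part (a) [$\delta_*=1$].} Following the Boccardo--Orsina/Oliva--Petitta scheme for singular $L^1$-like data, I would test with $\phi_\alpha:=1-(1+u_n)^{-\alpha}$ for $\alpha\in(0,1)$, which lies in $W^{1,2}_0(\Omega)\cap L^\infty(\Omega)$ with $\phi_\alpha\le 1$. The local part of the LHS yields $\alpha\int_\Omega A(x)\nabla u_n\cdot\nabla u_n(1+u_n)^{-1-\alpha}\,dx$ plus a non-negative nonlocal contribution that is discarded, while the singular quotient times $\phi_\alpha$ is bounded: on $\Omega_\epsilon$ by the combination $\delta\le 1$ and $\phi_\alpha\le 1$, and on $\Omega\setminus\Omega_\epsilon$ by the uniform positive lower bound on $u_n$. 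Combined with the $L^1$-bounds on $h_n,g_n$, this gives
\[
\int_\Omega \frac{|\nabla u_n|^2}{(1+u_n)^{1+\alpha}}\,dx \le C(\alpha),
\]
uniformly in $n$, and the standard Boccardo--Gallouët Marcinkiewicz argument applied on level sets $\{u_n\le k\}$ and $\{u_n>k\}$ then produces the uniform $W^{1,q}_0(\Omega)$ bound for every $1<q<N/(N-1)$.

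\textbf{Part (b) [$\delta_*>1$].} I split via $u_n=T_k(u_n)+G_k(u_n)$ with $k\ge 1$. Testing with $G_k(u_n)$ and noting that on $\{u_n>k\}$ the singular quotient is bounded by $k^{-r}(T_n(f)+h_n)$, the same Boccardo--Gallouët argument gives a uniform $W^{1,q}_0(\Omega)$-bound for $\{G_k(u_n)\}$. Testing with $T_k^{\delta_*}(u_n)\in W^{1,2}_0(\Omega)\cap L^\infty(\Omega)$, the nonlocal term is discarded by monotonicity and the local term produces
\[
\frac{4\delta_*\alpha}{(\delta_*+1)^2}\int_\Omega\bigl|\nabla T_k^{(\delta_*+1)/2}(u_n)\bigr|^2\,dx.
\]
The RHS is estimated via the elementary inequality $T_k^{\delta_*}(u_n)\le(u_n+\tfrac1n)^{\delta_*}$: on $\Omega_\epsilon$ this gives $T_k^{\delta_*}(u_n)/(u_n+\tfrac1n)^{\delta(x)}\le(u_n+\tfrac1n)^{\delta_*-\delta(x)}\le\max\{1,(k+1)^{\delta_*-r}\}$ on $\{u_n\le k\}$ (here $(P_{\epsilon,\delta_*})$ is crucial) and $\le k^{\delta_*-r}$ on $\{u_n>k\}$; on $\Omega\setminus\Omega_\epsilon$ the uniform lower bound on $u_n$ makes the whole singular quotient bounded. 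Together with the $L^1$-bounds on $h_n,g_n$, this yields the uniform $W^{1,2}_0(\Omega)$-bound for $\{T_k^{(\delta_*+1)/2}(u_n)\}$. The uniform positivity $u_n\ge C(\omega)$ on compacta converts this into the $W^{1,2}_{\mathrm{loc}}(\Omega)$-bound for $\{T_k(u_n)\}$; adding the $W^{1,q}_0$-bound on $G_k(u_n)$ gives the $W^{1,q}_{\mathrm{loc}}(\Omega)$-bound for $\{u_n\}$, and Sobolev embedding applied to $G_k(u_n)$ plus $T_k(u_n)\le k$ yields $\{u_n\}\subset L^1(\Omega)$ uniformly.

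\textbf{Main obstacle.} The key technical point is the selection of $T_k^{\delta_*}(u_n)$ as a test function in part (b): the variable exponent $\delta(x)$ forbids any clean algebraic factorization, so the factor $(u_n+\tfrac1n)^{\delta_*-\delta(x)}$ must be controlled by separating $\Omega$ into $\Omega_\epsilon$ (where $(P_{\epsilon,\delta_*})$ gives $\delta\le\delta_*$) and the interior (where $u_n$ is uniformly positive). The coupling of these two regional estimates, though individually straightforward, must be matched consistently across $\partial\Omega_\epsilon$; this interplay, together with the discipline of choosing only nondecreasing test functions so that the nonlocal operator can be dropped without further analysis, is the technical heart of the argument.
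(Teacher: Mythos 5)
Your plan is correct, and part (b) follows the paper's own route closely, but part (a) uses a genuinely different test function. The paper proves (a) by testing with $T_l(u_n)$, obtaining $\int_\Omega|\nabla T_l(u_n)|^2\,dx\le C(l+1)$, and then running the Boccardo--Gallou\"et level-set/Marcinkiewicz argument directly, which produces the sharp decay $|\{|\nabla u_n|\ge t\}|\le Ct^{-N/(N-1)}$. You instead test with $\phi_\alpha=1-(1+u_n)^{-\alpha}$ to get the weighted bound $\int_\Omega|\nabla u_n|^2(1+u_n)^{-1-\alpha}\,dx\le C(\alpha)$. This is also a legitimate starting point, but it is worth noting that when one then restricts to $\{u_n\le l\}$ the bound degrades to $C(1+l)^{1+\alpha}$ rather than $C(1+l)$, so the Marcinkiewicz exponent one obtains is $\frac{2N}{N+(1+\alpha)(N-2)}$, which only approaches $\frac{N}{N-1}$ as $\alpha\downarrow 0$; you must then let $\alpha$ shrink to reach every $q<\frac{N}{N-1}$. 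Alternatively the weighted bound feeds a H\"older/Sobolev bootstrap. Either way the conclusion is the same, but the paper's $T_l$-based estimate is cleaner because it hits the right exponent in one stroke. In part (b) you are essentially reproducing the paper's argument (test with $T_k^{\delta_*}(u_n)$, split $\Omega$ into $\Omega_\epsilon$ and its complement, use $\delta\le\delta_*$ on $\Omega_\epsilon$ and the interior uniform positivity elsewhere, then piece $u_n=T_k(u_n)+G_k(u_n)$). One imprecision to flag: you write ``testing with $G_k(u_n)$'' for the Marcinkiewicz bound on $\{G_k(u_n)\}$, but $G_k(u_n)$ is not uniformly in $L^\infty(\Omega)$, so that test function alone gives a right-hand side that depends on the very quantity you wish to bound. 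As the paper does, one must test with $T_l(G_k(u_n))$ and run the level-set optimization in $l$; since you immediately invoke ``the same Boccardo--Gallou\"et argument'' (which is truncation-based) this reads as shorthand rather than a genuine gap, but the truncation should be made explicit.
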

\begin{proof}
\begin{enumerate}
\item[$(a)$] We assume $\delta_*=1,$ which means that there exists $\epsilon>0$ such that $0<\delta(x)\leq 1 \text{ for all }x\in\Omega_\epsilon.$ We shall prove that $\{u_n\}_{n\in\mathbb{N}}$ is uniformly bounded in $W^{1,q}_0(\Omega)$ for every $1<q<\frac{N}{N-1}$. Due to the fact (\ref{maremb}), it is sufficient to prove that $\{u_n\}_{n\in\mathbb{N}}$ is bounded in {$M^\frac{N}{N-1}(\Omega).$} Since $\Omega$ is bounded, it is enough to estimate the measure, $|x\in\Omega:\{|\nabla u_n|\geq t\}|$ for all $t\geq 1$. For any $l,t\geq 1$, we observe that
\begin{align}\label{KZ}
    |\{ x\in\Omega: |\nabla u_n|\geq t\}|\leq \underbrace{|\{x\in\Omega: |\nabla u_n|\geq t, u_n\leq l\}|}_{=I_2}+\underbrace{|\{x\in\Omega: |\nabla u_n|\geq t, u_n\geq l\}|}_{=I_1}.
\end{align}
\textbf{Estimate of $I_1$:}
For $l\geq 1$, using $\phi=T_l(u_n)$ as a test function in the weak formulation of the equation $(\ref{APE1})$ along with (\ref{lkernel}) and (\ref{nkernel}), we deduce
\begin{align}\label{AT9}
    \alpha\int_\Omega |\nabla T_l(u_n)|^2\, dx+&\Lambda^{-1}\underbrace{\int_{\mathbb{R}^N}\int_{\mathbb{R}^N} \frac{(u_n(x)-u_n(y))(T_l(u_n(x))-T_l(u_n(y)))}{|x-y|^{N+2s}}\, dxdy}_{\geq 0}\nonumber\\
    &\leq \int_\Omega\frac{T_n(f)+h_n}{(u_n+\frac{1}{n})^{\delta(x)}}T_l(u_n)\, dx+\int_\Omega T_l(u_n)g_n\, dx\nonumber\\
    &\leq \underbrace{\int_\Omega\frac{T_n(f)+h_n}{(u_n+\frac{1}{n})^{\delta(x)}}T_l(u_n)\, dx}_{=J_1}+l||g_n||_{L^1(\Omega)}.
\end{align}
We observe that, since $T_l(s)$ is an increasing function in $s$, the nonlocal integral above become non-negative.
Using the condition $(P_{\epsilon,\delta_*})$ and by Lemma \ref{LD11}, the fact $u_n\geq C$ in $\Omega\cap\Omega_\epsilon^c$ for all $n\in\mathbb{N}$, where $C>0$ is a constant independent of $n$, one has
\begin{align}\label{AT10}
J_1&=\int_{\Omega\cap\Omega_\epsilon^c}\frac{T_n(f)+h_n}{(u_n+\frac{1}{n})^{\delta(x)}}T_l(u_n)\, dx+\int_{\{x\in\Omega_\epsilon:u_n(x)\geq 1\}}\frac{T_n(f)+h_n}{(u_n+\frac{1}{n})^{\delta(x)}}T_l(u_n)\, dx\nonumber\\
    &+ \int_{\{x\in\Omega_\epsilon:u_n(x)< 1\}}\frac{T_n(f)+h_n}{(u_n+\frac{1}{n})^{\delta(x)}}T_l(u_n)\, dx\nonumber\\
    &\leq l||C^{-\delta(x)}||_{L^\infty(\Omega)} \int_{\Omega} (T_n(f)+h_n)\, dx +\int_{\{x\in\Omega_\epsilon:u_n(x)\geq 1\}}(T_n(f)+h_n)T_l(u_n)\, dx\nonumber\\
    &+\int_{\{x\in\Omega_\epsilon:u_n(x)< 1\}}(T_n(f)+h_n)\Big(u_n+\frac{1}{n}\Big)^{1-\delta(x)}\, dx\nonumber\\
    &\leq  l||C^{-\delta(x)}||_{L^\infty(\Omega)} \int_\Omega (f+h_n)\, dx+l\int_{\{x\in\Omega_\epsilon:u_n(x)\geq 1\}} (f+h_n)\, dx\nonumber\\
    &+||2^{1-\delta(x)}||_{L^\infty(\Omega)}\int_{\{x\in\Omega_\epsilon:u_n(x)< 1\}} (f+h_n)\, dx\nonumber\\
    &\leq C(l+1),
\end{align}
where $C>0$ is a constant independent of $n$.
To obtain the last inequality above, we have used the uniform $L^1$ bound of $h_n$. It follows from (\ref{AT9}) and (\ref{AT10}) along with the uniform $L^1$ bound of $g_n$ that 
\begin{align}\label{AT12}
    \int_\Omega |\nabla T_l(u_n)|^2\, dx\leq C(l+1),
\end{align}
where $C>0$ is a constant independent of $n$.
By Lemma \ref{emb}, one has
\begin{align*}
    \left (\int_{\{x\in\Omega: u_n(x)\geq l\}} |T_l{(u_n)}|^{2^*}\, dx\right )^\frac{2}{2^*}\leq C(l+1),
\end{align*}
where {$2^*=\frac{2N}{N-2}$}. Here $C>0$ is a constant independent of $n$. This yields 
\begin{align*}
    |\{x\in\Omega: u_n(x)\geq l\}|^\frac{2}{2^*}\leq \frac{C}{l}\Big(1+\frac{1}{l}\Big)\leq \frac{C}{l}.
\end{align*}
Thus, there exists a constant $C>0$ independent of $n$ such that
\begin{align}\label{ST2}
   I_1\leq \left |\{x\in\Omega: u_n(x)\geq l\}\right |\leq \frac{C}{l^{\frac{N}{N-2}}}, \text{ for all }l\geq 1.
\end{align}
\textbf{Estimate of $I_2$:} For any $t\geq 1$ and $l\geq 1$, we have
\begin{align}\label{AT14}
    I_2&=|\{x\in\Omega: |\nabla u_n|\geq t, u_n\leq l\}|\leq \frac{1}{t^2}\int_{\{x\in\Omega:u_n\leq l\}}|\nabla u_n|^2\, dx\nonumber\\
    &\leq \frac{1}{t^2}\int_\Omega |\nabla T_l(u_n)|^2\, dx\leq \frac{Cl}{t^2}\Big(1+\frac{1}{l}\Big)\leq \frac{Cl}{t^2},
\end{align}
{where $C>0$ is a constant independent of $n$. To obtain the second last inequality we have used (\ref{AT12}). Thus, combining (\ref{ST2}) and (\ref{AT14}), we conclude from (\ref{KZ}) that there exists a constant $C>0$ independent of $n$ such that}
\begin{align}\label{AT15}
    |\{x\in\Omega:|\nabla u_n|\geq t\}|\leq C\Big(\frac{1}{l^\frac{N}{N-2}}+\frac{l}{t^2}\Big), \text{ for all } l,t\geq 1.
\end{align}
    In particular, choosing $l=t^\frac{N-2}{N-1}$ in (\ref{AT15}) gives that
    \begin{align}\label{embnew}
        |\{x\in\Omega:|\nabla u_n|\geq t\}|\leq \frac{C}{t^\frac{N}{N-1}} \text { for all } t\geq 1,
    \end{align}
    for some constant $C>0$ independent of $n$.
  Thus {using the embedding \eqref{maremb} in \eqref{embnew}, it follows that} the sequence $\big\{|\nabla u_n|\big\}_{n\in\mathbb{N}}$ is uniformly bounded in $L^q(\Omega)$ for every $1< q<\frac{N}{N-1}$ and consequently, the sequence $\{u_n\}_{n\in\mathbb{N}}$ is uniformly bounded in $W^{1,q}_0(\Omega)$ for every $1<q<\frac{N}{N-1}$.
  
\item[$(b)$] We assume there exist $\delta_*>1$ and $\epsilon>0$ such that $0<\delta(x)\leq \delta_* \text{ for all } x\in\Omega_\epsilon.$\\
\textbf{Claim 1:} For every fixed $k>0$, the sequence $\{G_k(u_n)\}_{n\in\mathbb{N}}$ is uniformly bounded in $W^{1,q}_0(\Omega)$ for every $1<q<\frac{N}{N-1}.$

To this end, similar to part $(a)$ above, we estimate the measure $|\{x\in\Omega:|\nabla G_k(u_n)|\geq t\}|$ for all $t\geq 1.$ Let $l>0$. Then, we observe that 
\begin{align}\label{ineq1}
    &|\{x\in\Omega:\nabla G_k(u_n)|\geq t\}|\\
    &\leq \underbrace{|\{x\in\Omega:|\nabla G_k(u_n)|\geq t, G_k(u_n)\leq l\}}_{I_2}|+\underbrace{|\{x\in\Omega:|\nabla G_k(u_n)|\geq t, G_k(u_n)\geq l\}|}_{I_1}.
\end{align}
We estimate $I_1$ and $I_2$ using the same approach as in part $(a)$ above.\\
\textbf{ Estimate of $I_1$:}
By taking the test function $\phi=T_l(G_k(u_n))$ in the weak formulation of (\ref{APE1}), we obtain
\begin{align}\label{cl}
    \int_\Omega A(x)\nabla u_n\cdot \nabla T_l(G_k(u_n)) \, dx&+\underbrace{\int_{\mathbb{R}^N}\int_{\mathbb{R}^N}(u_n(x)-u_n(y))(\phi(x)-\phi(y)K(x,y)\, dxdy}_{\geq 0}\nonumber\\
    &=\int_\Omega \frac{(T_n(f)+h_n)}{(u_n+\frac{1}{n})^{\delta(x)}}T_l(G_k(u_n))\, dx+\int_\Omega g_nT_l(G_k(u_n))\, dx\nonumber\\
    &\leq l\left\lVert \frac{1}{k^{\delta(x)}}\right\rVert_{L^\infty(\Omega)}\int_\Omega (T_n(f)+h_n)\, dx+l\int_\Omega g_n\nonumber\\
    &\leq Cl(||f||_{L^1(\Omega)}+||h_n||_{L^1(\Omega)}+||g_n||_{L^1(\Omega)})\leq Cl,
\end{align}
where the constant $C>0$ is independent of $n.$ We observe that, since $T_l(s)$ is an increasing function in $s$, the nonlocal integral above becomes non-negative. We have used the fact that $u_n\geq k$ in $\mathrm{supp}\,T_l(G_k(u_n))$ to obtain the first inequality in \eqref{cl} above. Then the above inequality \eqref{cl} together with (\ref{lkernel}) leads to
\begin{align}\label{W4}
  \int_\Omega |\nabla T_l(G_k(u_n))|^2\, dx\leq Cl,
\end{align}
where $C>0$ is a constant independent of $n.$ Using Lemma \ref{emb}, we deduce
\begin{align*}
    |\{x\in\Omega:G_k(u_n)\geq l\}|^\frac{2}{2^*}\leq \frac{1}{l^2}\left (\int_\Omega |T_l(G_k(u_n))|^{2^*}\, dx\right )^\frac{2}{2^*}\leq \frac{1}{l^2}\int_\Omega |\nabla T_l(G_k(u_n))|^2\, dx\leq\frac{C}{l},
\end{align*}
that is,
\begin{align*}
    |\{x\in\Omega:G_k(u_n)\geq l\}|\leq \frac{C}{l^\frac{N}{N-2}},
\end{align*}
for some constant $C>0$ independent of $n$, where {$2^*=\frac{2N}{N-2}$}. Hence, 
\begin{align}\label{S4}
    I_1=|\{x\in\Omega:|\nabla G_k(u_n)|\geq t, G_k(u_n)\geq l\}|\leq \frac{C}{l^\frac{N}{N-2}},
\end{align}
for some constant $C>0$ independent of $n$.\\
\textbf{Estimate of $I_2$:}
Using the inequality (\ref{W4}), we obtain
\begin{align}\label{S5}
    I_2=|\{x\in\Omega:|\nabla G_k(u_n)|\geq t, G_k(u_n)\leq l\}|&\leq \frac{1}{t^2}\int_{\{x\in\Omega:G_k(u_n)\leq l\}}|\nabla G_k(u_n)|^2\, dx\\
    &\leq \frac{1}{t^2}\int_\Omega |\nabla T_l(G_k(u_n))|^2\, dx\nonumber\leq\frac{Cl}{t^2},
\end{align}
where $C>0$ is a constant independent of $n$.

For $t\geq 1$, choosing $l=t^\frac{N-2}{N-1}$ in (\ref{S4}) and (\ref{S5}) and using these two inequalities in (\ref{ineq1}), we get
\begin{align*}
    \left |\{x\in\Omega:|\nabla G_k(u_n)|\geq t\}\right |\leq \frac{C}{t^\frac{N}{N-1}} ,
\end{align*}
for some constant $C>0$ independent of $n$. This completes the proof of Claim 1.\\
\textbf{Claim 2:} For every fixed $k>0$, the sequences $\{T_k(u_n)\}_{n\in\mathbb{N}}$ and $\Big\{T_k(u_n)^\frac{\delta_*+1}{2}\Big\}_{n\in\mathbb{N}}$ are uniformly bounded in $W^{1,2}_{\mathrm{loc}}(\Omega)$ and $W^{1,2}_{0}(\Omega)$ respectively.

By choosing $\phi=T^{\delta_*}_k(u_n)$ into the weak formulation of (\ref{APE1}) and making use of the properties (\ref{lkernel}) and (\ref{nkernel}), we deduce
\begin{align}\label{W2}
    \frac{4\alpha\delta_*}{(\delta_*+1)^2}\int_\Omega \Big|\nabla T^\frac{\delta_*+1}{2}_k(u_n)\Big|^2\, dx&+\Lambda^{-1}\underbrace{\int_{\mathbb{R}^N}\int_{\mathbb{R}^N} \frac{(u_n(x)-u_n(y))(\phi(x)-\phi(y))}{|x-y|^{N+2s}}\, dxdy}_{\geq 0}\nonumber\\
    &\leq \int_\Omega \frac{T_n(f)+h_n}{(u_n+\frac{1}{n})^{\delta(x)}}T^{\delta_*}_k(u_n)\, dx+\int_\Omega T^{\delta_*}_k(u_n) g_n\, dx.\nonumber\\
    &\leq \int_\Omega \frac{T_n(f)+h_n}{(u_n+\frac{1}{n})^{\delta(x)}}T_k^{\delta_*}(u_n)\, dx+k^{\delta_*}\int_\Omega g_n\, dx.
\end{align}
Using the condition $(P_{\epsilon,\delta_*})$ and by Lemma \ref{LD11}, the fact $u_n\geq C$ in $\Omega\cap\Omega_\epsilon^c$ for all $n\in\mathbb{N}$, where $C>0$ is a constant independent of $n$, we obtain
\begin{align}\label{TB}
    \int_\Omega &\frac{T_n(f)+h_n}{(u_n+\frac{1}{n})^{\delta(x)}}T_k^{\delta_*}(u_n)\, dx=\int_{\Omega\cap\Omega_\epsilon^c}\frac{T_n(f)+h_n}{(u_n+\frac{1}{n})^{\delta(x)}}T_k^{\delta_*}(u_n)\, dx\nonumber\\
    &+\int_{\{x\in\Omega_\epsilon:u_n(x)\geq 1\}}\frac{T_n(f)+h_n}{(u_n+\frac{1}{n})^{\delta(x)}}T_k^{\delta_*}(u_n)\, dx+ \int_{\{x\in\Omega_\epsilon:u_n(x)< 1\}}\frac{T_n(f)+h_n}{(u_n+\frac{1}{n})^{\delta(x)}}T_k^{\delta_*}(u_n)\, dx\nonumber\\
    &\leq ||C^{-\delta(x)}||_{L^\infty(\Omega)} k^{\delta_*}\int_{\Omega} (T_n(f)+h_n)\, dx+k^{\delta_*}\int_{\{x\in \Omega_\epsilon:u_n(x)\geq 1\}}(T_n(f)+h_n)\, dx\nonumber\\
    &+\int_{\{x\in\Omega_\epsilon:u_n(x)< 1\}}(T_n(f)+h_n)\Big(u_n+\frac{1}{n}\Big)^{\delta_*-\delta(x)}\, dx\nonumber\\
    &\leq  ||C^{-\delta(x)}||_{L^\infty(\Omega)}k^{\delta_*} \int_\Omega (f+h_n)\, dx+k^{\delta_*}\int_{\{x\in\Omega_\epsilon:u_n(x)\geq 1\}} (f+h_n)\, dx\nonumber\\
    &+||2^{\delta_*-\delta(x)}||_{L^\infty(\Omega)}\int_{\{x\in\Omega_\epsilon:u_n(x)< 1\}} (f+h_n)\, dx\nonumber\\
    &\leq C(k^{\delta_*}+1),
\end{align}
where $C>0$ is a constant independent of $n$. To obtain the last inequality above, we have used the facts $f\in L^1(\Omega)$ and that $\{ h_n\}_{n\in\mathbb{N}}$ is uniformly bounded in ${L^1(\Omega)}.$ Using the estimate \eqref{TB} and uniform $L^1(\Omega)$ bound of $g_n$, (\ref{W2}) yields
\begin{align}\label{clm2}
    \int_\Omega\Big|\nabla T^\frac{\delta_*+1}{2}_k(u_n)\Big|^2\, dx\leq C(k^{\delta_*}+1),
\end{align}
where $C>0$ is a constant independent of $n$. Hence, the sequence $\Big\{T_k(u_n)^\frac{\delta_*+1}{2}\Big\}_{n\in\mathbb{N}}$ is uniformly bounded in $W^{1,2}_{0}(\Omega)$ for every fixed $k>0$. Moreover, since for every $\omega\Subset\Omega$, there exists a constant $C(\omega)>0$ (independent of $n$) such that $u_n\geq C(\omega)$ for all $n$, therefore using \eqref{clm2}, we obtain
\begin{align*}
    C(\omega)^{\delta_*-1}\int_\omega |\nabla T_k(u_n)|^2\, dx\leq {\int_\omega u_n^{\delta_*-1}|\nabla T_k(u_n)|^2\, dx=\int_\Omega T^{\delta_*-1}_k(u_n)|\nabla T_k(u_n)|^2\, dx}\leq C,
\end{align*}
where $C>0$ is a constant independent of $n$.
Hence, $\{T_k(u_n)\}_{n\in\mathbb{N}}$ is uniformly bounded in $W^{1,2}_{\mathrm{loc}}(\Omega).$ This completes the proof of Claim 2.

Finally, since $u_n=T_1(u_n)+G_1(u_n)$, using Claim 1 and Claim 2, it follows that the sequence $\{u_n\}_{n\in\mathbb{N}}$ is uniformly bounded in $W^{1,q}_{\mathrm{loc}}(\Omega)\cap L^1(\Omega)$ for every $1<q<\frac{N}{N-1}$.
\end{enumerate}
\end{proof}

\subsection{A priori estimates for {Theorem \ref{T2}}}
\begin{Lemma}\label{BL2}
Assume that the function $\delta:\overline{\Omega}\to (0,\infty)$ is a constant function. Let $n\in\mathbb{N}$ and define $g_n(x)=T_n(g(x))=\min\{g(x),n\}$ {in \eqref{APE1}}, where $g\in L^{\frac{N(\delta+1)}{N+2\delta}}(\Omega)$ is a non-negative function in $\Omega$. {Suppose that the solution of \eqref{APE1} obtained in Lemma \ref{LD11} is denoted by $u_n$.} Assume that the sequence of non-negative functions $\{h_n\}_{n\in\mathbb{N}}\subset L^2(\Omega)$ in $\Omega$ is uniformly bounded in $L^1(\Omega)$. Then the following conclusions hold:
\begin{enumerate}
    \item[$(a)$] If $\delta=1$, then $\{u_n\}_{n\in\mathbb{N}}$ is uniformly bounded in $W^{1,2}_0(\Omega)$.
    \item[$(b)$] If $0<\delta<1$, then $\{u_n\}_{n\in\mathbb{N}}$ is uniformly bounded in $W^{1,q}_0(\Omega)$, where $q=\frac{N(\delta+1)}{N+\delta-1}$.
     \item[$(c)$] If $\delta>1$, then $\{u_n\}_{n\in\mathbb{N}}$ is uniformly bounded in $W^{1,2}_{\mathrm{loc}}(\Omega)$. Moreover, $\Big\{u^\frac{\delta+1}{2}_n\Big\}_{n\in\mathbb{N}}$ is uniformly bounded in $W^{1,2}_0(\Omega).$
\end{enumerate}
\end{Lemma}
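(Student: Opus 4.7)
My plan is to handle all three cases with one master test-function computation. I take $\phi = (u_n + 1/n)^\delta - (1/n)^\delta$ in the weak formulation of \eqref{APE1} (replacing $u_n$ by a truncation $T_k(u_n)$ if needed to ensure $\phi \in W^{1,2}_0(\Omega) \cap L^\infty(\Omega)$, then letting $k \to \infty$). The monotonicity of $t \mapsto (t + 1/n)^\delta$ forces the nonlocal bilinear form to be non-negative, so using \eqref{lkernel} and the chain rule identity $(u_n + 1/n)^{\delta-1}|\nabla u_n|^2 = \frac{4}{(\delta+1)^2}|\nabla w_n|^2$ with $w_n := (u_n + 1/n)^{(\delta+1)/2}$, I obtain the basic estimate
\begin{equation*}
\frac{4\alpha\delta}{(\delta+1)^2}\int_\Omega |\nabla w_n|^2\,dx \leq \int_\Omega (T_n(f) + h_n)\,dx + \int_\Omega T_n(g)\bigl(u_n + \tfrac{1}{n}\bigr)^\delta\,dx.
\end{equation*}
The first right-hand side is uniformly bounded because $f \in L^1(\Omega)$ and $\{h_n\}$ is bounded in $L^1(\Omega)$.

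To absorb the $g$-term, I note that $w_n - (1/n)^{(\delta+1)/2} \in W^{1,2}_0(\Omega)$ with gradient $\nabla w_n$, so Sobolev embedding gives $\|w_n\|_{L^{2^*}(\Omega)} \leq C(\|\nabla w_n\|_{L^2(\Omega)} + 1)$ with $2^* = 2N/(N-2)$. Since $(u_n + 1/n)^\delta = w_n^{2\delta/(\delta+1)}$, Hölder with the conjugate pair $\bigl(\tfrac{N(\delta+1)}{N+2\delta},\tfrac{N(\delta+1)}{\delta(N-2)}\bigr)$ yields
\begin{equation*}
\int_\Omega T_n(g)\bigl(u_n + \tfrac{1}{n}\bigr)^\delta\,dx \leq \|g\|_{L^{N(\delta+1)/(N+2\delta)}(\Omega)}\,\|w_n\|_{L^{2^*}(\Omega)}^{2\delta/(\delta+1)}.
\end{equation*}
Since $2\delta/(\delta+1) < 2$, Young's inequality absorbs this into the left-hand side, producing the master bound $\int_\Omega |\nabla w_n|^2\,dx \leq C$ uniform in $n$.

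From this master bound parts (a) and (c) are immediate. For $\delta = 1$, $w_n = u_n + 1/n$, so the statement is exactly the uniform $W^{1,2}_0(\Omega)$ bound on $\{u_n\}$. For $\delta > 1$, the master bound is the uniform $W^{1,2}_0(\Omega)$ bound on $u_n^{(\delta+1)/2}$; the local bound on $u_n$ in $W^{1,2}(\omega)$ for $\omega \Subset \Omega$ follows by using the pointwise positivity $u_n \geq C(\omega) > 0$ from Lemma \ref{LD11}, via $\int_\omega |\nabla u_n|^2\,dx \leq C(\omega)^{1-\delta}\int_\omega (u_n + 1/n)^{\delta-1}|\nabla u_n|^2\,dx \leq C$.

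The genuine obstacle is case (b) with $0 < \delta < 1$, since $w_n$ vanishes more slowly than $u_n$ near $\partial\Omega$ and the $W^{1,2}$ bound on $w_n$ does not directly translate to any gradient bound on $u_n$. My strategy is a Hölder interpolation: for $q = N(\delta+1)/(N+\delta-1)$, decompose
\begin{equation*}
\int_\Omega |\nabla u_n|^q\,dx = \int_\Omega \bigl((u_n + \tfrac{1}{n})^{\delta-1}|\nabla u_n|^2\bigr)^{q/2}\,(u_n + \tfrac{1}{n})^{q(1-\delta)/2}\,dx
\end{equation*}
and apply Hölder with exponents $2/q$ and $2/(2-q)$. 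The first factor is a constant multiple of $\int_\Omega |\nabla w_n|^2\,dx \leq C$. The precise choice of $q$ is forced by the algebraic identity $q(1-\delta)/(2-q) = N(\delta+1)/(N-2)$, which makes the second integrand equal to $w_n^{2^*}$, uniformly in $L^1(\Omega)$ by the Sobolev step above. The linchpin of the argument is thus the scaling compatibility among three exponents: the integrability class $L^{N(\delta+1)/(N+2\delta)}$ assumed on $g$, the Sobolev exponent $2^*$ for $w_n$, and the target Lebesgue exponent $q = N(\delta+1)/(N+\delta-1)$ for $|\nabla u_n|$.
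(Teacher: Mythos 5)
Your proposal is correct and follows essentially the same route as the paper: test against a $\delta$-power of the solution, exploit monotonicity to drop the nonlocal term, then combine H\"older's inequality with the Sobolev embedding and absorb the $g$-term by noting that the exponent $2\delta/(\delta+1)$ is strictly below $2$, and finally for $0<\delta<1$ interpolate to recover the gradient bound with the stated exponent $q=\tfrac{N(\delta+1)}{N+\delta-1}$. The only cosmetic difference is that you run all three cases through the single test function $(u_n+1/n)^\delta-(1/n)^\delta$ (truncated when $\delta>1$), whereas the paper treats (a), (b), (c) with the nominally distinct choices $u_n$, $(u_n+\epsilon)^\delta-\epsilon^\delta$, and $T_k^\delta(u_n)$; the resulting estimates and the absorption argument are identical.
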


\begin{proof}
\begin{enumerate}
\item[$(a)$] Choosing $u_n$ as a test function in the weak formulation of (\ref{APE1}) and applying the properties (\ref{lkernel}), (\ref{nkernel}), we get
    \begin{align}\label{RR2}
        \alpha\int_\Omega |\nabla u_n|^2\, dx+&\Lambda^{-1}\underbrace{\int_{\mathbb{R}^N}\int_{\mathbb{R}^N}\frac{(u_n(x)-u_n(y))^2}{|x-y|^{N+2s}}\, dxdy}_{\geq 0}\nonumber\\
        &\leq\int_\Omega\frac{(T_n(f)+h_n)u_n}{(u_n+\frac{1}{n})}\, dx+\int_\Omega g_nu_n\, dx.
    \end{align}
    Using H\"{o}lder's inequality along with Lemma \ref{emb} in (\ref{RR2}), it follows that 
$$\alpha||u_n||_{{W_0^{1,2}(\Omega)}}^2\leq \int_\Omega(f+{h_n})\, dx +\int gu_n\, dx\leq C+||g||_{L^{(2^*)'}(\Omega)}||u_n||_{L^{2^*}(\Omega)}\leq C(1+||u_n||_{{W_0^{1,2}(\Omega)}}),$$
where $C$ is a constant independent of $n$. Hence the sequence $\{u_n\}_{n\in\mathbb{N}}$ is uniformly bounded in $W^{1,2}_0(\Omega).$

\item[$(b)$] If $0<\delta<1$, then for $0<\epsilon<\frac{1}{n}$, choosing $\phi=(u_n+\epsilon)^\delta-\epsilon^\delta$ as a test function in the weak formulation of (\ref{APE1}) and using the properties (\ref{lkernel}), (\ref{nkernel}), we obtain
\begin{align}\label{RR1}
     \alpha\int_\Omega\Big|\nabla (u_n+\epsilon)^{\frac{\delta+1}{2}}\Big|^2\, dx+&\Lambda^{-1}\underbrace{\int_{\mathbb{R}^N}\int_{\mathbb{R}^N}\frac{(u_n(x)-u_n(y))(\phi(x)-\phi(y))}{|x-y|^{N+2s}}\, dxdy}_{\geq 0}\\
     &\leq \int_\Omega\frac{(T_n(f)+h_n)\phi}{(u_n+\frac{1}{n})^\delta}\, dx+\int_\Omega g_n{\phi}\, dx\\
     &\leq \int_\Omega (T_n(f)+h_n)\, dx+\int_\Omega g(u_n+\epsilon)^\delta\, dx\nonumber\\
     &\leq C+\int_\Omega g(u_n+\epsilon)^\delta\, dx,
\end{align}
where $C$ is a constant independent of $n$. Using H\"{o}lder inequality, (\ref{RR1}) yields
\begin{align}\label{BW}
    \int_\Omega\Big|\nabla (u_n+\epsilon)^{\frac{\delta+1}{2}}\Big|^2\, dx\leq C\left [1+\left (\int_\Omega (u_n+\epsilon)^\frac{2^*(\delta+1)}{2}\, dx\right )^\frac{2\delta}{2^*(\delta+1)}||g||_{L^r(\Omega)}\right ],
\end{align}
where $r=\frac{N(\delta+1)}{N+2\delta}$ {and $C>0$ is a constant independent of $n$.} {Here $2^*=\frac{2N}{N-2}$.}
Applying Lemma \ref{emb} in \eqref{BW}, we have
\begin{align}
    \left (\int_\Omega|u_n+\epsilon|^{\frac{2^*(\delta+1)}{2}}\, dx\right )^\frac{2}{2^*}\leq C\left [1+\left (\int_\Omega (u_n+\epsilon)^\frac{2^*(\delta+1)}{2}\, dx\right )^\frac{2\delta}{2^*(\delta+1)}||g||_{L^r(\Omega)}\right ],
\end{align}
where $C$ is a constant independent of $n$.
Hence, there exists a constant $C>0$ independent of $n$ such that 
\begin{align}\label{BM1}
\int_\Omega|u_n+\epsilon|^{\frac{2^*(\delta+1)}{2}}\, dx\leq C.
\end{align}
Using \eqref{BM1} in (\ref{BW}), we arrive at
\begin{align}
    \int_\Omega \frac{|\nabla u_n|^2}{(u_n+\epsilon)^{(1-\delta)}}\, dx\leq C,
\end{align}
where $C$ is a constant independent of $n$.
Now, for any {$1<p<2$}, using H\"{o}lder's inequality, we have
\begin{align}\label{BM2}
    \int_\Omega|\nabla u_n|^p\, dx&\leq \int_\Omega\frac{|\nabla u_n|^p}{(u_n+\epsilon)^\frac{(1-\delta)p}{2}}(u_n+\epsilon)^\frac{(1-\delta)p}{2}\,dx\nonumber\\
    &\leq \left (\int_\Omega \frac{|\nabla u_n|^2}{(u_n+\epsilon)^{(1-\delta)}}\,dx\right )^\frac{p}{2}\left (\int_\Omega (u_n+\epsilon)^\frac{(1-\delta)p}{2-p}\,dx\right )^\frac{2-p}{2}\nonumber\\
    &\leq C\left (\int_\Omega (u_n+\epsilon)^\frac{(1-\delta)p}{2-p}\,dx\right )^\frac{2-p}{2},
\end{align}
where $C$ is a constant independent of $n$. We choose {$1<p<2$} such that $\frac{(1-\delta)p}{2-p}=\frac{2^*(\delta+1)}{2}$, i.e., $p=\frac{N(\delta+1)}{N+\delta-1}=q.$ Note that $1<q<2$, since $N>2$ and $\delta\in(0,1)$. Hence, the proof follows from (\ref{BM1}) and (\ref{BM2}).

\item[$(c)$] {Let $k>0$, then we choose $T_k^\delta(u_n)$ as a test function in the weak formulation of the equation \eqref{APE1} with $\delta>1$ and apply (\ref{lkernel}), (\ref{nkernel}) to obtain
\begin{align*}
\alpha\delta\int_\Omega\Big|\nabla T_k^\frac{\delta+1}{2}(u_n)\Big|^2\, dx+&\Lambda^{-1}\underbrace{\int_{\mathbb{R}^N}\int_{\mathbb{R}^N}\frac{(u_n(x)-u_n(y))(T_k^\delta(u_n(x))-T_k^\delta(u_n(y)))}{|x-y|^{N+2s}}\, dxdy}_{\geq 0}\\
    &\leq \int_\Omega \frac{T_k^\delta(u_n)(T_n(f)+h_n)}{(u_n+\frac{1}{n})^\delta}\, dx\nonumber+\int_\Omega g_nT_k^\delta(u_n)\, dx.
\end{align*}
Using the fact $T_k(s)\leq s$ for every $k,s>0$, the above identity yields
\begin{align}
    \int_\Omega\Big|\nabla T_k^\frac{\delta+1}{2}(u_n)\Big|^2\, dx\leq \int_\Omega (T_n(f)+h_n)\, dx+\int_\Omega gT_k^\delta(u_n)\, dx.
\end{align}
Using H\"{o}lder's inequality and the fact that $||T_n(f)+h_n||_{L^1(\Omega)}$ is uniformly bounded, we have
\begin{align}\label{BM3}
\int_\Omega \Big|\nabla T_k^\frac{\delta+1}{2}(u_n)\Big|^2\, dx\leq C+||g||_{L^r(\Omega)}\left (\int_\Omega |T_k(u_n)|^s\, dx\right )^\frac{\delta}{s},
\end{align}
where $s=\frac{2^*(\delta+1)}{2}$, $r=\frac{N(\delta+1)}{N+2\delta}$ { and $C>0$ is a constant independent of $n,k.$} {Here $2^*=\frac{2N}{N-2}$.} Applying Lemma \ref{emb} in \eqref{BM3}, we obtain
\begin{align}\label{AB}
    \left (\int_\Omega |T_k(u_n)|^s\, dx\right )^\frac{2}{2^*}\leq C\left (1+||g||_{L^r(\Omega)}\left (\int_\Omega |T_k(u_n)|^s\, dx\right )^\frac{\delta}{s}\right ).
\end{align}
Since $\frac{2}{2^*}>\frac{\delta}{s}=\frac{2\delta}{2^*(\delta+1)}$, there exists a constant $C>0$ (independent of $n,k$) such $$\int_\Omega |T_k(u_n)|^s\, dx\leq C\text{ for all }n,k\in\mathbb{N}.$$
Thus by using (\ref{BM3}) and Fatou's lemma, we have
$$\int_\Omega \Big|\nabla u^\frac{\delta+1}{2}_n\Big|^2\, dx\leq \liminf_{k\to\infty}\int_\Omega \Big|\nabla T_k^\frac{\delta+1}{2}(u_n)\Big|^2\, dx\leq C\left (1+||g||_{L^r(\Omega)}\right ),$$
where $C>0$ is a constant independent of $n.$ Hence, the sequence $\Big\{u_n^\frac{\delta+1}{2}\Big\}_{n\in\mathbb{N}}$ is uniformly bounded in $W^{1,2}_0(\Omega).$ 
Now, for every $\omega\Subset\Omega$, there exists $C(\omega)>0$ (independent of $n$) such that $u_n\geq C(\omega)$ for all $n\in\mathbb{N}$. Thus,
\begin{align*}
    \int_\omega |\nabla u_n|^2\, dx\leq C\int_\omega \Big|\nabla u_n^\frac{\delta+1}{2}\Big|^2 u_n^{1-\delta}\, dx \leq \frac{1}{C(\omega)^{\delta-1}}\int_\Omega  \Big|\nabla u_n^\frac{\delta+1}{2}\Big|^2\, dx\leq \Tilde{C}(\omega).
\end{align*}
This completes the proof of the Lemma.}
\end{enumerate}
\end{proof}

\subsection{A priori estimates for regularity results}
In this {subsection}, unless otherwise mentioned, we assume that $\delta:\overline{\Omega}\to(0,\infty)$ is a continuous function and $f\in L^r(\Omega)\setminus\{0\}$ and $g\in L^m(\Omega)$ for some $r,m\geq 1$ are two non-negative functions in $\Omega$.

By Lemma \ref{LD11} and {proceeding along the lines of the proof of {\cite[Lemma 3.1]{Guna}}}, for each fixed $n\in\mathbb{N}$, we have the existence of a unique weak solution $u_n\in W^{1,2}_0(\Omega)\cap L^\infty(\Omega)$ of the equation
\begin{equation}\label{M00}
   \begin{cases}
        \mathcal{M} u_n=\frac{T_n(f)}{(u_n+\frac{1}{n})^{\delta(x)}}+T_n(g) \text{ in }\Omega,\\
    u_n>0 \text{ in } \Omega,\text{ and } u_n=0 \text{ in }\mathbb{R}^N\setminus\Omega.
   \end{cases}
\end{equation}
Moreover, for each fixed $n\in\mathbb{N}$, by \cite[Lemma 3.2]{Guna} there exists a unique weak solution $v_n\in W^{1,2}_0(\Omega)\cap L^\infty(\Omega)$ of the problem
\begin{align}\label{M0}
   \begin{cases}
        \mathcal{M} v_n=\frac{T_n(f)}{(v_n+\frac{1}{n})^{\delta(x)}} \text{ in }\Omega,\\
    v_n>0 \text{ in } \Omega,\text{ and } v_n=0 \text{ in }\mathbb{R}^N\setminus\Omega,
   \end{cases}
\end{align}
 and by \cite[Lemma 3.1]{Guna}, there exists a unique weak solution $w_n\in W^{1,2}_0(\Omega)\cap L^\infty(\Omega)$ solving  
\begin{align}\label{M1}
   \begin{cases}
        \mathcal{M} w_n=T_n(g) \text{ in }\Omega,\\
    w_n>0 \text{ in } \Omega,\text{ and }w_n=0 \text{ in }\mathbb{R}^N\setminus\Omega
   \end{cases}
\end{align}
respectively.
\begin{Lemma}\label{EL1}
   Let $n\in\mathbb{N}$ and assume that $u_n,\, v_n,\, w_n\in W^{1,2}_0(\Omega)$ are solutions of the problem \eqref{M00}, \eqref{M0} and \eqref{M1} respectively. {If $\{w_n\}_{n\in\mathbb{N}}$ and $\{v_n\}_{n\in\mathbb{N}}$ are uniformly bounded in $L^a(\Omega)$ and $L^b(\Omega)$ respectively for some $a,b\geq 1$, then $\{u_n\}_{n\in\mathbb{N}}$ is uniformly bounded in $L^c(\Omega)$, where $c=\min\{a,b\}$.}
\end{Lemma}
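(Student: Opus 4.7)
The strategy is to establish the pointwise chain $0 \le v_n \le u_n \le v_n + w_n$ in $\Omega$ by two applications of a comparison principle for the mixed operator $\mathcal{M}$, and then invoke the triangle inequality in $L^c(\Omega)$ together with the continuous embeddings $L^a(\Omega), L^b(\Omega) \hookrightarrow L^c(\Omega)$ which are available because $\Omega$ is bounded. The comparison-principle arguments will rely on the elementary identity
\[
(F(x) - F(y))(F^+(x) - F^+(y)) \ge (F^+(x) - F^+(y))^2 \ge 0 \qquad \text{for every } x,y \in \mathbb{R}^N,
\]
which together with the non-negativity and symmetry of $K$ guarantees that the nonlocal part of $\mathcal{M}$ has the right sign when tested against the positive part of a difference.

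\emph{Step 1:} First I will show $u_n \ge v_n$ in $\Omega$. Subtracting the weak formulations of \eqref{M0} and \eqref{M00} and testing with the non-negative function $\phi = (v_n - u_n)^+ \in W^{1,2}_0(\Omega)$, the local contribution is bounded below by $\alpha \|\nabla (v_n - u_n)^+\|_{L^2(\Omega)}^2$ thanks to \eqref{lkernel}, and the nonlocal contribution is non-negative by the identity above applied to $F = v_n - u_n$. The right-hand side equals
\[
\int_\Omega \left(\frac{T_n(f)}{(v_n + \tfrac{1}{n})^{\delta(x)}} - \frac{T_n(f)}{(u_n + \tfrac{1}{n})^{\delta(x)}}\right)\phi\, dx - \int_\Omega T_n(g)\, \phi\, dx,
\]
and both terms are $\le 0$ (the first by monotonicity of $s\mapsto s^{-\delta(x)}$ on $\{v_n > u_n\}$, the second since $T_n(g), \phi \ge 0$). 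Hence $\|\nabla (v_n - u_n)^+\|_{L^2(\Omega)} = 0$, which forces $(v_n - u_n)^+ \equiv 0$.

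\emph{Step 2:} By linearity of $\mathcal{M}$,
\[
\mathcal{M}(u_n - v_n - w_n) = \frac{T_n(f)}{(u_n + \tfrac{1}{n})^{\delta(x)}} - \frac{T_n(f)}{(v_n + \tfrac{1}{n})^{\delta(x)}} \le 0,
\]
the last inequality using Step 1. I test with $\psi = (u_n - v_n - w_n)^+ \in W^{1,2}_0(\Omega)$. Repeating the sign analysis of Step 1 (local part bounded below by $\alpha\|\nabla \psi\|_{L^2(\Omega)}^2$, nonlocal part non-negative), I conclude $\psi \equiv 0$, giving $u_n \le v_n + w_n$ in $\Omega$. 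Combined with the positivity $u_n > 0$ from Lemma \ref{LD11}, this yields $0 \le u_n \le v_n + w_n$.

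\emph{Step 3:} Setting $c = \min\{a,b\}$, boundedness of $\Omega$ and H\"older's inequality provide constants depending only on $|\Omega|$ such that
\[
\|u_n\|_{L^c(\Omega)} \le \|v_n\|_{L^c(\Omega)} + \|w_n\|_{L^c(\Omega)} \le C\bigl(\|v_n\|_{L^b(\Omega)} + \|w_n\|_{L^a(\Omega)}\bigr),
\]
and the hypothesized uniform bounds on $\{v_n\}$ in $L^b(\Omega)$ and $\{w_n\}$ in $L^a(\Omega)$ close the argument. The only technical point to be careful about is the sign of the nonlocal term under the testing by $(v_n - u_n)^+$ and $(u_n - v_n - w_n)^+$, but this is handled entirely by the pointwise algebraic identity above; no deeper obstacle is expected.
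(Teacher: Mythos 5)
Your proposal is correct and follows essentially the same route as the paper: establish the pointwise bound $u_n \le v_n + w_n$ by testing the equation for the difference with its positive part, use the sign of the local term via \eqref{lkernel} and the non-negativity of the nonlocal term via $(F(x)-F(y))(F^+(x)-F^+(y))\ge 0$, and then pass to $L^c$ bounds via the embeddings on the bounded domain. The only difference is that your Step~1 ($u_n\ge v_n$) is an extra, logically unnecessary preliminary: the paper observes directly that on the support of $(u_n - v_n - w_n)^+$ one has $u_n> v_n + w_n \ge v_n$ (since $w_n\ge 0$), which makes the right-hand side integrand non-positive without first proving a global ordering. Your version is a harmless, slightly longer variant of the same argument, and it is correct.
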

\begin{proof}
The function $\Tilde{u}_n:=(u_n-v_n-w_n)$ satisfies the following equation
\begin{align}\label{E1}
     \begin{cases}
        \mathcal{M} \Tilde{u}_n=T_n(f)\left (\frac{1}{(u_n+\frac{1}{n})^{\delta(x)}}-\frac{1}{(v_n+\frac{1}{n})^{\delta(x)}}\right ) \text{ in }\Omega,\\
    \Tilde{u}_n=0 \text{ in }\mathbb{R}^N\setminus\Omega.
   \end{cases}
\end{align}
By putting the test function $\phi=\Tilde{u}_n^+$ in the weak formulation of (\ref{E1}) and applying (\ref{lkernel}), (\ref{nkernel}), we obtain
\begin{align*}
    &\alpha\int_\Omega|\nabla \Tilde{u}_n^+|^2\, dx+\Lambda^{-1}\underbrace{\int_{\mathbb{R}^N}\int_{\mathbb{R}^N}\frac{(\Tilde{u}_n(x)-\Tilde{u_n}(y))(\Tilde{u}_n^+(x)-\Tilde{u}_n^+(y))}{|x-y|^{N+2s}}}_{\geq 0}\,dx dy\\
    &\leq \int_\Omega T_n(f)\left (\frac{1}{(u_n+\frac{1}{n})^{\delta(x)}}-\frac{1}{(v_n+\frac{1}{n})^{\delta(x)}}\right )\Tilde{u}_n^{+}\,dx\leq 0,
\end{align*}
which gives
$$\int_\Omega|\nabla \Tilde{u}_n^+|^2\, dx\leq 0,$$
that is $\Tilde{u}_n^+=0$ in $\Omega$. Therefore, $u_n\leq v_n+w_n$ in $\Omega$ and hence $\{u_n\}_{n\in\mathbb{N}}$ is uniformly bounded in $L^c(\Omega)$.
\end{proof}

The following regularity result for the above solution $w_n$ of the equation (\ref{M1}) is highly significant for our argument.  
\begin{Lemma}\label{EL2}
Assume that $g\in L^m(\Omega)$ be a non-negative function in $\Omega$ for some $m>1$. Then the above solution $w_n$ to the equation (\ref{M1})
satisfies the following conclusions:
\begin{enumerate}
    \item[(a)] If $m>\frac{N}{2}$, then $\{w_n\}_{n\in\mathbb{N}}$ is uniformly bounded in  $L^\infty(\Omega).$
    \item[(b)]  If $1<m<\frac{N}{2}$, then $\{w_n\}_{n\in\mathbb{N}}$ is uniformly bounded in $L^{m^{**}}(\Omega),$ where $m^{**}=\frac{Nm}{N-2m}$.
\end{enumerate}
\end{Lemma}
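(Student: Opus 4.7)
The strategy is a Stampacchia truncation argument in case $(a)$ and a Boccardo--Gallou\"et power-test argument in case $(b)$, applied to the weak formulation of \eqref{M1}. In both cases I would test against a monotone non-decreasing function $\Phi(w_n)$ of $w_n$; since $w_n\in W^{1,2}_0(\Omega)\cap L^\infty(\Omega)$ by \cite[Lemma 3.1]{Guna}, the test function is admissible, and because $\Phi$ is non-decreasing, the nonlocal quadratic form
\[
\int_{\mathbb{R}^N}\int_{\mathbb{R}^N}(w_n(x)-w_n(y))\bigl(\Phi(w_n)(x)-\Phi(w_n)(y)\bigr)K(x,y)\,dx\,dy
\]
is non-negative and can simply be discarded, reducing each estimate to the classical purely local argument.

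\emph{Case $(a)$: $m>N/2$.} I would take $\phi=G_k(w_n)$, $k>0$. Dropping the nonlocal term and invoking \eqref{lkernel} gives, with $A_k:=\{w_n>k\}$,
\[
\alpha\int_\Omega |\nabla G_k(w_n)|^2\,dx\le \int_{A_k} g\,G_k(w_n)\,dx.
\]
H\"older on $A_k$ together with the Sobolev embedding from Lemma \ref{emb} (note that $m>N/2$ is equivalent to $m'<2^*$) yields
\[
\|\nabla G_k(w_n)\|_{L^2(\Omega)}\le C\|g\|_{L^m(\Omega)}\,|A_k|^{\theta},\qquad \theta:=1-\tfrac{1}{m}-\tfrac{1}{2^*}.
\]
Combining with $(h-k)|A_h|^{1/2^*}\le \|G_k(w_n)\|_{L^{2^*}(\Omega)}\le C\|\nabla G_k(w_n)\|_{L^2(\Omega)}$ produces $|A_h|\le C(h-k)^{-2^*}|A_k|^{2^*\theta}$ for $h>k>0$. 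A direct computation shows $2^*\theta>1$ precisely when $m>N/2$, so Stampacchia's iteration lemma delivers $|A_{k_0}|=0$ for some $k_0$ depending only on $\|g\|_{L^m(\Omega)},N,\alpha,|\Omega|$, which gives the desired uniform $L^\infty$ bound.

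\emph{Case $(b)$: $1<m<N/2$.} I would choose $\sigma:=\tfrac{m(N-2)}{2(N-2m)}$, noting $\sigma>\tfrac12$, $\sigma\cdot 2^*=m^{**}$ and $(2\sigma-1)m'=\sigma\cdot 2^*$, and test with $\phi=(1+w_n)^{2\sigma-1}-1\in W^{1,2}_0(\Omega)\cap L^\infty(\Omega)$. After \eqref{lkernel} and the chain rule, the local term equals a positive multiple of $\int_\Omega\bigl|\nabla(1+w_n)^\sigma\bigr|^2\,dx$, while the right-hand side is bounded via H\"older by
\[
\|g\|_{L^m(\Omega)}\,\bigl\|(1+w_n)^{2\sigma-1}\bigr\|_{L^{m'}(\Omega)}=\|g\|_{L^m(\Omega)}\,\bigl\|(1+w_n)^\sigma\bigr\|_{L^{2^*}(\Omega)}^{(2\sigma-1)/\sigma}.
\]
Since $(2\sigma-1)/\sigma<2$, an application of Sobolev and Young's inequality absorbs the right-hand side into the left, giving $\|(1+w_n)^\sigma\|_{L^{2^*}(\Omega)}\le C\bigl(1+\|g\|_{L^m(\Omega)}^\sigma\bigr)$, i.e.\ $\|w_n\|_{L^{m^{**}}(\Omega)}\le C$ uniformly in $n$.

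\emph{Main obstacle.} The only non-routine step is justifying discarding the nonlocal term, which rests on the pointwise inequality $(w_n(x)-w_n(y))(\Phi(w_n)(x)-\Phi(w_n)(y))\ge 0$ for non-decreasing $\Phi$, combined with $K\ge 0$; both $G_k$ and $(1+\cdot)^{2\sigma-1}$ are of this form. Beyond this, the arithmetic boils down to verifying the two exponent identities $2^*\theta>1\Leftrightarrow m>N/2$ in $(a)$ and $(2\sigma-1)m'=\sigma\cdot 2^*$ in $(b)$, and checking that all constants depend only on $\|g\|_{L^m(\Omega)},\alpha,\Lambda,N,|\Omega|$, hence not on $n$.
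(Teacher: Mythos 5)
Your proof is correct and follows essentially the same route as the paper: part (a) is the identical Stampacchia iteration with $\phi=(w_n-k)^+$ (your $G_k(w_n)$ equals this here since $w_n\ge 0$), and part (b) tests with the same power of $w_n$ (your $2\sigma-1$ is exactly the paper's $\gamma=\frac{N(m-1)}{N-2m}$, with $(1+w_n)$ in place of $(w_n+\epsilon)$). The only cosmetic difference is that you close the estimate in (b) via Young's inequality absorption, whereas the paper directly compares the powers $2/2^*$ and $1/m'$; both are valid and equivalent.
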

\begin{proof}
\begin{enumerate}
\item[$(a)$] For $k\geq 1$, taking $\phi=(w_n-k)^+$ as a test function in weak formulation of the equation $(\ref{M1})$ and applying (\ref{lkernel}), (\ref{nkernel}), we deduce
\begin{align*}
\alpha\int_\Omega|\nabla\phi|^2\, dx+\Lambda^{-1}\underbrace{\int_{\mathbb{R}^N}\int_{\mathbb{R}^N}\frac{(w_n(x)-w_n(y))(\phi(x)-\phi(y))}{|x-y|^{N+2s}}\, dxdy}_{\geq 0}\leq \int_\Omega T_n(g)\phi\, dx,
\end{align*}
which implies
\begin{align}\label{ab}
    \int_\Omega |\nabla \phi|^2\, dx\leq C\int_\Omega g\phi\, dx,
\end{align}
for some constant $C>0$ independent of $n$. Using Lemma \ref{emb} and the generalized H\"{o}lder's inequality in the above inequality \eqref{ab}, we get
\begin{align*}
    C\left (\int_\Omega |\phi|^{2^*}\, dx\right )^\frac{2}{2^*}\leq \int_\Omega |\nabla\phi|^2\, dx\leq C\int_\Omega g|\phi|\, dx\\
    \leq C||g||_{L^m(\Omega)}||\phi||_{L^{2^*}(\Omega)}|\{x\in\Omega:w_n\geq k\}|^{1-\frac{1}{m}-\frac{1}{2^*}},
\end{align*}
which yields
\begin{align}
    \int_\Omega|\phi|^{2^*}\, dx\leq C||g||_{L^m(\Omega)}^{2^*}|\{x\in\Omega:w_n\geq k\}|^{2^*(1-\frac{1}{m}-\frac{1}{2^*})},
\end{align}
for some constant $C>0$ independent of $n$ {and $2^*=\frac{2N}{N-2}$}.
Define $S(k):=\{x\in\Omega: w_n(x)\geq k\}$ for all $k\geq 1.$ Now, for $1\leq k\leq h$, we observe that
\begin{align*}
    |S(h)|(h-k)^{2^*}=\int_{S(h)}|h-k|^{2^*}\,dx&\leq \int_{S(k)}|(w_n-k)^+|^{2^*}\,dx=\int_\Omega |\phi|^{2^*}\,dx\nonumber\\
    &\leq C||g||_{L^m(\Omega)}^{2^*}|\{x\in\Omega:w_n\geq k\}|^{2^*(1-\frac{1}{m}-\frac{1}{2^*})}\nonumber\\
    &=C||g||_{L^m(\Omega)}^{2^*}|S(k)|^{2^*(1-\frac{1}{m}-\frac{1}{2^*})},
\end{align*}
that is, 
\begin{align*}
    |S(h)|\leq \frac{C||g||_{L^m(\Omega)}^{2^*}}{(h-k)^{2^*}}|S(k)|^\alpha,
\end{align*}
where $\alpha=2^*(1-\frac{1}{m}-\frac{1}{2^*})>1$ as $m>\frac{N}{2}.$ Thus, by \cite[ Lemma B.1]{SK}, there exists $C>0$ (independent of $n$) such that $||w_n||_{L^\infty(\Omega)}\leq C$ in $\Omega$, for all $n\in\mathbb{N}$.

\item[$(b)$] For $\epsilon>0$, treating $\phi=(w_n+\epsilon)^\gamma-\epsilon^\gamma$ ($\gamma>0$ to be determined later) as a test function in the weak formulation of $(\ref{M1})$ and using (\ref{lkernel}), (\ref{nkernel}), we obtain
\begin{align*}
 \alpha \int_\Omega\Big|\nabla (w_n+\epsilon)^\frac{\gamma+1}{2}\Big|^2\, dx+\Lambda^{-1}\underbrace{\int_{\mathbb{R}^N}\int_{\mathbb{R}^N}\frac{(w_n(x)-w_n(y))(\phi(x)-\phi(y))}{|x-y|^{N+2s}}\, dxdy}_{\geq 0}\\\leq \int_\Omega T_n(g)\phi\, dx,
\end{align*}
which implies
\begin{align*}
    \int_\Omega\Big|\nabla (w_n+\epsilon)^\frac{\gamma+1}{2}\Big|^2\,dx\leq C \int_\Omega g(w_n+\epsilon)^{\gamma}\, dx\leq C||g||_{L^m(\Omega)}\left (\int_\Omega (w_n+\epsilon)^{m'\gamma}\,dx\right )^\frac{1}{m'},
\end{align*}
{for some constant $C>0$ independent of $n$.} Using Lemma \ref{emb}, we get
\begin{align}
    \left (\int_\Omega (w_n+\epsilon)^\frac{2^*(\gamma+1)}{2}\, dx\right )^\frac{2}{2^*}\leq C||g||_{L^m(\Omega)}\left (\int_\Omega (w_n+\epsilon)^{m'\gamma}\, dx\right )^\frac{1}{m'},
\end{align}
where $C>0$ is a constant independent of $n$ and {$2^*=\frac{2N}{N-2}$.} We choose $\gamma>0$ such that $\frac{2^*(\gamma+1)}{2}=m'\gamma$, i.e. $\gamma=\frac{N(m-1)}{(N-2m)}$. Therefore, $m'\gamma=\frac{Nm}{N-2m}=m^{**}$. Since $1<m<\frac{N}{2}$, one has
\begin{align*}
     \int_\Omega (w_n+\epsilon)^{m^{**}}\, dx\leq C,
\end{align*}
where $C>0$ is a constant independent of $n$.
Finally, by Fatou's Lemma, the result follows.
\end{enumerate}
\end{proof}
\begin{Lemma}\label{EL5}
    Suppose $\delta:\overline{\Omega}\to(0,\infty)$ is a continuous function satisfying $(P_{\epsilon,\delta_*})$ for some $\delta_*\geq 1$ and for some $\epsilon>0$. Let $f\in L^r(\Omega)\setminus\{0\}$ be a non-negative function in $\Omega$ for some {$r\geq 1$}. For each $n\in\mathbb{N}$, let $v_n$ be the unique weak solution of the problem (\ref{M0}). Then the following conclusions hold:
\begin{enumerate}
    \item[(a)] If $r>\frac{N}{2}$, then the sequence $\{v_n\}_{n\in\mathbb{N}}$ is uniformly bounded in $L^\infty(\Omega).$
    \item[(b)] If ${\frac{N(\delta_*+1)}{N+2\delta_*}}\leq r<\frac{N}{2}$, then the sequence $\{v_n\}_{n\in\mathbb{N}}$ is uniformly bounded in $L^{r^{**}}(\Omega),$
    where $r^{**}=\frac{Nr}{N-2r}$.
\end{enumerate}
\end{Lemma}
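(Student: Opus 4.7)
\textbf{Proof proposal for Lemma \ref{EL5}.} My plan is to follow the template of Lemma \ref{EL2}, but to absorb the singular right-hand side $T_n(f)/(v_n+\tfrac{1}{n})^{\delta(x)}$ into a manageable form by exploiting two pieces of information: the uniform positivity of $v_n$ away from $\partial\Omega$ (Lemma \ref{LD11}) and the condition $(P_{\epsilon,\delta_*})$ near $\partial\Omega$. In both parts, the ellipticity constant $\alpha$ in \eqref{lkernel} plus the nonnegativity of the nonlocal term (which I throw away, as in the proof of Lemma \ref{EL2}) lets me reduce the inequality to one involving only the local gradient norm and $f$.

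For part (a), the idea is Stampacchia's iteration with the test function $\phi=(v_n-k)^+$ for $k\geq 1$. The critical observation is that on the set $\{v_n\geq k\}$ we have $v_n+\tfrac{1}{n}\geq 1$, so $(v_n+\tfrac{1}{n})^{\delta(x)}\geq 1$ because $\delta(x)>0$; consequently the singular source is majorized pointwise by $f$. From here the argument is literally the one used for Lemma \ref{EL2}(a): using \eqref{lkernel}, \eqref{nkernel}, Lemma \ref{emb}, and generalized H\"older, I obtain
\begin{equation*}
|S(h)|\leq \frac{C\|f\|_{L^r(\Omega)}^{2^{*}}}{(h-k)^{2^{*}}}|S(k)|^{\beta}, \qquad \beta=2^{*}\Bigl(1-\tfrac{1}{r}-\tfrac{1}{2^{*}}\Bigr)>1,
\end{equation*}
where $S(k)=\{v_n\geq k\}$ and $\beta>1$ precisely because $r>N/2$. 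Then \cite[Lemma B.1]{SK} gives a uniform $L^\infty$ bound.

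For part (b), I will test against $\phi=(v_n+\epsilon)^{\gamma}-\epsilon^{\gamma}$ with $0<\epsilon<1/n$ and
\begin{equation*}
\gamma=\frac{N(r-1)}{N-2r}, \qquad \text{so that } r'\gamma=r^{**} \text{ and } \tfrac{2^{*}(\gamma+1)}{2}=r^{**}.
\end{equation*}
The hypothesis $r\geq \tfrac{N(\delta_*+1)}{N+2\delta_*}$ is exactly equivalent to $\gamma\geq \delta_*$, which is what I need to prevent $(v_n+\epsilon)^{\gamma-\delta(x)}$ from blowing up. Using $v_n+\tfrac{1}{n}\geq v_n+\epsilon$ and \eqref{lkernel}, the local part yields
\begin{equation*}
\int_\Omega\bigl|\nabla (v_n+\epsilon)^{\frac{\gamma+1}{2}}\bigr|^{2}\,dx\leq C\int_\Omega f\,(v_n+\epsilon)^{\gamma-\delta(x)}\,dx.
\end{equation*}
I split the right-hand side over the three regions used in Lemma \ref{LD2}: on $\Omega\setminus\Omega_\epsilon$ the uniform lower bound $v_n\geq C(\Omega\setminus\Omega_\epsilon)>0$ makes $(v_n+\epsilon)^{\gamma-\delta(x)}\leq C_1(v_n+\epsilon)^{\gamma}$; on $\Omega_\epsilon\cap\{v_n\geq 1\}$ the fact that $v_n+\epsilon\geq 1$ together with $\gamma\geq\delta_*\geq \delta(x)$ forces $(v_n+\epsilon)^{\gamma-\delta(x)}\leq (v_n+\epsilon)^{\gamma}$; and on $\Omega_\epsilon\cap\{v_n<1\}$ we have $(v_n+\epsilon)^{\gamma-\delta(x)}\leq 2^{|\gamma-\delta(x)|}\leq C_2$. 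Combining these, applying Lemma \ref{emb} and H\"older's inequality with $f\in L^{r}(\Omega)$, I reach
\begin{equation*}
\Bigl(\int_\Omega(v_n+\epsilon)^{r^{**}}\,dx\Bigr)^{2/2^{*}}\leq C\Bigl(1+\|f\|_{L^r(\Omega)}\Bigl(\int_\Omega(v_n+\epsilon)^{r^{**}}\,dx\Bigr)^{1/r'}\Bigr),
\end{equation*}
and since $r<N/2$ gives $2/2^{*}>1/r'$, this self-bootstraps to a uniform bound on $\|v_n+\epsilon\|_{L^{r^{**}}(\Omega)}$. Fatou's lemma (letting $\epsilon\to 0$) finishes the proof.

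The main obstacle is clearly the variable exponent $\delta(x)$: if $\delta$ were constant the choice of $\gamma$ and the estimate would be immediate, but here $(v_n+\epsilon)^{\gamma-\delta(x)}$ is only controllable after carefully partitioning $\Omega$ using both the uniform positivity of $v_n$ on compact subsets (which needs Lemma \ref{LD11}) and the condition $(P_{\epsilon,\delta_*})$ near $\partial\Omega$. The precise algebraic numerology that matches the lower bound on $r$ with the requirement $\gamma\geq \delta_*$ is what drives the hypothesis $r\geq N(\delta_*+1)/(N+2\delta_*)$ in part (b).
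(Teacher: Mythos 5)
Your proposal is correct and follows essentially the same route as the paper: part (a) uses $\phi=(v_n-k)^+$ and absorbs the singular term via $(v_n+\tfrac1n)^{\delta(x)}\geq 1$ on $\{v_n\geq k\}$, then applies Stampacchia iteration as in Lemma \ref{EL2}(a); part (b) tests against a power of $v_n$ with $\gamma=\tfrac{N(r-1)}{N-2r}$, splits $\Omega$ into $\Omega\setminus\Omega_\epsilon$, $\Omega_\epsilon\cap\{v_n\geq 1\}$, $\Omega_\epsilon\cap\{v_n<1\}$ using Lemma \ref{LD11} and $(P_{\epsilon,\delta_*})$, and closes with H\"older, Lemma \ref{emb}, and the numerology $r\geq\tfrac{N(\delta_*+1)}{N+2\delta_*}\Leftrightarrow\gamma\geq\delta_*$. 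The only cosmetic difference is your choice of $(v_n+\epsilon)^\gamma-\epsilon^\gamma$ with a terminal $\epsilon\to 0$ step, whereas the paper uses $v_n^\gamma$ directly (valid since $v_n\in W_0^{1,2}\cap L^\infty$ and $\gamma\geq\delta_*\geq 1$), making the Fatou passage unnecessary.
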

\begin{proof}
\begin{enumerate}
\item[$(a)$] For $k\geq 1$, incorporating the test function $\phi=(v_n-k)^+$ in the weak formulation of the equation (\ref{M0}) and using (\ref{lkernel}), (\ref{nkernel}), we obtain
\begin{align}\label{M6}
\alpha\int_\Omega |\nabla \phi|^2\, dx+\Lambda^{-1}\underbrace{\int_{\mathbb{R}^N}\int_{\mathbb{R}^N}\frac{(v_n(x)-v_n(y))(\phi(x)-\phi(y))}{|x-y|^{N+2s}}\, dxdy}_{\geq 0}\leq\int_\Omega \frac{\phi T_n(f)}{(v_n+\frac{1}{n})^{\delta(x)}}\,dx.
\end{align}
Since $v_n\geq k\geq 1$ on $\mathrm{supp}\,\phi$, we have $\big|\frac{\phi}{(v_n+\frac{1}{n})^{\delta(x)}}\big|\leq \phi$ {on $\mathrm{supp}\,\phi$}. Thus, (\ref{M6}) yields 
\begin{align}
    \int_\Omega |\nabla\phi|^2\,dx\leq C\int_\Omega \phi f\,dx,
\end{align}
{for some constant $C>0$ independent of $n$.}
    Rest of the proof follows in a similar way to the Lemma \ref{EL2}-$(a)$.
    
\item[$(b)$] Since $v_n\in L^\infty(\Omega)$, we choose the test function $\phi=v_n^\gamma$ ($\gamma\geq \delta_*$ to be determined later) in the weak formulation of (\ref{M0}) and apply the properties (\ref{lkernel}), (\ref{nkernel}) to get
\begin{align}\label{M7}
 \alpha\int_\Omega \Big|\nabla v_n^\frac{\gamma+1}{2}\Big|^2\, dx+\Lambda^{-1}\underbrace{\int_{\mathbb{R}^N}\int_{\mathbb{R}^N} \frac{(v_n(x)-v_n(y))(\phi(x)-\phi(y))}{|x-y|^{N+2s}}\, dxdy}_{\geq 0}\leq \int_\Omega \frac{\phi T_n(f)}{(v_n+\frac{1}{n})^{\delta(x)}}\, dx.  
\end{align}
Now, using the condition $(P_{\epsilon,\delta_*})$, we get
\begin{align}\label{M8}
\int_\Omega \frac{\phi T_n(f)}{(v_n+\frac{1}{n})^{\delta(x)}}\, dx&\leq \int_{\Omega\cap\Omega_\epsilon^c}\frac{v_n^\gamma f}{(v_n+\frac{1}{n})^{\delta(x)}}\, dx+\int_{\{x\in\Omega_\epsilon: v_n\leq 1\}} \frac{v_n^\gamma f}{(v_n+\frac{1}{n})^{\delta(x)}}\, dx\nonumber\\
&+\int_{\{x\in\Omega_\epsilon: v_n>1\}} \frac{v_n^\gamma f}{(v_n+\frac{1}{n})^{\delta(x)}}\, dx\nonumber \\
&\leq \Big\|\frac{1}{C(\epsilon)^{\delta(x)}}\Big\|_{L^\infty(\Omega)}\int_\Omega v_n^\gamma f\, dx+\int_{\{x\in\Omega_\epsilon: v_n\leq 1\}} v_n^{\gamma-\delta(x)} f\, dx\nonumber\\
&+\int_{\{x\in\Omega_\epsilon: v_n>1\}} v_n^\gamma f\, dx\nonumber\\
&\leq C\left (\int_\Omega v_n^\gamma f\, dx+\int_{\{x\in\Omega_\epsilon: v_n\leq 1\}} f\, dx+\int_{\{x\in\Omega_\epsilon: v_n>1\}} v_n^\gamma f\, dx\right )\nonumber\\
&\leq C\left (\int_\Omega v_n^\gamma f\, dx+||f||_{L^r(\Omega)}\right )\nonumber\\
&\leq C||f||_{L^r(\Omega)}\left [ 1+\left (\int_\Omega |v_n|^{r'\gamma}\, dx\right )^\frac{1}{r'} \right],
\end{align}
for some constant $C>0$ independent of $n$. Combining (\ref{M7}) and (\ref{M8}) we deduce
 \begin{align*}
     \int_\Omega\Big|\nabla v_n^\frac{\gamma+1}{2}\Big|^2\, dx\leq C||f||_{L^r(\Omega)}\left [ 1+\left (\int_\Omega |v_n|^{r'\gamma}\, dx\right )^\frac{1}{r'} \right],
 \end{align*}
 for some constant $C>0$ independent of $n$. By Lemma \ref{emb}, we obtain
   \begin{align}\label{M9}
    \left (\int_\Omega |v_n|^\frac{2^*(\gamma+1)}{2}\, dx\right )^\frac{2}{2^*}\leq C||f||_{L^r(\Omega)}\left [ 1+\left (\int_\Omega |v_n|^{r'\gamma}\, dx\right )^\frac{1}{r'} \right],
\end{align}
for some constant $C>0$ independent of $n$. {Here $2^*=\frac{2N}{N-2}$.} We choose $\gamma$ such that $\frac{2^*(\gamma+1)}{2}=r'\gamma$, i.e., $\gamma=\frac{N(r-1)}{(N-2r)}$. Since $\frac{N(\delta_*+1)}{N+2\delta_*}\leq r<\frac{N}{2}$, we have $\gamma\ge\delta_*$ and $\frac{1}{r'}<\frac{2}{2^*}$. Thus inequality (\ref{M9}) gives that $\{v_n\}_{n\in\mathbb{N}}$ is uniformly bounded in $L^{r^{**}}(\Omega)$, where $r^{**}=r'\gamma=\frac{Nr}{N-2r}.$
    \end{enumerate}
\end{proof}
For Lemma \ref{EL4} and Lemma \ref{EL3} below, we assume that $\delta:\overline{\Omega}\to(0,\infty)$ is a constant function.
\begin{Lemma}\label{EL4}
    Let $0<\delta<1$ and suppose that $f\in L^r(\Omega)\setminus\{0\}$ is a non-negative function in $\Omega$ for some $r\geq 1$. For each fixed $n\in\mathbb{N}$, let $v_n$ be the unique weak solution of the problem (\ref{M0}). Then the following conclusions hold:
\begin{enumerate}
    \item[$(a)$] If $r>\frac{N}{2}$, then the sequence $\{v_n\}_{n\in\mathbb{N}}$ is uniformly bounded in $L^\infty(\Omega).$
    \item[$(b)$] If $\left (\frac{2^*}{1-\delta}\right )'\leq r<\frac{N}{2}$, then the sequence $\{v_n\}_{n\in\mathbb{N}}$ is uniformly bounded in $L^s(\Omega),$
    where $s=\frac{Nr(\delta+1)}{N-2r}$.
\end{enumerate}
\end{Lemma}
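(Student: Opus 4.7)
For $r > N/2$ I would mimic the Stampacchia truncation used in Lemma \ref{EL5}(a). Test \eqref{M0} with $\phi = (v_n - k)^+$ for $k \ge 1$; on $\mathrm{supp}\,\phi$ one has $v_n \ge 1$, so $(v_n + 1/n)^{-\delta} \le 1$ (here only $\delta > 0$ is used, not $\delta < 1$). Since the nonlocal bilinear form evaluated on $v_n$ and the nondecreasing truncation $\phi$ is nonnegative, \eqref{lkernel} yields
\[
\int_\Omega |\nabla \phi|^2 \, dx \le C \int_\Omega f\, \phi\, dx.
\]
Sobolev embedding and H\"older with $f \in L^r$ then yield $|\{v_n \ge h\}|(h-k)^{2^*} \le C\|f\|_{L^r}^{2^*} |\{v_n \ge k\}|^{\theta}$ with $\theta = 2^*(1 - 1/r - 1/2^*)$; a direct check shows $\theta > 1$ iff $r > N/2$, so \cite[Lemma B.1]{SK} produces the uniform $L^\infty$ bound.

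\textbf{Plan for (b).} I would run a single-step Moser/Boccardo--Orsina iteration with a tuned power. For $0 < \epsilon < 1/n$, take the admissible test function $\phi = (v_n + \epsilon)^\gamma - \epsilon^\gamma \in W^{1,2}_0(\Omega)$ with $\gamma > 0$ to be fixed. Using \eqref{lkernel}, dropping the nonnegative nonlocal contribution, and exploiting both $(v_n + 1/n)^{-\delta} \le (v_n + \epsilon)^{-\delta}$ and $(v_n+\epsilon)^\gamma - \epsilon^\gamma \le (v_n+\epsilon)^\gamma$, I obtain
\[
\int_\Omega \bigl|\nabla(v_n+\epsilon)^{(\gamma+1)/2}\bigr|^2\, dx \le C\int_\Omega f\,(v_n+\epsilon)^{\gamma-\delta}\, dx.
\]
Applying Lemma \ref{emb} on the left and H\"older with $f \in L^r$ on the right gives
\[
\left(\int_\Omega (v_n+\epsilon)^{2^*(\gamma+1)/2}\, dx\right)^{2/2^*} \le C\|f\|_{L^r}\left(\int_\Omega (v_n+\epsilon)^{r'(\gamma-\delta)}\, dx\right)^{1/r'}.
\]
The natural choice is to pick $\gamma$ so that $2^*(\gamma+1)/2 = r'(\gamma-\delta) = s$; solving yields $\gamma = (r'\delta + 2^*/2)/(r' - 2^*/2)$ and $s = Nr(\delta+1)/(N-2r)$. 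The hypothesis $r \ge (2^*/(1-\delta))'$ is precisely equivalent to $\gamma \ge 1$ (equivalently $s \ge 2^*$), while $r < N/2$ gives $2/2^* > 1/r'$, so the inequality can be absorbed to produce $\int_\Omega (v_n+\epsilon)^s\, dx \le C$ uniformly in $n$ and $\epsilon$. A final Fatou passage as $\epsilon \to 0^+$ delivers the claimed uniform $L^s$ bound on $\{v_n\}_{n\in\mathbb{N}}$.

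\textbf{Where the real work lives.} Part (a) is essentially routine given the preceding lemmas. The substance of (b) lies in the exponent bookkeeping: pinning down the unique $\gamma$ that forces the Sobolev and H\"older powers to match, and recognizing that the lower threshold $r = (2^*/(1-\delta))'$ is dictated by the requirement $\gamma \ge 1$ (making $s \ge 2^*$ and the test function benign), rather than by any absorption condition---the latter needs only $r < N/2$. Once $\gamma$ is computed explicitly, all estimates are manifestly uniform in $n$ and $\epsilon$, so the Fatou limit is immediate.
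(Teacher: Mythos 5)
Your proposal is correct and follows essentially the same route as the paper. For part (a) both of you invoke the Stampacchia truncation from Lemma~\ref{EL5}. For part (b) the underlying computation is identical: the paper tests with $v_n^{2\gamma_P-1}$ and arrives at $\left(\int_\Omega v_n^{2^*\gamma_P}\right)^{2/2^*}\le C\|f\|_{L^r}\left(\int_\Omega v_n^{r'(2\gamma_P-\delta-1)}\right)^{1/r'}$, picking $\gamma_P=\frac{r(\delta+1)(N-2)}{2(N-2r)}$; your test function $(v_n+\epsilon)^{\gamma}-\epsilon^{\gamma}$ with $\gamma=2\gamma_P-1$ reproduces this line for line (one checks that your $\gamma=(r'\delta+2^*/2)/(r'-2^*/2)$ equals $2\gamma_P-1$, and that $\gamma\ge 1$ iff $r\ge(2^*/(1-\delta))'$). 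The one presentational difference is that the paper treats the endpoint $r=(2^*/(1-\delta))'$ as a separate sub-case (testing with $v_n$ itself to get a uniform $W^{1,2}_0(\Omega)$, hence $L^{2^*}(\Omega)$, bound) before running the power-test for the strict inequality $r>(2^*/(1-\delta))'$, whereas your parametrized test function degenerates to $v_n$ at $\gamma=1$ and so handles the closed range in one pass; this is a mild streamlining, not a new method. Your remark that the threshold $r=(2^*/(1-\delta))'$ is forced by $\gamma\ge1$ (equivalently $s\ge 2^*$) rather than by the absorption condition $r<N/2$ is accurate and is implicit in the paper's case split.
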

\begin{proof}
\begin{enumerate}
\item[$(a)$] The proof follows from Lemma \ref{EL5}.

\item[$(b)$] To begin with, we consider the case $r=\left (\frac{2^*}{1-\delta}\right )'$. By incorporating $v_n$ as a test function in the weak formulation in (\ref{M0}) and applying (\ref{lkernel}), (\ref{nkernel}) to deduce
$$\alpha\int_\Omega |\nabla v_n|^2\, dx\leq \int_\Omega fv_n^{1-\delta}\, dx\leq ||f||_{L^r(\Omega)}||v_n||_{L^{2^*}(\Omega)}^{1-\delta}\leq ||f||_{L^r(\Omega)}||v_n||_{{W_0^{1,2}(\Omega)}}^{1-\delta}, $$
which yields $\{v_n\}_{n\in\mathbb{N}}$ is uniformly bounded in $W^{1,2}_0(\Omega).$
Thus,  by Lemma \ref{emb}, $\{v_n\}_{n\in\mathbb{N}}$ is uniformly bounded in $L^{2^*}(\Omega)$. This proves the conclusion for $r=\left (\frac{2^*}{1-\delta}\right )'$. For the case $\left (\frac{2^*}{1-\delta}\right )'< r<\frac{N}{2}$, we put $v_n^{2\gamma-1}$ ($\gamma>1$ to be determined later) as a test function in the weak formulation of the equation (\ref{M0}) and obtain
\begin{align*}
\int_\Omega |\nabla v_n^\gamma|^2\, dx\leq C\int_\Omega v_n^{2\gamma-\delta-1}f\, dx\leq C||f||_{L^r(\Omega)}\left (\int_\Omega v_n^{r'(2\gamma-\delta-1)}\, dx\right )^\frac{1}{r'},
\end{align*}
for some constant $C>0$ independent of $n$.
By Lemma \ref{emb}, we infer that
\begin{align}\label{M5}
     \left (\int_\Omega v_n^{2^*\gamma}\, dx\right )^\frac{2}{2^*}\leq C||f||_{L^r(\Omega)}\left (\int_\Omega v_n^{r'(2\gamma-\delta-1)}\, dx\right )^\frac{1}{r'},
\end{align}
{for some constant $C>0$ independent of $n$.} {Here $2^*=\frac{2N}{N-2}$.}
 We choose $\gamma$ in such a way that $2^*\gamma=r'(2\gamma-\delta-1)$, i.e., $\gamma=\frac{r(\delta+1)(N-2)}{2(N-2r)}>1$ \big(since $r>\left (\frac{2^*}{1-\delta}\right)'$\big). From the fact $r<\frac{N}{2}$ and (\ref{M5}) we can conclude that $\{v_n\}_{n\in\mathbb{N}}$ is bounded in $L^s(\Omega)$ with $s=\frac{Nr(\delta+1)}{(N-2r)}.$
 \end{enumerate}
\end{proof}

\begin{Lemma}\label{EL3}
    Let $\delta\geq 1$ and suppose that $f\in L^r(\Omega)\setminus\{0\}$ is a non-negative function in $\Omega$ for some $r\geq 1$. For each fixed $n\in\mathbb{N}$, let $v_n$ be unique weak solution of the problem (\ref{M0}). Then the following conclusions hold:
\begin{enumerate}
    \item[$(a)$] If $r>\frac{N}{2}$, then the sequence $\{v_n\}_{n\in\mathbb{N}}$ is uniformly bounded in $L^\infty(\Omega).$
    \item[$(b)$] If $1\leq r<\frac{N}{2}$, then the sequence $\{v_n\}_{n\in\mathbb{N}}$ is uniformly bounded in $L^s(\Omega),$
    where $s=\frac{Nr(\delta+1)}{N-2r}$.
\end{enumerate}
\end{Lemma}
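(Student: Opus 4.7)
I split the proof by the two cases in the statement. For part (a), since any constant $\delta \geq 1$ trivially satisfies the condition $(P_{\epsilon,\delta_*})$ with $\delta_* = \delta$ for every $\epsilon > 0$, the conclusion is an immediate consequence of Lemma \ref{EL5}(a) applied to this special case.

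For part (b), the constancy of $\delta$ removes the need for the domain splitting between $\Omega_\epsilon$ and $\Omega \setminus \Omega_\epsilon$ that was used in the variable-exponent setting of Lemma \ref{EL5}, and this is precisely what will allow the lower bound on $r$ to drop all the way to $r = 1$. I plan to test the weak formulation of \eqref{M0} with $\phi = v_n^{\gamma}$, which is admissible because $v_n \in W^{1,2}_0(\Omega) \cap L^{\infty}(\Omega)$. The ellipticity \eqref{lkernel}, the monotonicity of $t \mapsto t^{\gamma}$ on $[0,\infty)$ (which makes the nonlocal contribution non-negative), together with the elementary inequality $v_n^{\gamma}\,(v_n + \tfrac{1}{n})^{-\delta} \leq v_n^{\gamma-\delta}$ then produce
\begin{align*}
\frac{4\alpha\gamma}{(\gamma+1)^2}\int_{\Omega} \bigl|\nabla v_n^{(\gamma+1)/2}\bigr|^2 \, dx \;\leq\; \int_{\Omega} f\, v_n^{\gamma-\delta} \, dx.
\end{align*}

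I then select $\gamma$ so that Sobolev on the left and Hölder with exponent $r$ on the right produce the \emph{same} power of $v_n$ under the integral, namely so that $\frac{2^{*}(\gamma+1)}{2} = r'(\gamma - \delta) = s$, where $s = \frac{Nr(\delta+1)}{N-2r}$. A direct computation gives the unique choice
\begin{align*}
\gamma \;=\; \frac{r(N-2)\delta + N(r-1)}{N-2r},
\end{align*}
for which $\gamma - \delta = \frac{N(r-1)(\delta+1)}{N-2r} \geq 0$ throughout the range $1 \leq r < N/2$, and $\gamma + 1 = \frac{r(N-2)(\delta+1)}{N-2r}$. Applying Lemma \ref{emb} to $v_n^{(\gamma+1)/2}$ on the left and Hölder's inequality with exponents $r,r'$ on the right yields
\begin{align*}
\Bigl(\int_{\Omega} v_n^{s} \, dx\Bigr)^{2/2^{*}} \;\leq\; C\,\|f\|_{L^{r}(\Omega)} \Bigl(\int_{\Omega} v_n^{s} \, dx\Bigr)^{1/r'},
\end{align*}
and the hypothesis $r < N/2$ is equivalent to $\frac{2}{2^{*}} > \frac{1}{r'}$, which lets me absorb the right-hand factor into the left and conclude a uniform $L^{s}$ bound. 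The endpoint $r=1$ is covered by the same formula: it gives $\gamma = \delta$, so $v_n^{\gamma-\delta} \equiv 1$ and the Hölder step collapses into the direct estimate $\int f \, v_n^{\gamma-\delta} = \|f\|_{L^{1}(\Omega)}$, after which Sobolev applied to $v_n^{(\delta+1)/2}$ delivers the uniform bound in $L^{N(\delta+1)/(N-2)}(\Omega) = L^{s}(\Omega)$.

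The main technical burden is purely algebraic: verifying that the single choice of $\gamma$ above simultaneously fulfils $\gamma \geq \delta$ (so the pointwise inequality used to handle the singular nonlinearity is valid) and makes the Sobolev and Hölder exponents coincide at $s$. No new analytic ingredient beyond those already developed in Lemmas \ref{EL2}, \ref{EL4} and \ref{EL5} is required.
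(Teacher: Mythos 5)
Your proof is correct and follows essentially the same strategy as the paper: test the weak formulation of \eqref{M0} with a power of $v_n$, use the pointwise bound $(v_n+\tfrac{1}{n})^{-\delta}\le v_n^{-\delta}$ together with $\gamma\ge\delta$, then close via H\"older and Sobolev with the exponent tuned so both sides carry the same power $s$ of $v_n$. Your test function $v_n^{\gamma}$ with $\gamma=\frac{r(N-2)\delta+N(r-1)}{N-2r}$ is identical to the paper's $v_n^{2\gamma'-1}$ with $\gamma'=\frac{r(\delta+1)(N-2)}{2(N-2r)}$ after the relabelling $\gamma=2\gamma'-1$; the only difference is a streamlining, namely that you note this single formula uniformly covers $\delta=1$ and $\delta>1$ as well as the endpoint $r=1$ (where $\gamma=\delta$ and the H\"older step degenerates to a plain $L^1$ bound), whereas the paper treats these as separate cases.
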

\begin{proof}
\begin{enumerate}
\item[$(a)$] The proof follows from Lemma \ref{EL5}.

\item[$(b)$] We divide the proof into two cases $\delta=1$ and $\delta>1.$\\
\textbf{Case-I:} Suppose $\delta=1$. {If $r=1$, then by taking into account Lemma \ref{emb}, the conclusion follows by choosing test function $v_n$ in the weak formulation of \eqref{M0} and using \eqref{lkernel}, \eqref{nkernel}}. Let us assume that $r>1$. By incorporating the test function $\phi=v_n^{2\gamma-1}$ ($\gamma>1$ to be determined later) in the weak formulation of (\ref{M0}) and utilizing (\ref{lkernel}), (\ref{nkernel}), we get
\begin{align*}
 \alpha\int_\Omega|\nabla v_n^\gamma|^2\, dx+\Lambda^{-1}\underbrace{\int_{\mathbb{R}^N}\int_{\mathbb{R}^N} \frac{(v_n(x)-v_n(y))(\phi(x)-\phi(y))}{|x-y|^{N+2s}}\, dxdy}_{\geq 0}\leq\int_\Omega \frac{\phi T_n(f)}{(v_n+\frac{1}{n})}\, dx,
\end{align*}
which reveals that
\begin{align*}
\int_\Omega |\nabla v_n^\gamma|^2\, dx\leq C\int_\Omega v_n^{2\gamma-2}f\, dx\leq C||f||_{L^r(\Omega)}\left (\int_\Omega v_n^{2r'(\gamma-1)}\, dx\right )^\frac{1}{r'}
\end{align*}
{for some constant $C>0$ independent of $n$.} Applying Lemma \ref{emb} in the above estimate, one has
\begin{align}\label{M3}
    \left (\int_\Omega v_n^{2^*\gamma}\, dx\right )^\frac{2}{2^*}\leq C||f||_{L^r(\Omega)}\left (\int_\Omega v_n^{2r'(\gamma-1)}\, dx\right )^\frac{1}{r'},
\end{align}
for some constant $C>0$ independent of $n$. {Here $2^*=\frac{2N}{N-2}$.}
    We choose $\gamma$ such that $2^*\gamma=2r'(\gamma-1)$, i.e., $\gamma=\frac{r(N-2)}{N-2r}>1$. Using $r<\frac{N}{2}$, the inequality (\ref{M3}) gives that $\{v_n\}_{n\in\mathbb{N}}$ is bounded in $L^s(\Omega)$, where $s=\frac{2Nr}{N-2r}.$\\
\textbf{ Case-II:} We suppose  $\delta>1$. When $r=1$, by choosing $v_n^\delta$ as a test function in the weak formulation of (\ref{M0}) and using the properties (\ref{lkernel}), (\ref{nkernel}), we conclude that $\{v_n^\frac{\delta+1}{2}\}_{n\in\mathbb{N}}$ is uniformly bounded in {$W^{1,2}_0(\Omega)$.} Then by Lemma \ref{emb}, the result follows for $r=1.$ Therefore, let us assume $1<r<\frac{N}{2}$. Treating $v_n^{2\gamma-1}$ ($\gamma>\frac{\delta+1}{2}$ to be determined later) as a test function in the weak formulation of the equation (\ref{M0}), we deduce
\begin{align*}
\int_\Omega |\nabla v_n^\gamma|^2\, dx\leq C\int_\Omega v_n^{2\gamma-\delta-1}f\, dx\leq C||f||_{L^r(\Omega)}\left (\int_\Omega v_n^{r'(2\gamma-\delta-1)}\, dx\right )^\frac{1}{r'},
\end{align*}
for some constant $C>0$ independent of $n$.
Applying Lemma \ref{emb} in the above estimate, we get
\begin{align}\label{M4}
     \left (\int_\Omega v_n^{2^*\gamma}\, dx\right )^\frac{2}{2^*}\leq C||f||_{L^r(\Omega)}\left (\int_\Omega v_n^{r'(2\gamma-\delta-1)}\, dx\right )^\frac{1}{r'},
\end{align}
for some constant $C>0$ independent of $n$ and {$2^*=\frac{2N}{N-2}$.}
 We choose $\gamma$ in such a way that $2^*\gamma=r'(2\gamma-\delta-1)$, i.e., $\gamma=\frac{r(\delta+1)(N-2)}{2(N-2r)}>\frac{\delta+1}{2}$ (since $r>1$). From the fact $r<\frac{N}{2}$ and (\ref{M4}) we conclude that $\{v_n\}_{n\in\mathbb{N}}$ is bounded in $L^s(\Omega)$ with $s=\frac{Nr(\delta+1)}{(N-2r)}.$ This completes the proof.
 \end{enumerate}
\end{proof}

\section*{Acknowledgement} P. Garain thanks IISER Berhampur for the Seed grant: IISERBPR/RD/OO/2024/15, Date: February 08, 2024.

\medskip

\medskip

\end{document}